\documentclass[reqno]{amsart}

\usepackage{style}

\author{Mireille Boutin} \address{Department of Mathematics and
  Computer Science, Eindhoven University of Technology, 5600 MB
  Eindhoven, The Netherlands}
\email{m.boutin@tue.nl}%
\author{Gregor Kemper} \address{Technical University of Munich,
  Germany; TUM School of Computation, Information and Technology,
  Department of Mathematics} \email{kemper@tum.de}

\title{Localization from Pseudoranges: Quadrics and Duality}

\date{May 27, 2026}

\begin{document}

\begin{abstract}
  This paper gives a complete description of the solutions of the
  global positioning problem, emphasizing the under-determined case. We
  show that the solutions form a quadric, which may degenerate in
  various ways. Perhaps more surprisingly, the satellite positions
  also lie on a quadric, and these two quadrics exhibit a remarkable
  duality: They live on perpendicular affine spaces but share the same
  axis of symmetry. Moreover, the vertices of one quadric are the foci
  of the other and vice versa.

  The results of this paper are not only applicable to the global
  positioning problem, but to a wider class of problems known as pseudorange-multilateration. This includes a range of real-world localization problems
  where a signal is emitted at an unknown emission
  time, and received by sensors at known positions. In particular, the paper can be useful for solving an under-determined multilateration problem in the presence of additional constraints. We illustrate this with two examples: locating a cleaning robot on the ground and locating a raft on the ocean.
\end{abstract}

\maketitle

\newcommand{\ve}[1]{\mathbf{#1}}%
\newcommand{\vh}[1]{\widehat{\ve{#1}}}%
\newcommand{\Sol}{Q_\mathrm{sol}}
\newcommand{\Sat}{Q_\mathrm{sat}}
\newcommand{\Asol}{\mathcal A_\mathrm{sol}}
\newcommand{\Asat}{\mathcal A_\mathrm{sat}}

\section*{Introduction} \label{sIntro}%
Geolocation is the process of determining the physical position of a user or an event~[\citenumber{gentile2012geolocation}]. This task arises in a wide range of applications. In logistics~[\citenumber{shamsuzzoha2011real},\citenumber{shamsuzzoha2013performance},\citenumber{dong2023design}] and warehousing~[\citenumber{LeeWarehouse2019}], for instance, tracking the location of moving objects is critical. In everyday life, most of us rely on our smartphones’ ability to pinpoint our location as we navigate and travel.

Geolocation can be performed using landmarks situated at fixed, precisely known locations. The distance to one of these landmarks is  called the ``range." Geolocation from range values is obtained by finding the intersection of the circles or spheres defined by the ranges, a process called trilateration~[\citenumber{arlinghaus1994practical}]. Alternatively, one can use the angles of the lines of sight to the landmarks and determine ones location by triangulation.

In 2D space, geolocation using range values requires at least three landmarks. This is because three circles generally intersect at a single point. In 3D space, that number increases to four because, in general, three spheres intersect at two points and so a fourth circle is required to disambiguate the position.
Thus, in outdoor areas with high cellular coverage, one can determine their location using four (generically positioned) cell towers. Similarly, in indoor environments, where cell phone reception is poorer~[\citenumber{pahlavan2000overview}], one can instead geolocate in 3D space by using four (well-positioned) wifi signal emitters~[\citenumber{cheng2005accuracywifi},\citenumber{retscher2007performance}]. 
In some cases, the distances to the landmarks are only known up to an unknown (constant) shift. In that case, one talks about geolocation from ``pseudoranges" and the underlying localization technique is called ``multilateration."

One example of this is the problem is gunshot localization, where a 
gun, fired at an unknown location and time, 
is heard at known times at several positions nearby using an electro-acoustic system~[\citenumber{gunshothistory2013}].  By dividing the reception times of the gun sound by the speed of sound, one obtains the 
 pseudoranges to  the location from which the gun was fired. Multilateration can then be used to try to pinpoint that location. Other sound events (e.g., door clapping, car backfiring, fireworks) can also be located in a similar fashion.
 
 Another well-known example is global positioning. In that case, a user receives the position and emission times of several precisely synchronized emitters. Outdoors, these could be satellites such as those from GPS or Galileo. Under water, these could be seafloor mounted emitters~[\citenumber{UAVacoustic1998}] such as those used in long baselines (LBL) acoustic positioning systems~[\citenumber{BATISTAlongbaseline2015},\citenumber{ZhangdesignLBL2019}].
 The clocks of the emitters are precisely synchronized with each other, but not with the user's clock. Thus the difference between the emission times of the signals and the reception time on the user's clock only reveals the distance to the emitters up to an unknown constant. As the constant is the same for all the emitters, this determines the pseudoranges to each of the emitters.  

In general, geolocation requires the landmarks' locations to be generic, that is to say, one needs to avoid certain ``bad" configurations from which a unique location cannot be determined. Another situation to avoid is when the landmarks are close to being in a bad configuration, as this can create numerical issues that can prevent accurate localization when the data is perturbed by noise. For example, in global positioning using satellites, one should avoid satellite configurations that are visually close in the sky. This phenomenon is described by the so-called ``geometric dilution of precision" [\citenumber{langley1999dilution}].
Other positions to avoid are the individual sheets of hyperboloids of revolution with one focus at the user position~[\citenumber{Boutin:Kemper:2024}]. As shown in~[\citenumber{Boutin:Kemper:Notices}], localization becomes numerically unstable as the emitters position gets closer to such a configuration. 

Another issue that may arise is that fewer landmarks than needed are available, what we call ``degenerated scenarios." 
For example, 
in order to obtain a signal from a landmark using electromagnetic signals, it is necessary to have a clear line of sight to the landmark. This can be difficult in urban environments where high buildings and narrow streets can obstruct the signal~[\citenumber{chen2022phone}] creating so-called {\em urban canyon}.  Such signal blockage also occurs in natural settings, where the view can be obstructed by trees, hills, or large rocks. In some scenarios, an adversary may be purposefully jamming the satellite signals to prevent location. 
Since a minimal number of landmark signal observations is necessary for unique location, there are many situations where geolocation is not possible.

One option to disambiguate the location when two few landmarks are in view is to use other signals available. For example,
workaround solutions have been proposed for global positioning in urban environment, such as using the phase of the signals~[\citenumber{mohamadi2025phase}] or using other 
telecommunications networks~[\citenumber{koelemeij2022hybrid}].
Another option is to use known constraints on the user position. 
For example,
assuming omnidirectional antennas, the minimum number of cell towers required to determine the 3D location of a user is four. But if the user is assumed to be on the earth surface, then this can be done with only three towers. In particular, if the user is assumed to be on the earth sphere, then the center of the earth can be used as a fourth landmark, and the earth sphere used as a fourth range. This is a reasonable assumption in many scenarios, e.g., when traveling at sea level (on a ship.) Assuming that the sphere centers do not lie on a straight line,
intersecting the four spheres then yields a single solution.
Similar, for global positioning, four is the minimum number of satellites required. With three satellites, the solution set is one-dimensional. But if the user is assumed to be on the earth sphere, three satellites yield a finite number of location possibility-- at most four~[\citenumber{Boutin:Kemper:Eggermont:2025}].

In the following, we focus on the problem of localization from pseudoranges 
(e.g., global positioning, gunshot location, etc.) and analyze the degenerated 
scenarios in detail: that is, the case where too few landmarks for precise 
location are available. We show that the set of possible locations (i.e., the 
solution set) determined by such problems form a quadric, which may degenerate 
in various ways. We also show that the positions of satellites (in the global 
positioning situation) or sensors (in the situation of gunshot location etc.) 
corresponding to the solution set form a quadric as well, and these two 
quadrics exhibit a remarkable duality: they live on perpendicular affine spaces but share the same axis of symmetry. Moreover, the vertices of one quadric are the foci of the other and vice versa. We invite readers to take a look at \cref{fQuadrics} already now, where the situation is illustrated.

This duality has practical consequences. For example, if there are three 
sensors lying on a common plane, such as the ground plane, inside $\RR^3$, our 
results imply that the quadric of solutions is a conic section that passes 
through this plane perpendicularly. This is important since if it is known that 
the actual solution (e.g. the gunshot location) lies very close to that plane, 
then it can be determined very accurately because of the perpendicularity.

Among other things, our results provide a clear description of the solution set of any degenerate localization from pseudorange problem, and hold for any space dimension. Thus, they can be used as a starting point to disambiguate the location, be it by using other signals (e.g., wifi or cellular towers) or by using  a priori knowledge about the location such as geometric navigation constraints, as already mentioned.
We illustrate the latter using two scenarios. In the first scenario, presented in \cref{sec:vacuum_robot}, an indoor robot vacuum uses emitters affixed to the ceiling to determine its location on the ground.  In the second scenario, presented in \cref{sec:life_boat}, we discuss the problem of locating a raft on the ocean.

But before coming to applications, the article works towards the more theoretical results mentioned above. This happens in \cref{sSpheres,sQsol,sQsat,sShape,sIneq,sRank,sDuality}, with \cref{sQsat} containing the main technical work and \cref{sShape} containing the main result, \cref{tSolSat}. Throughout these more theoretical sections we will be using the language of the global positioning problem, so in particular we will say ``satellite positions'' for what in other situations are the known positions of sensors. Consequently, the two quadrics we find will be called the quadric of solutions and the quadric of satellites.

\section{Prelude: Global positioning and sphere
  intersection} \label{sSpheres}%

The well-known \df{global positioning equations} are
\begin{equation} \label{eqGPS}%
  \lVert\ve s_i - \ve x\rVert = t_i - b \qquad (i = 1 \upto m),
\end{equation}
where $\lVert\cdot\rVert$ denotes the Euclidean norm.
Here~$\ve s_i \in \RR^n$ (with $n \ge 1$) are the known positions of
the satellites in view, $m \ge 1$ is their number, the~$t_i \in \RR$
are the known pseudoranges, $\ve x \in \RR^n$ is the unknown user
position, and~$b \in \RR$ is the unknown bias of the user clock. By
squaring both sides we get a slightly weaker system
\begin{equation} \label{eqGPSsquared}%
  \lVert\ve s_i - \ve x\rVert^2 = (t_i - b)^2 \qquad (i = 1 \upto m).
\end{equation}
More precisely,~\cref{eqGPS} is equivalent to~\cref{eqGPSsquared}
together with the inequalities
\begin{equation} \label{eqIneq}%
  t_i \ge b \qquad (i = 1 \upto m).
\end{equation}
We will work with the weaker equations~\cref{eqGPSsquared} through
most of this article, and then go back to the original
equations~\cref{eqGPS} in \cref{sIneq}, where~\cref{eqIneq} will be
considered. Let us write
\begin{equation} \label{eqX}%
  \mathfrak X := \bigl\{(b,\ve x) \in \RR \times \RR^n\ \bigl| \
  \lVert\ve s_i - \ve x\rVert^2 = (t_i - b)^2 \ \text{for} \ i = 1
  \upto m\bigr\} \subseteq \RR \times \RR^n
\end{equation}
for the set of solutions of~\cref{eqGPSsquared}.

The system~\cref{eqGPSsquared} is a special case of the \df{sphere
  intersection problem}: Let $\langle\cdot,\cdot\rangle_Q$ be a
nondegenerate symmetric bilinear form on $\RR^r$ given by a symmetric
matrix $Q \in \RR^{r \times r}$. Given points
$\vh s_1 \upto \vh s_m \in \RR^r$ and numbers $d_1 \upto d_m \in \RR$,
solve the system
\begin{equation} \label{eqSpheres}%
  \langle\vh s_i - \vh x,\vh s_i - \vh x\rangle_Q = d_i \qquad (i = 1
  \upto m)
\end{equation}
for~$\vh x \in \RR^r$. So~\cref{eqGPSsquared} is the special case
$r = n+1$, $Q = \diag(-1,1 \upto 1)$,
$\vh s_i := \left(\begin{smallmatrix} t_i \\ \ve
    s_i\end{smallmatrix}\right)$,
$\vh x := \left(\begin{smallmatrix} b \\ \ve
    x\end{smallmatrix}\right)$, and $d_1 = \cdots = d_m = 0$.

Let us recall a solution procedure for the sphere intersection
problem~\cref{eqSpheres}, which was also presented
in~[\citenumber{Boutin:Kemper:Notices}]. With
\begin{equation} \label{eqA}%
  A :=
  \left(
    \begin{array}{c|c}
      2 \vh s_1^T Q & -1 \\
      \vdots & \vdots \\
      2 \vh s_m^T Q & -1
    \end{array}
  \right) \in \RR^{m \times (r+1)},  
\end{equation}
the system~\cref{eqSpheres} can be rewritten as the matrix equation
\begin{equation} \label{eqMatrix}%
  A \cdot
  \begin{pmatrix}
    \vh x \\
    \langle\vh x,\vh x\rangle_Q
  \end{pmatrix}
  =
  \begin{pmatrix}
    \langle\vh s_1,\vh s_1\rangle_Q - d_1 \\
    \vdots \\
    \langle\vh s_m,\vh s_m\rangle_Q - d_m
  \end{pmatrix}.
\end{equation}
Introducing a further unknown~$\lambda \in \RR$, we see that this is
equivalent to the equations
\begin{equation} \label{eqLinear}%
  A \cdot
  \begin{pmatrix}
    \vh x \\
    \lambda
  \end{pmatrix}
  =
  \begin{pmatrix}
    \langle\vh s_1,\vh s_1\rangle_Q - d_1 \\
    \vdots \\
    \langle\vh s_m,\vh s_m\rangle_Q - d_m
  \end{pmatrix}
\end{equation}
and
\begin{equation} \label{eqQuadratic}%
  \langle\vh x,\vh x\rangle_Q - \lambda = 0.
\end{equation}

So if we write $L$ for the set of solutions of~\cref{eqLinear}, which
is an affine subspace of $\RR^{r+1}$, then~\cref{eqSpheres} can be
solved by imposing the equation~\cref{eqQuadratic} on the points of
$L$. So as a special case we obtain a solution procedure for the
squared global positioning equations~\cref{eqGPSsquared}.

\section{The quadric of solutions} \label{sQsol}

In this section we analyze the solution of the squared global
positioning equations~\cref{eqGPSsquared}, obtained in \cref{sSpheres}
as a special case of the sphere intersection problem, in more
detail. We start by assuming that the matrix
\begin{equation} \label{eqB}%
  B :=
  \begin{pmatrix}
    2 \ve s_1^T & -1 \\
    \vdots & \vdots \\
    2 \ve s_m^T & -1
  \end{pmatrix} \in \RR^{m \times (n+1)},
\end{equation} 
has rank~$m$. (This assumption will be made during
\cref{sQsol,sQsat,sShape,sIneq}). This is equivalent to each of the
following assumptions:
\begin{itemize}
\item the linear space
  $U := \langle\ve s_i - \ve s_1 \mid 2 \le i \le
  m\rangle_\mathrm{lin} \subseteq \RR^n$ generated by the
  $\ve s_i - \ve s_1$ has dimension~$m-1$,
\item the affine space
  $\langle\ve s_1 \upto \ve s_m\rangle_\mathrm{aff} \subseteq \RR^n$
  generated by the~$\ve s_i$ has dimension~$m - 1$,
\item the~$\ve s_i$ are in general linear position and $m - 1 \le n$.
\end{itemize}
We will see in \cref{sDuality} that making this assumption or the
(similar) one made during \cref{sRank} provides sufficient
generality. With $k := n - (m - 1)$ (which may be any integer
between~$0$ and~$n$), we choose an orthonormal basis
$\ve w_1 \upto \ve w_k$ of the orthogonal complement
$U^\perp \subseteq \RR^n$. So with
$\gamma_j := \langle\ve s_i,\ve w_j\rangle$ (the Euclidean inner
product), which is
independent of~$i$, the vectors $\left(\begin{smallmatrix} \ve w_j \\
    2 \gamma_j\end{smallmatrix}\right)$ form a basis of the kernel of
$B$. Since $B$ has rank~$m$ there exist $\ve v \in \RR^n$ and
$\beta \in \RR$ such that
\begin{equation} \label{eqVbeta}
  B \cdot
  \begin{pmatrix}
    \ve v \\ \beta
  \end{pmatrix} =
  \begin{pmatrix}
    \lVert\ve s_1\rVert^2 - t_1^2 \\
    \vdots \\
    \lVert\ve s_m\rVert^2 - t_m^2
  \end{pmatrix}.  
\end{equation}
Moreover, we can and will assume
$\langle\ve v,\ve w_j\rangle = \gamma_j$ for all~$j$, which
makes~$\ve v$ and~$\beta$ unique. Notice that
$\langle\ve v,\ve w_j\rangle = \gamma_j$ means that
\begin{equation} \label{eqVsi}%
  \ve v \in \langle\ve s_1 \upto \ve s_m\rangle_\mathrm{aff}.
\end{equation}

In our case the matrix $A$ from~\cref{eqA} takes the form
\begin{equation} \label{eqAGPS}%
  A =
  \begin{pmatrix}
    -2 t_1 & 2 \ve s_1^T & -1 \\
    \vdots & \vdots & \vdots \\
    - 2 t_m & 2 \ve s_m^T & -1
  \end{pmatrix} \in \RR^{m \times (n+2)}.    
\end{equation}
Having $B$ as a submatrix, $A$ also has rank~$m$, so there exist
$\ve u \in \RR^n$ and $\alpha \in \RR$ such that
\begin{equation} \label{eqUalpha}%
  A \cdot
  \begin{pmatrix}
    1 \\ \ve u \\ 2 \alpha
  \end{pmatrix} = \ve 0 \quad \text{and} \quad \langle\ve u,\ve
  w_j\rangle = 0 \ \text{for all} \ j.
\end{equation}
Again, these conditions make~$\ve u$ and~$\alpha$ unique, and
$\ve u \in U$.

\begin{theorem}[The set of solutions
  of~\cref{eqGPSsquared}] \label{tSol}%
  Let $\ve s_1 \upto \ve s_m \in \RR^n$ be in general linear position,
  $m \le n+1$, and let $t_1 \upto t_m \in \RR$. Form
  $\ve u,\ve v,\ve w_1 \upto \ve w_k \in \RR^n$ and
  $\alpha,\beta,\gamma_1 \upto \gamma_k \in \RR$ as above. Then the
  set of solutions of~\cref{eqGPSsquared} is
  \[
    \mathfrak X = \Bigl\{(b,\ve x) \in \RR \times \RR^n \ \Bigl| \ \ve
    x = \ve v + b \ve u + \sum_{j=1}^k y_j \ve w_j \ \text{and} \ \
    g(b,y_1 \upto y_k) = 0\Bigr\},
  \]
  where
  \[
    g(b,y_1 \upto y_k) := \sum_{j=1}^k y_j^2 + \bigl(\lVert\ve
    u\rVert^2 - 1\bigr) b^2 + 2 \bigl(\langle\ve u,\ve v\rangle -
    \alpha\bigr) b + \lVert\ve v\rVert^2 -\beta.
  \]
\end{theorem}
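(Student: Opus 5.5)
Following \cref{sSpheres}, I would first describe the affine space $L$ of solutions of~\cref{eqLinear} in the present setting, and then impose~\cref{eqQuadratic} on it. Here $\vh x = \left(\begin{smallmatrix} b \\ \ve x\end{smallmatrix}\right)$, $Q = \diag(-1,1 \upto 1)$, $d_i = 0$, so $\langle\vh s_i,\vh s_i\rangle_Q - d_i = \lVert\ve s_i\rVert^2 - t_i^2$ and $\langle\vh x,\vh x\rangle_Q = \lVert\ve x\rVert^2 - b^2$. By \cref{sSpheres}, $(b,\ve x) \in \mathfrak X$ if and only if, with $\lambda := \lVert\ve x\rVert^2 - b^2$, the vector $(b,\ve x,\lambda)$ solves~\cref{eqLinear} with the matrix $A$ from~\cref{eqAGPS}.

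Step one is to parametrize $L$. A particular solution is $(0,\ve v,\beta)$ by~\cref{eqVbeta}, since the first column of $A$ gets multiplied by $0$ and the remaining columns form $B$. The kernel of $A$ has dimension $(n+2) - m = k+1$ because $A$ has rank~$m$. It contains $(1,\ve u,2\alpha)$ by~\cref{eqUalpha}, and it contains $(0,\ve w_j,2\gamma_j)$ for $j = 1 \upto k$, since these vectors lie in the kernel of $B$. These $k+1$ vectors are linearly independent: the first is the only one with nonzero first entry, and the $\ve w_j$ are orthonormal. Hence
\[
  L = \Bigl\{\bigl(b,\ \ve v + b\ve u + \textstyle\sum_j y_j \ve w_j,\ \beta + 2\alpha b + 2\textstyle\sum_j y_j\gamma_j\bigr) \ \Bigl| \ b,y_1 \upto y_k \in \RR\Bigr\}.
\]
In particular, every $(b,\ve x) \in \mathfrak X$ has the stated form of $\ve x$.

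Step two imposes $\lVert\ve x\rVert^2 - b^2 = \lambda$ and expands. The orthogonality relations do the work: $\langle\ve u,\ve w_j\rangle = 0$, $\langle\ve w_i,\ve w_j\rangle = \delta_{ij}$, and $\langle\ve v,\ve w_j\rangle = \gamma_j$. With these,
\[
  \lVert\ve x\rVert^2 = \lVert\ve v\rVert^2 + b^2\lVert\ve u\rVert^2 + \textstyle\sum_j y_j^2 + 2b\langle\ve u,\ve v\rangle + 2\sum_j y_j\gamma_j.
\]
Subtracting $b^2 + \beta + 2\alpha b + 2\sum_j y_j\gamma_j$, the linear terms in the $y_j$ cancel, and what remains is exactly $g(b,y_1 \upto y_k)$. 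This shows $\mathfrak X$ equals the displayed set. For the reverse inclusion, note that any $(b,\ve x)$ of the stated form, together with the $\lambda$ above, lies in $L$, so the equivalence from \cref{sSpheres} applies in both directions.

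The only delicate point, which is minor, is the cancellation of the $y_j$-linear terms. It relies on the normalization $\langle\ve v,\ve w_j\rangle = \gamma_j$ and on the kernel vectors carrying exactly $2\gamma_j$ in their last entry. One also needs the dimension count, which uses $\operatorname{rank} A = m$, to know that these vectors span the whole kernel.
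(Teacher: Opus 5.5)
Your proposal is correct and is essentially the paper's proof. You write $L$ as the particular solution $(0,\ve v,\beta)$ plus the span of the kernel basis $(1,\ve u,2\alpha)$, $(0,\ve w_j,2\gamma_j)$, then impose \cref{eqQuadratic} and expand using $\langle\ve u,\ve w_j\rangle=0$ and $\langle\ve v,\ve w_j\rangle=\gamma_j$, merely spelling out the dimension count and the cancellation of the $y_j$-linear terms that the paper leaves to the reader.
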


\begin{proof}
  It follows directly from our definitions that
  $\left(\begin{smallmatrix} 1 \\ \ve u \\
      2 \alpha\end{smallmatrix}\right)$ and the
  $\left(\begin{smallmatrix} 0 \\ \ve w_j \\
      2 \gamma_j\end{smallmatrix}\right)$ ($j = 1 \upto k$) form a
  basis of the kernel of $A$, and that the solution space $L$ of the
  linear system~\cref{eqLinear} is
  \[
    L = \Bigl\{
    \begin{pmatrix}
      0 \\ \ve v \\ \beta
    \end{pmatrix} + b
    \begin{pmatrix}
      1 \\ \ve u \\ 2 \alpha
    \end{pmatrix} + \sum_{j=1}^k y_j
    \begin{pmatrix}
      0 \\ \ve w_j \\ 2 \gamma_j
    \end{pmatrix} \ \Bigl| \ b,y_1 \upto y_k \in \RR\Bigr\}.
  \]
  So we need to apply the equation~\cref{eqQuadratic} to
  $\vh x = \left(\begin{smallmatrix} b \\ \ve v + b \ve u + \sum y_j
      \ve w_j\end{smallmatrix}\right)$ and $\lambda = \beta + 2 \alpha
  b + 2 \sum y_j \gamma_j$, and this yields the condition $g(b,y_1
  \upto y_k) = 0$.
\end{proof}

Let us write
\begin{equation} \label{eqQsol}%
  \Sol := \bigl\{\ve x \in \RR^n \mid (b,\ve x) \in \mathfrak X \
  \text{for some} \ b \in \RR\bigr\}  
\end{equation}
for the set of points~$\ve x$ that come from a solution $(b,\ve x)$ of
\cref{eqGPSsquared}. It is clear from \cref{tSol} that $\Sol$ is a
quadric, which may be degenerate in several ways, within the affine
subspace
$\ve v + \langle\ve u,\ve w_1 \upto \ve w_k\rangle_\mathrm{lin}$. We
call $\Sol$ the \df{quadric of solutions}, and we will classify the
types of quadric that occur in \cref{sShape}. The following
\cref{exSphere1,pCollinear} look at two degenerate cases.

\begin{ex} \label{exSphere1}%
  Can it happen that $\ve u = \ve 0$? By~\cref{eqUalpha}, this implies
  $t_i = - \alpha$, so the~$t_i$ have to be all equal. Conversely, if
  all~$t_i$ are equal, then $\ve u := \ve 0$ and $\alpha := -t_i$
  satisfy~\cref{eqUalpha}. So $\ve u = \ve 0$ precisely if all~$t_i$
  are equal. This includes the case $m = 1$. From now on we assume
  that $\ve u = \ve 0$.
  Then~\cref{eqVbeta} can be rewritten as
  $2 \langle\ve s_i,\ve v\rangle - \lVert\ve s_i\rVert^2 = \beta -
  \alpha^2$ for all~$i$, so
  \begin{equation} \label{eqSphere}%
    \lVert\ve s_i - \ve v\rVert^2 = \lVert\ve v\rVert^2 - \beta +
    \alpha^2 =: r^2.    
  \end{equation}
  So also all $\lVert\ve s_i - \ve v\rVert$ are equal, which means
  that the~$\ve s_i$ lie on a sphere around~$\ve v$ with
  radius~$r$. Can it happen that $r = 0$? Not if $m > 1$, since
  $r = 0$ would imply $\ve s_1 = \ve s_2$, contradicting the general
  linear position hypothesis. But if $m = 1$, then~\cref{eqVsi}
  implies $\ve v = \ve s_1$, so then $r = 0$. Let us now apply
  \cref{tSol}.  In our situation $g(b,y_1 \upto y_k)$ specializes to
  \[
    g(b,y_1 \upto y_k) = \sum_{j=1}^k y_j^2 - b^2 - 2 \alpha b +
    \lVert\ve v\rVert^2 -\beta = \sum_{j=1}^k y_j^2 - (b + \alpha)^2 +
    r^2,
  \]
  so
  \begin{equation} \label{eqSolU0}%
    \mathfrak X = \Bigl\{(b,\ve x) \in \RR \times \RR^n \ \Bigl| \ \ve
    x = \ve v + \sum_{j=1}^k y_j \ve w_j, y_j \in \RR \
    \text{arbitrary, and} \ b = -\alpha \pm \sqrt{\textstyle r^2 +
      \sum_{j=1}^k y_j^2}\Bigr\}.
  \end{equation}
  Thus
  \begin{equation} \label{eqSolSphere}%
    \Sol = \ve v + U^\perp \quad \text{with} \quad U = \langle\ve s_i
    - \ve s_1 \mid 2 \le i \le m\rangle_\mathrm{lin}.
  \end{equation}
  So here the quadric degenerates to a $k$-dimensional affine
  subspace. The situation is illustrated in \cref{fSolSat2}. If
  $k = 0$, then $\Sol = \{\ve v\}$ and
  $\mathfrak X = \{(-\alpha + r,\ve v),(-\alpha - r,\ve v)\}$, and if
  $m = 1$, then $\Sol = \RR^n$.

  Taking a glance at~\cref{eqIneq} and comparing with~\cref{eqSolU0},
  we see that the solution(s) $(b,\ve x) \in \mathfrak X$ with $b = -\alpha - \sqrt{r^2 + \sum_{j=1}^k y_j^2}$ satisfy the original global positioning equations~\cref{eqGPS}, while the one where the square root is added does not.
\end{ex}

\begin{lemma}[Two satellites in view] \label{lM2}%
  In the situation of \cref{tSol}, assume $m = 2$. Then with $d :=
  \lVert\ve s_1 - \ve s_2\rVert$, the
  vector~$\ve u$ and two solutions of~\cref{eqGPSsquared} (but not
  necessarily the only ones) are
  \[
    \ve u = \frac{t_1 - t_2}{d^2} (\ve s_1 - \ve s_2), \ \text{so} \
    \lVert\ve u\rVert = \frac{|t_1 - t_2|}{d}, \ \ve x = \frac{\ve s_1
      + \ve s_2 \pm d \cdot \ve u}{2}, \ b = \frac{t_1 + t_2 \pm
      d}{2}.
  \]
\end{lemma}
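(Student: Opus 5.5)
The plan is to verify the three claims directly from the definitions rather than through the full description of \cref{tSol}: $\ve u$ is pinned down by~\cref{eqUalpha}, and the two candidate solutions can simply be substituted into~\cref{eqGPSsquared}.

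\textbf{The vector $\ve u$.} With $m = 2$ we have $U = \langle\ve s_1 - \ve s_2\rangle_\mathrm{lin}$. The conditions $\langle\ve u,\ve w_j\rangle = 0$ say $\ve u \in U$, so we may write $\ve u = c(\ve s_1 - \ve s_2)$ for some $c \in \RR$. The two rows of the equation in~\cref{eqUalpha} read
\[
  -2t_i + 2\langle\ve s_i,\ve u\rangle - 2\alpha = 0 \qquad (i = 1,2).
\]
Subtracting them eliminates $\alpha$ and gives $t_1 - t_2 = \langle\ve s_1 - \ve s_2,\ve u\rangle = c\,d^2$. Since general linear position gives $d \neq 0$, this yields $c = (t_1 - t_2)/d^2$. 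The formula for $\lVert\ve u\rVert$ is then immediate.

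\textbf{The two solutions.} I would take $\ve x_\pm = (\ve s_1 + \ve s_2 \pm d\ve u)/2$ and $b_\pm = (t_1 + t_2 \pm d)/2$, and check~\cref{eqGPSsquared} for $i = 1$; the case $i = 2$ is symmetric under swapping indices, which flips the sign of $\ve s_1 - \ve s_2$ and of $t_1 - t_2$. Set $\delta := (t_1 - t_2)/d$. Then
\[
  \ve s_1 - \ve x_\pm = \tfrac12(\ve s_1 - \ve s_2)(1 \mp \delta),
\]
so $\lVert\ve s_1 - \ve x_\pm\rVert^2 = \tfrac14 d^2 (1 \mp \delta)^2 = \tfrac14 (d \mp (t_1 - t_2))^2$. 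On the other side, $t_1 - b_\pm = \tfrac12(t_1 - t_2 \mp d)$, whose square is the same quantity. For $i = 2$, the analogous computation gives $\ve s_2 - \ve x_\pm = -\tfrac12(\ve s_1 - \ve s_2)(1 \pm \delta)$ and $t_2 - b_\pm = -\tfrac12(t_1 - t_2 \pm d)$, and the squares again agree.

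\textbf{Consistency with \cref{tSol} (optional).} One can also locate these points in the parametrization of \cref{tSol}, taking all $y_j = 0$ and $b = b_\pm$. This would require computing $\ve v$ and $\alpha$, so direct substitution is the cleaner route.

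The only real obstacle is sign bookkeeping with the $\pm$. I expect nothing deeper, since the argument is a direct computation from~\cref{eqUalpha} and~\cref{eqGPSsquared}.
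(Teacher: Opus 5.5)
Your proposal is correct and follows essentially the same route as the paper's proof. You write $\ve u$ as a multiple of $\ve s_1 - \ve s_2$, subtract the two rows of~\cref{eqUalpha} to get $\delta = (t_1-t_2)/d^2$, and verify the two solutions by direct substitution into~\cref{eqGPSsquared}. The paper leaves that substitution to the reader; you carry it out, and your sign bookkeeping checks out.
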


\begin{proof}
  We have
  $\ve u \in U = \langle\ve s_1 - \ve s_2\rangle_\mathrm{lin}$, hence
  $\ve u = \delta (\ve s_1 - \ve s_2)$ with $\delta \in
  \RR$. Moreover, \cref{eqUalpha} tells us
  $-t_i + \langle\ve s_i,\ve u\rangle - \alpha = 0$, so
  $\langle\ve s_1 - \ve s_2,\ve u\rangle = t_1 - t_2$ and
  $\delta = \frac{t_1 - t_2}{d^2}$. This implies the formula
  for~$\ve u$. The solutions $(b,\ve x)$ can be verified by
  substituting them into~\cref{eqGPSsquared}.
\end{proof}

The following result deals with the situation that the coefficients
of~$b^2$, $b$ and the constant coefficient in~$g(b,y_1 \upto y_k)$ are
all zero. As it turns out, this is the most idiosyncratic case
occurring in this paper.

\begin{prop}[A degenerate case] \label{pCollinear}%
  In the situation of \cref{tSol}, we have $g(b,0 \upto 0) = 0$ for
  all~$b$ if and only if $m = 2$ and
  $|t_1 - t_2| = \lVert\ve s_1 - \ve s_2\rVert$. In this case
  \[
    \mathfrak X = \bigl\{(b,\ve x) \mid b \in \RR \ \text{arbitrary},
    \ \ve x = \ve v + b \ve u\bigr\},
  \]
  so $\Sol$ is a line. Moreover, the $(t,\ve s) \in \RR \times \RR^n$
  such that $\lVert\ve s - \ve x\lVert^2 - (t - b)^2 = 0$ for all
  $(b,\ve x) \in \mathfrak X$ are exactly those with
  $\ve s = \ve v + t \ve u$. So all the~$\ve s$ occuring in such pairs
  $(t,\ve s)$ are collinear. In particular, $\ve s_i = \ve v + t_i \ve
  u$ for $i = 1,2$.
\end{prop}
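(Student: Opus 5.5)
The plan is to translate the condition $g(b,0 \upto 0)=0$ for all~$b$ into three scalar equations,
\[
  \lVert\ve u\rVert^2 = 1,\qquad \langle\ve u,\ve v\rangle = \alpha,\qquad \lVert\ve v\rVert^2 = \beta,
\]
and to read these through the defining relations~\cref{eqVbeta} and~\cref{eqUalpha}. Componentwise these say $2\langle\ve s_i,\ve v\rangle - \beta = \lVert\ve s_i\rVert^2 - t_i^2$ and $\langle\ve s_i,\ve u\rangle - \alpha = t_i$. Combining the three conditions, we get for each~$i$
\[
  \lVert\ve s_i - \ve v - t_i\ve u\rVert^2
  = \lVert\ve s_i\rVert^2 - 2\langle\ve s_i,\ve v\rangle + \lVert\ve v\rVert^2 + 2t_i\langle\ve v,\ve u\rangle - 2t_i\langle\ve s_i,\ve u\rangle + t_i^2 .
\]
Substituting $\lVert\ve s_i\rVert^2 - 2\langle\ve s_i,\ve v\rangle = t_i^2 - \beta$, $\lVert\ve v\rVert^2 = \beta$, $\langle\ve v,\ve u\rangle = \alpha$ and $\langle\ve s_i,\ve u\rangle = t_i + \alpha$, everything cancels. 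Hence $\ve s_i = \ve v + t_i\ve u$ for all~$i$.

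\emph{Necessity.} The $\ve s_i$ then lie on a line. By general linear position this forces $m\le 2$. Also $\ve u\ne\ve 0$ because $\lVert\ve u\rVert = 1$, so by \cref{exSphere1} the $t_i$ are not all equal, which excludes $m=1$. Thus $m=2$, and $\lVert\ve s_1-\ve s_2\rVert = |t_1-t_2|\,\lVert\ve u\rVert = |t_1-t_2|$.

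\emph{Sufficiency.} Assume $m=2$ and $d=|t_1-t_2|$. \Cref{lM2} gives $\lVert\ve u\rVert = 1$, so the $b^2$-coefficient of $g$ vanishes. For the other two coefficients, I would use the two explicit solutions of \cref{lM2}: there $\ve x = (\ve s_1+\ve s_2\pm d\ve u)/2$ and $b = (t_1+t_2\pm d)/2$. By \cref{tSol} each solution has the form $\ve x = \ve v + b\ve u + \sum_j y_j\ve w_j$. Since $\ve x-\ve v$ and $\ve u$ lie in $U$ by~\cref{eqVsi}, all $y_j=0$. So $g(b,0\upto0)$, which is now an affine function of~$b$, vanishes at the two values $b=(t_1+t_2\pm d)/2$. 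These are distinct because $d>0$, so the affine function is identically zero.

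\emph{Description of $\mathfrak X$.} In this case $g = \sum_j y_j^2$, so \cref{tSol} gives $\mathfrak X = \{(b,\ve v+b\ve u)\}$, and $\Sol$ is a line since $\ve u\ne\ve 0$.

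\emph{The pairs $(t,\ve s)$.} The identity $\ve s_i = \ve v + t_i\ve u$ was already obtained above. For a general pair, expand $h(b) := \lVert\ve s-\ve v-b\ve u\rVert^2 - (t-b)^2$. Using $\lVert\ve u\rVert = 1$, the $b^2$ terms cancel and $h$ is affine in $b$:
\[
  h(b) = \lVert\ve s-\ve v\rVert^2 - t^2 + 2b\bigl(t - \langle\ve s-\ve v,\ve u\rangle\bigr).
\]
So $h\equiv 0$ if and only if $\langle\ve s-\ve v,\ve u\rangle = t$ and $\lVert\ve s-\ve v\rVert^2 = t^2$. Since $\ve u$ is a unit vector, this is the equality case of Cauchy--Schwarz, which gives $\ve s-\ve v = t\ve u$. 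Conversely, $\ve s = \ve v + t\ve u$ clearly satisfies both conditions. Collinearity of all such $\ve s$ follows immediately.

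The main obstacle is the necessity direction: finding the right combination that produces $\ve s_i = \ve v + t_i\ve u$. The expansion above is what I would try first. A second point needing care is the sufficiency step, where one must check that the solutions of \cref{lM2} have $y_j=0$.
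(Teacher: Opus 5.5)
Your proof is correct. The characterization of the pairs $(t,\ve s)$ is exactly the paper's argument: the expansion is affine in $b$, and the equality case of Cauchy--Schwarz finishes it. The two directions of the equivalence, however, are organized differently.

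\emph{Necessity.} You derive $\ve s_i = \ve v + t_i\ve u$ by substituting \cref{eqVbeta,eqUalpha} into $\lVert\ve s_i - \ve v - t_i\ve u\rVert^2$. The paper never does this computation. It first proves that the pairs compatible with all of $\mathfrak X$ are exactly those with $\ve s = \ve v + t\ve u$. It then notes that each $(t_i,\ve s_i)$ is trivially such a pair. So your identity is correct but redundant with the Cauchy--Schwarz step you carry out afterwards. For the distance equality the paper cites \cref{lM2}; your observation that $\ve s_1 - \ve s_2 = (t_1 - t_2)\ve u$ is slightly more direct.

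\emph{Sufficiency.} The paper does not use \cref{lM2}. It checks directly that $(\lambda_1 t_1 + \lambda_2 t_2,\ \lambda_1\ve s_1 + \lambda_2\ve s_2)$ solves \cref{eqGPSsquared} for all $\lambda_1 + \lambda_2 = 1$. Since $t_1 \ne t_2$, the bias then sweeps all of $\RR$, so $g(b,0,\ldots,0)$ vanishes for every $b$. Your version first kills the $b^2$ coefficient with $\lVert\ve u\rVert = 1$ from \cref{lM2}. It then uses two distinct zeros of the remaining affine function. This is equally valid, and you justify $y_j = 0$ more explicitly than the paper does. The paper's route has the advantage of exhibiting the whole line of solutions directly, which is the geometric content of the proposition.
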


\begin{proof}
  We start by assuming $m = 2$ and
  $|t_1 - t_2| = \lVert\ve s_1 - \ve s_2\rVert$. Take
  $\ve x \in \langle\ve s_1,\ve s_2\rangle_\mathrm{aff}$, so
  $\ve x = \lambda_1 \ve s_1 + \lambda_2 \ve s_2$ with
  $\lambda_1 + \lambda_2 = 1$. With
  $b := \lambda_1 t_1 + \lambda_2 t_2$ we have
  \[
    \lVert\ve x - \ve s_i\rVert^2 - (b - t_i)^2 = (\lambda_i - 1)^2
    \lVert\ve s_1 - \ve s_2\rVert^2 - (\lambda_i - 1)^2 (t_1 - t_2)^2
    = 0,
  \]
  so $(b,\ve x) \in \mathfrak X$. Therefore
  $\langle\ve s_1,\ve s_2\rangle_\mathrm{aff} \subseteq \Sol$. Because
  $m = 2$ we have
  $\langle\ve s_1,\ve s_2\rangle_\mathrm{aff} = \{\ve v + b \cdot \ve
  u \mid b \in \RR\}$, so \cref{tSol} implies that
  $g(b,0 \upto 0) = 0$ for all~$b$.
  
  Conversely, assume that $g(b,0 \upto 0) = 0$ for all~$b$. Then by
  \cref{tSol} the solution set $\mathfrak X$ is as asserted in the
  proposition. The assumption also implies $\lVert\ve u\rVert =
  1$. Let $(t,\ve s) \in \RR \times \RR^n$ such that
  $\lVert\ve s - \ve x\lVert^2 - (t - b)^2 = 0$ for all
  $(b,\ve x) \in \mathfrak X$. Then for all $b \in \RR$ we have
  \[
    0 = \lVert\ve s - \ve v - b \ve u\lVert^2 - (t - b)^2 = \lVert\ve
    s - \ve v\rVert^2 - 2\langle\ve s - \ve v,\ve u\rangle b - t^2 + 2
    t b,
  \]
  so $t = \langle\ve u,\ve s - \ve v\rangle$ and
  $\lVert\ve s - \ve v\lVert^2 = \langle\ve u,\ve s - \ve
  v\rangle^2$. The second equation together with
  $\lVert\ve u\rVert = 1$ implies
  $\ve s - \ve v = \langle\ve u,\ve s - \ve v\rangle \ve u = t \ve u$,
  so $\ve s = \ve v + t \ve u$.
  Conversely, $\ve s = \ve v + t \ve u$ implies
  $\lVert\ve s - \ve x\lVert^2 - (t - b)^2 = 0$ for all
  $(b,\ve x) \in \mathfrak X$. Since the condition
  $\lVert\ve s - \ve x\lVert^2 - (t - b)^2 = 0$ for all
  $(b,\ve x) \in \mathfrak X$ applies in particular to
  $(t,\ve s) = (t_i,\ve s_i)$, we obtain $\ve s_i = \ve v + t_i \ve u$
  for $i = 1 \upto m$, so $m \le 2$ by the general linear position
  hypothesis. Moreover, $\lVert\ve u\rVert \ne 0$ implies $m > 1$ by
  \cref{exSphere1}, so $m = 2$. By \cref{lM2} we get
  $\lVert\ve s_1 - \ve s_2\rVert = |t_1 - t_2|$.
\end{proof}

\section{The quadric of satellites} \label{sQsat}

Having gotten a formula for the solution set $\mathfrak X$
of~\cref{eqGPSsquared} in \cref{tSol}, in this section we reverse our
point of view by considering the set
\begin{equation} \label{eqS}%
  \mathcal S := \Bigl\{(t,\ve s) \in \RR \times \RR^n \ \Bigl| \
  \lVert\ve s - \ve x\rVert^2 = (t - b)^2 \ \text{for all} \ (b,\ve x)
  \in \mathfrak X\Bigr\}.
\end{equation}
So clearly $(t_i,\ve s_i) \in \mathcal S$ for all~$i$. We continue to
use the definitions and notation from the previous section. The
following result determines $\mathcal S$ under the additional
hypothesis that~\cref{eqGPSsquared} has at least two solutions.

\begin{theorem}[The locus of satellites] \label{tSat}%
  Let $\ve s_1 \upto \ve s_m \in \RR^n$ be in general linear position,
  $m \le n+1$, and let $t_1 \upto t_m \in \RR$. Form
  $\ve u,\ve v,\ve w_1 \upto \ve w_k \in \RR^n$ and
  $\alpha,\beta,\gamma_1 \upto \gamma_k \in \RR$ as defined before
  \cref{tSol}. Set
  \[
    \mathcal S' := \Bigl\{(t,\ve s) \in \RR \times \RR^n \ \Bigl| \
    \langle\ve s,\ve w_j\rangle = \gamma_j \ (j = 1 \upto k), \ t =
    \langle\ve u,\ve s\rangle - \alpha, \ \text{and} \ \ \tilde h(\ve
    s) = 0\Bigr\},
  \]
  where
  \[
    \tilde h(\ve s) := \lVert\ve s\rVert^2 - \langle\ve u,\ve
    s\rangle^2 + 2 \langle\alpha \ve u - \ve v,\ve s\rangle + \beta -
    \alpha^2.
  \]
  Notice that the condition
  ``$\langle\ve s,\ve w_j\rangle = \gamma_j$'' in the definition of
  $\mathcal S'$ means that
  $\ve s \in \langle\ve s_1 \upto \ve s_m\rangle_\mathrm{aff}$.
  \begin{enumerate}[label=(\alph*)]
  \item \label{tSatA} For $i = 1 \upto m$ we have
    $(t_i,\ve s_i) \in \mathcal S'$.
  \item \label{tSatB} $\mathcal S' \subseteq \mathcal S$.
  \item \label{tSatC} If $|\mathfrak X| > 1$, then
    $\mathcal S = \mathcal S'$.
  \end{enumerate}
\end{theorem}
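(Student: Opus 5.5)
Parts~\ref{tSatA} and~\ref{tSatB} follow from direct computation. Part~\ref{tSatC} reduces to the fact that an affine-linear function vanishing on the zero set of~$g$ must vanish identically, which is where the real work lies.

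For~\ref{tSatA}, I take $(t_i,\ve s_i)$ and check the three conditions. The condition $\langle\ve s_i,\ve w_j\rangle = \gamma_j$ holds by the definition of~$\gamma_j$. The $i$-th row of~\cref{eqUalpha} reads $-2t_i + 2\langle\ve s_i,\ve u\rangle - 2\alpha = 0$, which is $t_i = \langle\ve u,\ve s_i\rangle - \alpha$. The $i$-th row of~\cref{eqVbeta} gives $\lVert\ve s_i\rVert^2 - 2\langle\ve v,\ve s_i\rangle + \beta - t_i^2 = 0$. Substituting $t_i^2 = \langle\ve u,\ve s_i\rangle^2 - 2\alpha\langle\ve u,\ve s_i\rangle + \alpha^2$ turns this into exactly $\tilde h(\ve s_i) = 0$.

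For~\ref{tSatB} and~\ref{tSatC}, I will prove one master identity. Take arbitrary $(t,\ve s)$ and set $z_j := \langle\ve s,\ve w_j\rangle - \gamma_j$. For $(b,\ve x)$ with $\ve x = \ve v + b\ve u + \sum y_j\ve w_j$, expand using $\langle\ve u,\ve w_j\rangle = 0$ and $\langle\ve v,\ve w_j\rangle = \gamma_j$ from~\cref{eqVsi}. This should give
\[
  \lVert\ve s - \ve x\rVert^2 - (t-b)^2 = g(b,y_1 \upto y_k) + \ell(b,y_1 \upto y_k),
\]
where
\[
  \ell(b,y_1 \upto y_k) := -2\sum_{j=1}^k z_j y_j + 2\bigl(\langle\ve u,\ve s\rangle - \alpha - t\bigr) b + c,
\]
with $c := \tilde h(\ve s) + (\langle\ve u,\ve s\rangle - \alpha)^2 - t^2$. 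If $(t,\ve s) \in \mathcal S'$, then all coefficients of~$\ell$ vanish, so by \cref{tSol} the left-hand side vanishes on~$\mathfrak X$; this proves~\ref{tSatB}.

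For~\ref{tSatC}, let $Z \subseteq \RR^{k+1}$ be the zero set of~$g$. By \cref{tSol}, $Z$ is in bijection with~$\mathfrak X$, so $|Z| \ge 2$. If $(t,\ve s) \in \mathcal S$, then~$\ell$ vanishes on~$Z$, and I want to conclude that all coefficients $z_j$, $\langle\ve u,\ve s\rangle - \alpha - t$ and~$c$ vanish. This gives $(t,\ve s) \in \mathcal S'$, since $c = \tilde h(\ve s)$ once $t = \langle\ve u,\ve s\rangle - \alpha$.

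Write $g = \sum y_j^2 + q(b)$ with~$q$ a polynomial of degree at most~$2$. The degenerate case $q \equiv 0$ is exactly \cref{pCollinear}. There $\mathcal S = \{(t,\ve v + t\ve u)\}$, and I will check directly that these points lie in~$\mathcal S'$. They lie in the affine span by \cref{eqVsi}, $t = \langle\ve u,\ve s\rangle - \alpha$ follows from $\lVert\ve u\rVert = 1$ and $\langle\ve u,\ve v\rangle = \alpha$, and $\tilde h(\ve s) = 0$ follows from $\lVert\ve v\rVert^2 = \beta$.

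Otherwise~$q$ is a nonzero polynomial, and I split into two steps.
\begin{enumerate}
\item The projection $\{b : q(b) \le 0\}$ of~$Z$ onto the $b$-axis has at least two points. If it were a single point~$b_0$, then $Z = \{(b_0,\ve 0)\}$, contradicting $|Z| \ge 2$.
\item For $k \ge 1$, some~$b$ satisfies $q(b) < 0$. A nonzero polynomial of degree at most~$2$ that is $\ge 0$ everywhere cannot have two distinct zeros, so two points with $q \le 0$ force a point with $q < 0$. Over that~$b$, the set~$Z$ contains a whole sphere of positive radius in the $y$-coordinates. An affine function constant on such a sphere has zero $y$-part, so all $z_j = 0$.
\end{enumerate}
In every case (including $k = 0$, where step~2 is not needed), $\ell$ then reduces to an affine function of~$b$ alone vanishing at two distinct values of~$b$, so its remaining coefficients vanish.

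The main obstacle is this last case analysis on~$q$. In particular, the degenerate case must be handled separately via \cref{pCollinear}, since there~$Z$ lies on a line and does not force $z_j = 0$ by itself.
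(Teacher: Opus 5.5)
Your proof is correct, and for part~(c) it takes a genuinely different route from the paper. One harmless slip: the $b$-coefficient of $\ell$ is $2(t+\alpha-\langle\ve u,\ve s\rangle)$, the negative of what you wrote, but only its vanishing is used.

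\textbf{How the paper proves (b) and (c).} Part~(b) goes through \cref{lSat}\ref{lSatB}, which is your identity restricted to pairs that already satisfy the linear conditions. Part~(c) is proved by linear algebra. The paper shows that appending the row of $(t,\ve s)$ to $A$ and $B$ does not raise the rank:
\begin{itemize}
\item for $B'$, by contradiction: it re-applies \cref{tSol} to the enlarged system and compares with $\dim\langle\mathfrak X\rangle_\mathrm{aff}=k+1$;
\item for $A'$, by using two solutions with distinct biases.
\end{itemize}
It then reads off $\langle\ve s,\ve w_j\rangle=\gamma_j$ and $t=\langle\ve u,\ve s\rangle-\alpha$ from the shared kernel via \cref{lSat}\ref{lSatA}.

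\textbf{How your route differs.} Your identity holds for arbitrary $(t,\ve s)$. It shows directly that $(t,\ve s)\in\mathcal S$ if and only if the affine function $\ell$ vanishes on $Z$. So (c) reduces to the statement that $Z$ affinely spans $\RR^{k+1}$. This is exactly the paper's dimension count: sphere fibres of positive radius, plus two distinct values of $b$. You use it without rank bookkeeping and without the detour through the enlarged system, which makes the argument shorter and clearer about what is really used.

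\textbf{What the paper's packaging buys.} The rank formulation of \cref{lSat} is what \cref{pM} needs later. There, $\rank(A')=m$ comes from maximality of $m$ rather than from $|\mathfrak X|>1$.

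\textbf{Minor points.}
\begin{itemize}
\item For $k=0$, the projection of $Z$ onto the $b$-axis is $\{q=0\}$, not $\{q\le 0\}$. Also, your ``single point'' step for $k\ge 1$ silently uses continuity to get $q(b_0)=0$.
\item In the degenerate case you could avoid \cref{pCollinear}. From $\ell(b,0)=0$ for all $b$ you get $t=\langle\ve u,\ve s\rangle-\alpha$ and $\tilde h(\ve s)=0$. There $\tilde h(\ve s)=\lVert\ve s-\ve v\rVert^2-\langle\ve u,\ve s-\ve v\rangle^2$ with $\lVert\ve u\rVert=1$, so this already forces $\ve s\in\ve v+\RR\ve u$.
\end{itemize}
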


\begin{rem} \label{rX1}%
  The cases $|\mathfrak X| = 1$ and $|\mathfrak X| = 0$ are easily
  dealt with: If $\mathfrak X = \{(b_0,\ve x_0)\}$, then
  \[
    \mathcal S = \bigl\{(t,\ve s) \mid \ve s \in \RR^n \ \text{and} \
    \ t = b_0 \pm \lVert\ve s - \ve x_0\rVert\bigr\}. 
  \]
  If $\mathfrak X = \emptyset$ then $\mathcal S = \RR \times \RR^n$.
\end{rem}

The following lemma is the heart of the proof of \cref{tSat}.

\begin{lemma} \label{lSat}%
  In the situation of \cref{tSat} let $(t,\ve s) \in \RR \times \RR^n$
  and form the matrix $A'\in \RR^{(m+1) \times (n+2)}$ by appending
  the row $(-2 t,2 \ve s^T,-1)$ at the bottom of $A$ as defined
  by~\cref{eqAGPS}.  Let $\mathfrak X'$ be the set of solutions
  $(b,\ve x)$ of the system~\cref{eqGPSsquared}, enlarged by adding
  the pair $(t_{m+1},\ve s_{m+1}) := (t,\ve s)$.
  \begin{enumerate}[label=(\alph*)]
  \item \label{lSatA} If $\rank(A') = m$ then
    $\langle\ve s,\ve w_j\rangle = \gamma_j$ for $j = 1 \upto k$ and
    $t = \langle\ve u,\ve s\rangle - \alpha$.
  \item \label{lSatB} If
    $\langle\ve s,\ve w_j\rangle = \gamma_j$ for $j = 1 \upto k$ and
    $t = \langle\ve u,\ve s\rangle - \alpha$, then
    \[
      \mathfrak X' = \left\{
        \begin{array}{lll}
          \mathfrak X & \text{if} & \tilde h(\ve s) = 0 \\
        \emptyset & \text{if} & \tilde h(\ve s) \ne 0
        \end{array}\right..
    \]
  \end{enumerate}
\end{lemma}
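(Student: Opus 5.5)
For part~\ref{lSatA} we use the kernel description from the proof of \cref{tSol}. The vectors $\left(\begin{smallmatrix} 1 \\ \ve u \\ 2\alpha\end{smallmatrix}\right)$ and $\left(\begin{smallmatrix} 0 \\ \ve w_j \\ 2\gamma_j\end{smallmatrix}\right)$ ($j = 1 \upto k$) form a basis of $\ker(A)$, which has dimension $n+2-m$. Since $A$ is a submatrix of $A'$, we have $\ker(A') \subseteq \ker(A)$. If $\rank(A') = m$, then both kernels have the same dimension, so they are equal. Hence the new row $(-2t, 2\ve s^T, -1)$ annihilates every basis vector of $\ker(A)$. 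Applied to $\left(\begin{smallmatrix} 0 \\ \ve w_j \\ 2\gamma_j\end{smallmatrix}\right)$ this gives $2\langle\ve s,\ve w_j\rangle - 2\gamma_j = 0$. Applied to $\left(\begin{smallmatrix} 1 \\ \ve u \\ 2\alpha\end{smallmatrix}\right)$ it gives $-2t + 2\langle\ve s,\ve u\rangle - 2\alpha = 0$. These are exactly the asserted conditions.

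For part~\ref{lSatB} we use the reduction from \cref{sSpheres}. The set $\mathfrak X'$ consists of the $(b,\ve x)$ for which there is a $\lambda$ such that $(b,\ve x,\lambda)$ solves the enlarged linear system $A'(b,\ve x,\lambda)^T = (\ldots,\lVert\ve s\rVert^2 - t^2)^T$ and satisfies $\lambda = \lVert\ve x\rVert^2 - b^2$. The two hypotheses say that the new row $\ve a := (-2t,2\ve s^T,-1)$ vanishes on $\ker(A)$. Therefore $\ve a \cdot \ve z$ is constant for $\ve z$ in the affine solution space $L$ of~\cref{eqLinear}. We evaluate this constant at the base point $\ve z_0 := (0,\ve v,\beta)$ of $L$ and define
\[
  c := \ve a\cdot \ve z_0 - \bigl(\lVert\ve s\rVert^2 - t^2\bigr) = 2\langle\ve s,\ve v\rangle - \beta - \lVert\ve s\rVert^2 + t^2 .
\]
If $c = 0$, then every point of $L$ satisfies the extra equation, the linear solution set is unchanged, and so $\mathfrak X' = \mathfrak X$. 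If $c \neq 0$, then no point of $L$ satisfies it, the enlarged linear system has no solutions, and $\mathfrak X' = \emptyset$.

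It remains to identify $c$ with $-\tilde h(\ve s)$. Substituting $t = \langle\ve u,\ve s\rangle - \alpha$ gives
\[
  t^2 = \langle\ve u,\ve s\rangle^2 - 2\alpha\langle\ve u,\ve s\rangle + \alpha^2 ,
\]
and hence
\[
  c = -\Bigl(\lVert\ve s\rVert^2 - \langle\ve u,\ve s\rangle^2 + 2\langle\alpha\ve u - \ve v,\ve s\rangle + \beta - \alpha^2\Bigr) = -\tilde h(\ve s).
\]
This yields the stated dichotomy.

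The only point needing care is the equivalence, used in part~\ref{lSatB}, between $\mathfrak X'$ and the enlarged linear-plus-quadratic system. This equivalence is exactly the reduction from \cref{sSpheres}, applied to the data $(t_i,\ve s_i)$, $i = 1 \upto m+1$, and it does not require the rank assumption. Apart from that, the argument is a direct sign check, and I expect no real obstacle.
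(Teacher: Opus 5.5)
Your proof is correct. Part (a) is the same as the paper's argument: equal rank forces equal kernels, and you evaluate the new row on the kernel basis. Part (b) takes a genuinely different route.

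The paper parametrizes $\ve x = \ve v + b\ve u + \sum_j y_j \ve w_j$ and expands $\lVert\ve s - \ve x\rVert^2 - (t-b)^2$ by hand. This gives the identity \cref{eqGh}, namely $g(b,y_1 \upto y_k) + \tilde h(\ve s)$, and the dichotomy is then read off from the description of $\mathfrak X$ in \cref{tSol}.

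You instead stay at the level of the linear system \cref{eqLinear}. The two hypotheses say that the new row annihilates $\ker(A)$, i.e.\ it lies in the row space of $A$. So on $L = (0,\ve v,\beta)^T + \ker(A)$ the added equation is either redundant or inconsistent. The discrepancy is a single constant, which you evaluate at the base point and correctly identify as $-\tilde h(\ve s)$.

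What your version buys:
\begin{itemize}
\item It replaces an error-prone expansion with a two-line consistency argument.
\item It makes the mechanism transparent: the question is just whether an augmented linear system stays consistent.
\item It uses only the kernel basis and the particular solution $(0,\ve v,\beta)^T$, not the full statement of \cref{tSol}.
\end{itemize}
What the paper's computation buys is a pointwise identity valid for all parameters $(b,y_1 \upto y_k)$, not only on $\mathfrak X$. That identity is also implicit in your approach: along $L$ one has $g = \langle\vh x,\vh x\rangle_Q - \lambda$, so $\lVert\ve s - \ve x\rVert^2 - (t-b)^2 = g - c$ follows from your constant $c$.
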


\begin{proof}
  \begin{enumerate}
  \item[\ref{lSatA}] Since $\rank(A) = \rank(A')$, both matrices share
    the same kernel. In particular, the vectors
    $\left(\begin{smallmatrix} 0 \\ \ve w_j \\ 2
        \gamma_j\end{smallmatrix}\right)$ (for $k = 1 \upto k$) and
    $\left(\begin{smallmatrix} 1 \\ \ve u \\
        2 \alpha\end{smallmatrix}\right)$ lie in the kernel of
    $A'$. This implies $\langle\ve s,\ve w_j\rangle - \gamma_j = 0$
    and $-t + \langle\ve s,\ve u\rangle - \alpha = 0$.
  \item[\ref{lSatB}] Let $b,y_1 \upto y_k \in \RR$ and set
    $\ve x := \ve v + b \ve u + \sum_{j=1}^k y_j \ve w_j$. Then
    \begin{multline} \label{eqGh}%
      \lVert\ve s - \ve x\rVert^2 - (t - b)^2 =
      \bigl\langle\textstyle\ve v + b \ve u + \sum_{j=1}^k y_j \ve w_j
      - \ve s,\ve v + b \ve u + \sum_{j=1}^k y_j \ve w_j - \ve
      s\bigr\rangle \\
      - \bigl(\langle\ve u,\ve s\rangle - \alpha - b\bigr)^2 =
      \lVert\ve v\rVert^2 + 2 \langle\ve u,\ve v\rangle b + 2
      \sum_{j=1}^k y_j \gamma_j - 2 \langle\ve v,\ve s\rangle +
      \lVert\ve u\rVert^2 b^2 -
      2 \langle\ve u,\ve s\rangle b \\
      + \sum_{j=1}^k y_j^2 - 2 \sum_{j=1}^k y_k \gamma_j + \lVert\ve
      s\rVert^2 - \langle\ve u,\ve s\rangle^2 + 2 \langle\ve u,\ve
      s\rangle \alpha
      + 2 \langle\ve u,\ve s\rangle b - \alpha^2 - 2 \alpha b - b^2 = \\
      \bigl(\lVert\ve u\rVert^2 - 1\bigr) b^2 + 2 \bigl(\langle\ve
      u,\ve v\rangle - \alpha\bigr) b + \lVert\ve v\rVert^2 - 2
      \langle\ve v,\ve s\rangle + \sum_{j=1}^k y_j^2 + \lVert\ve
      s\rVert^2 - \langle\ve u,\ve s\rangle^2 + 2 \langle\alpha \ve
      u,\ve s\rangle -
      \alpha^2 \\
      = g(b,y_1 \upto y_k) + \tilde h(\ve s).
    \end{multline}
    Suppose $\tilde h(\ve s) = 0$. If $(b,\ve x) \in \mathfrak X$ then
    by \cref{tSol}, $\ve x$ can be written as above and
    $g(b,y_1 \upto y_k) = 0$. So~\cref{eqGh} tells us that
    $\lVert\textstyle\ve s - \ve x\rVert^2 - (t - b)^2 = 0$, so
    $(b,\ve x) \in \mathfrak X'$. Since
    $\mathfrak X' \subseteq \mathfrak X$ holds in any case, we have
    equality. Now suppose $\mathfrak X' \ne \emptyset$, so there is
    $(b,\ve x) \in \mathfrak X'$. Thus $(b,\ve x) \in \mathfrak X$,
    and by \cref{tSol},
    $\ve x = \ve v + b \ve u + \sum_{j=1}^k y_j \ve w_j$ with
    $g(b,y_1 \upto y_m) = 0$. We have
    $\lVert\ve s - \ve x\rVert^2 = (t - b)^2$, so \cref{eqGh} shows
    $\tilde h(\ve s) = 0$. Therefore $\mathfrak X' = \emptyset$ if
    $\tilde h(\ve s) \ne 0$. \endproof
  \end{enumerate}
\end{proof}

\begin{proof}[Proof of \cref{tSat}]
  \begin{enumerate}
      \item[\ref{tSatA}] Let $i \in \{1 \upto m\}$. Then 
      $\langle\ve s_i,\ve w_j\rangle = \gamma_j$ by the definition of
      the~$\gamma_j$, and $t_i = \langle\ve u,\ve s_i\rangle - \alpha$ 
      follows from~\cref{eqUalpha}. From~\cref{eqVbeta} we get 
      $\langle\ve v,\ve s_i\rangle = \frac{1}{2}\bigl(\lVert\ve 
      s_i\rVert^2 - t_i + \beta\bigr)$. Therefore
      \[
      \tilde h(\ve s_i) = \lVert\ve s_i\rVert^2 - (t_i + \alpha)^2
      + 2 \alpha (t_i + \alpha) - \bigl(\lVert\ve s_i\rVert^2 - t_i^2 +
      \beta\bigr) + \beta - \alpha^2 = 0.
      \]
      So we have shown $(t_i,\ve s_i) \in \mathcal S'$.
    \item[\ref{tSatB}] Let $(t,\ve s) \in \mathcal S'$. Then
      \cref{lSat}\ref{lSatB} tells us that
      $\mathfrak X' = \mathfrak X$, so
      $\lVert\ve s - \ve x\rVert^2 = (t - b)^2$ for every
      $(b,\ve x) \in \mathfrak X$. Therefore
      $(t,\ve s) \in \mathcal S$, and the inclusion
      $\mathcal S' \subseteq \mathcal S$ is established.
    \item[\ref{tSatC}] With $g(b,y_1 \upto y_k)$ as defined in
      \cref{tSol}, it is convenient to write
      $g_0(b) := g(b,0 \upto 0)$, which is a polynomial of degree
      $\le 2$. Let $b \in \RR$. We see from \cref{tSol} that in the
      case $k > 0$, there exists an $\ve x$ with
      $(b,\ve x) \in \mathfrak X$ if and only if $g_0(b) \le 0$. In
      the case $k = 0$ the condition is $g_0(b) = 0$. In both cases,
      if $g_0(b) = 0$, then~$\ve x$ is unique. Thus in the case
      $k = 0$ the hypothesis $|\mathfrak X| > 1$ implies that there
      are real numbers $b_1 \ne b_2$ such that $g_0(b_i) = 0$ for
      both~$i$. In addition, \cref{tSol} shows that
      $\langle\mathfrak X\rangle_\mathrm{aff} \subseteq \RR^{n+1}$,
      the affine subspace generated by $\mathfrak X$, has
      dimension~$1$. On the other hand, in the case $k > 0$, we see
      that either $g_0(b) = 0$ for all~$b$ or there are real numbers
      $b_1 \ne b_2$ such that $g_0(b_i) < 0$ for both~$i$. In the
      first subcase we can pick $b_1 \ne b_2$ with $g_0(b_i) = 0$, and
      in the second we obtain
      $\dim\bigl(\langle\mathfrak X\rangle_\mathrm{aff}\bigr) =
      k+1$. What we need to remember from this discussion is the
      following: First, there exist real numbers $b_1 \ne b_2$ and
      points $\ve x_i \in \RR^n$ such that
      $(b_i,\ve x_i) \in \mathfrak X$ for both~$i$. And second,
      $\dim\bigl(\langle\mathfrak X\rangle_\mathrm{aff}\bigr) = k+1$
      or else $g_0$ is identically zero.

      Now let $(t,\ve s) \in \mathcal S$. Form matrices
      $A' \in \RR^{(m+1) \times (n+2)}$ and
      $B' \in \RR^{(m+1) \times (n+1)}$ by appending the rows
      $(-2 t,2 \ve s^T,-1)$ and $(2 \ve s^T,-1)$ at the bottom of $A$
      and $B$, respectively. $A'$ and $B'$ clearly have rank $\ge
      m$. We claim that $\rank(A') = \rank(B') = m$. We first treat
      $B'$, and first consider the case that $g_0(b) = 0$ for
      all~$b$. This case was treated in \cref{pCollinear}, which tells
      us that $m = 2$ and that $\ve s_1,\ve s_2$, and~$\ve s$ are
      collinear. So we have $\rank(B') = 2 = m$. In the case that
      $g_0$ is not identically zero we have
      $\dim\bigl(\langle\mathfrak X\rangle_\mathrm{aff}\bigr) =
      k+1$. By way of contradiction, let us assume
      $\rank(B') = m + 1$. Then we can apply \cref{tSol} to the
      enlarged system~\cref{eqGPSsquared} with
      $(t_{m+1},\ve s_{m+1}) := (t,\ve s)$, and this results in a set
      of solutions $\mathfrak X'$ with
      $\dim\bigl(\langle\mathfrak X'\rangle_\mathrm{aff}\bigr) < k +
      1$. But $\mathfrak X = \mathfrak X'$ since
      $(\ve s,t) \in \mathcal S$, so this is a contradiction. Thus
      also in this case $\rank(B') = m$.

      To deal with the matrix $A'$ we use the
      $(b_i,\ve x_i) \in \mathfrak X$, $i = 1,2$. Since
      $(\ve s,t) \in \mathcal S$, the $(b_i,\ve x_i)$ are also
      solutions of the enlarged system. Therefore they also satisfy
      the matrix version~\cref{eqMatrix}, and we get
      \[
        A' \cdot
        \begin{pmatrix}
          b_1 - b_2 \\ \ve x_1 - \ve x_2 \\ \lVert\ve x_1\rVert^2 -
          b_1^2 - \lVert\ve x_2\rVert^2 + b_2^2
        \end{pmatrix} =
        \begin{pmatrix}
          \lVert\ve s_1\rVert^2 - t_1^2 \\
          \vdots \\
          \lVert\ve s_m\rVert^2 - t_m^2 \\
          \lVert\ve s\rVert^2 - t^2
        \end{pmatrix} -
        \begin{pmatrix}
          \lVert\ve s_1\rVert^2 - t_1^2 \\
          \vdots \\
          \lVert\ve s_m\rVert^2 - t_m^2 \\
          \lVert\ve s\rVert^2 - t^2
        \end{pmatrix} = \ve 0.
      \]
      Since $b_1 \ne b_2$, this shows that the first column of $A'$
      lies in the span of the other columns. But $A'$ is obtained from
      $B'$ by adding the first column, so $\rank(A') = \rank(B') = m$.

      Now \cref{lSat} tells us that
      $\langle\ve s,\ve w_j\rangle = \gamma_j$ for $j = 1 \upto k$ and
      $t = \langle\ve u,\ve s\rangle - \alpha$, and, since
      $\emptyset \ne \mathfrak X = \mathfrak X'$, that
      $\tilde h(\ve s) = 0$.  So $(t,\ve s) \in \mathcal S'$, which
      completes the proof. \endproof
  \end{enumerate}
\end{proof}

Analogously to the quadric $\Sol$ of solutions we also consider the
\df{quadric of satellites}, defined as
\begin{equation} \label{eqQsat}%
  \Sat := \bigl\{\ve s \in \RR^n \mid (t,\ve s) \in \mathcal S \
  \text{for some} \ t \in \RR\bigr\},  
\end{equation}
with $\mathcal S$ from~\cref{eqS}. It should be emphasized that even though the satellite positions~$\ve s_i$ lie on $\Sat$, this does not mean that $\Sat$ is determined by the~$\ve s_i$. Rather, it is determined by the~$\ve s_i$ {\em together} with the~$t_i$. For instance, \cref{exQuadrics} shows that for the same~$\ve s_i$, very different sorts of quadrics occur.  If $|\mathfrak X| > 1$, then by
\cref{tSat} this consists of all points $\ve s$ in the affine span of
$\ve s_1 \upto \ve s_m$ that satisfy $\tilde h(\ve s) = 0$. So $\Sat$
is a quadric within
$\langle\ve s_1 \upto \ve s_m\rangle_\mathrm{aff}$, which may be
degenerate. For example, in the case of \cref{pCollinear}, $\Sat$ and
$\Sol$ are both equal to the line $\RR \cdot\ve u + \ve v$.

We give two examples.

\begin{ex} \label{exSphere2}%
  As a follow-up to \cref{exSphere1} we look at the case
  $\ve u = \ve 0$. We have already seen that this is equivalent to
  all~$t_i$ being equal, and it implies that the~$\ve s_i$ lie on a
  sphere with radius
  $r = \sqrt{\lVert\ve v\rVert^2 - \beta + \alpha^2}$ around the
  point~$\ve v$. Applying \cref{tSat}, we get
  \[
    \tilde h(\ve s) = \lVert\ve s\rVert^2 - 2\langle\ve v,\ve s\rangle
    + \beta - \alpha^2 = \lVert\ve s - \ve v\rVert^2 - r^2,
  \]
  so $\Sat$ is the sphere inside $\langle\ve s_1 \upto \ve
  s_m\rangle_\mathrm{aff}$ with radius~$r$ and center~$\ve
  v$. Moreover,
  \[
    \mathcal S = \bigl\{(-\alpha,\ve s) \mid \ve s \in \langle\ve s_1
    \upto \ve s_m\rangle_\mathrm{aff} \quad \text{and} \quad \lVert\ve
    s - \ve v\rVert = r\bigl\}.
  \]
  Recall that~$-\alpha$ is the common value of all~$t_i$. See
  \cref{fSolSat2} for a picture.
\end{ex}

The next example shows that the hypothesis $|\mathfrak X| > 1$ in
\cref{tSat}\ref{tSatC} cannot be omitted, or replaced by 
$\mathfrak X \ne \emptyset$.

\begin{ex} \label{exCone}%
  Take the points
  \[
    \ve s_1 = (1,0,0)^T, \ \ve s_2 = (2,0,0)^T, \ \text{and} \ \ve s_3 =
    (0,1,0)^T \in \RR^3
  \]
  together with $t_1 = 1$, $t_2 = 2$, and $t_3 = 1$. So
  \[
    B =
    \begin{pmatrix}
      2 & 0 & 0 & -1 \\
      4 & 0 & 0 & -1 \\
      0 & 2 & 0 & -1
    \end{pmatrix} \quad \text{and} \quad
    A =
    \begin{pmatrix}
      -2 & 2 & 0 & 0 & -1 \\
      -4 & 4 & 0 & 0 & -1 \\
      -2 & 0 & 2 & 0 & -1
    \end{pmatrix}.
  \]
  We get $k = 1$, $\ve w_1 = (0,0,1)^T$, $\gamma_1 = 0$, and, since
  $t_i^2 = \lVert\ve s_i\rVert^2$, $\ve v = \ve 0$ and $\beta =
  0$. Moreover, $\ve u = (1,1,0)^T$ and $\alpha = 0$. Applying
  \cref{tSol}, we obtain $g(b,y) = y^2 + b^2$, so
  $\mathfrak X = \{(0,\ve 0)\}$ consists of a single solution. So by
  \cref{rX1} we have
  $\mathcal S = \{(t,\ve s) \mid \ve s \in \RR^3, \ t = \pm \lVert\ve
  s\rVert\}$. However, evaluating $\tilde h(\ve s)$ from \cref{tSat}
  for a point $\ve s = (z_1,z_2,z_3)^T$ yields
  \[
    \tilde h(\ve s) = z_1^2 + z_2^2 + z_3^2 - (z_1 + z_2)^2 = z_3^2 -
    2 z_1 z_2.
  \]
  Since the set $\mathcal S'$ in \cref{tSat} has the additional
  condition $\langle\ve s,\ve w_1\rangle = \gamma_1$, we obtain
  \[
    \mathcal S' := \Bigl\{\bigl(z_1 + z_2,\left(\begin{smallmatrix}
        z_1 \\ z_2 \\ 0\end{smallmatrix}\right)\bigr) \ \bigl| \ z_1
    z_2 = 0\Bigr\}.
  \]
  So the $(t_i,\ve s_i)$ lie in $\mathcal S'$, $\mathcal S'$ is
  contained in $\mathcal S$ (as predicted by \cref{tSat}), but it is
  far from being equal to $\mathcal S$.
\end{ex}

\section{What do the quadrics look like?} \label{sShape}%

In this section we take a closer look at the quadrics $\Sol$ and
$\Sat$ defined in~\cref{eqQsol,eqQsat}. A quick glance at
\cref{fQuadrics} should give readers a good idea of the upshot of this
section. We keep the notation of the previous sections and make the
following three assumptions:
\begin{enumerate}[label=(\arabic*)]
\item \label{A1} the points $\ve s_1 \upto \ve s_m \in \RR^n$ are in
  general linear position, with $m \le n+1$,
\item \label{A2} the system~\cref{eqGPSsquared} has at least two
  solutions, and
\item \label{A3} the~$t_i$ are not all equal.
\end{enumerate}

By \cref{tSol}, $\Sol$ consists of all
$\ve x = \ve v + y_0 \ve u + \sum_{j=1}^k y_j \ve w_j$ such that
$g(y_0,y_1 \upto y_k) = 0$. (It is convenient to write~$y_0$ here
instead of~$b$ appearing in \cref{tSol}.) By \cref{exSphere1}, our
assumption~\ref{A3} implies $\ve u \ne \ve 0$. We first assume
$\Vert\ve u\rVert \ne 1$ and treat the case $\Vert\ve u\rVert = 1$
later. Set
\begin{equation} \label{eqEmd}%
  e := \lVert\ve u\rVert, \quad \mu := \frac{\langle\ve u,\ve v\rangle
    - \alpha}{e^2 - 1} \quad \text{and} \quad \rho := \frac{\bigl(
    \langle\ve u,\ve v\rangle - \alpha\bigr)^2}{e^2 - 1}
  -\lVert\ve v\rVert^2 + \beta.
\end{equation}
Then a short calculation shows
\begin{equation} \label{eqG}%
  g(y_0 \upto y_k) = \sum_{j=1}^k y_j^2 + \sgn(e^2 - 1)
  \Bigl(\sqrt{|e^2 - 1|} \bigl(y_0 + \mu\bigr)\Bigr)^2 - \rho.
\end{equation}
Since $|\mathfrak X| > 1$ this implies $\rho > 0$ if
$\sgn(e^2 - 1) = 1$.

Let us turn our attention to $\Sat$, which is given by
\cref{tSat}. The theorem says that
$\Sat \subseteq \langle\ve s_1 \upto \ve s_m\rangle_\mathrm{aff}$,
which is equal to $\ve v + U$ with
$U = \langle\ve s_i - \ve s_1 \mid 2 \le i \le
m\rangle_\mathrm{lin}$. We have $\ve u \in U$, so we can choose
$\ve w_1' \upto \ve w_{m-2}'$ such that the $\ve w_i'$ together with
$e^{-1} \ve u$ form an orthonormal basis of $U$. Then every
$\ve s \in \Sat$ can be written as
\begin{equation} \label{eqSq}%
  \ve s = \ve v + z_0 \ve u + \sum_{i=1}^{m-2} z_i \ve w_i'  
\end{equation}
with $z_0 \upto z_{m-2} \in \RR$. \cref{tSat} also has the condition
$\tilde h(\ve s) = 0$. Substituting the above~$\ve s$ into~$\tilde h$
and performing some calculation yields
\begin{multline} \label{eqH}%
  h(z_0 \upto z_{m-2}) := \tilde h\bigl(\ve v + z_0 \ve u + \textstyle
  \sum_{i=1}^{m-2} z_i \ve w_i'\bigr) = \\
  \sum_{i=1}^{m-2} z_i^2 - e^2 (e^2 - 1) z_0^2 - 2 e^2
  \bigl(\langle\ve u,\ve v\rangle - \alpha\bigr) z_0 -
  \bigl(\langle\ve u,\ve v\rangle - \alpha\bigr)^2 - \lVert\ve
  v\rVert^2 + \beta.
\end{multline}
As above, we first assume $e \ne 1$. Then, again after a bit
of calculation, we obtain
\begin{equation} \label{eqH2}%
  h(z_0 \upto z_{m-2}) = \sum_{i=1}^{m-2} z_i^2 - \sgn(e^2 - 1)
  \Bigl(e \sqrt{|e^2 -1|} (z_0 + \mu)\Bigr)^2 + \rho
\end{equation}
with~$\mu$ and~$\rho$ defined in~\cref{eqEmd}. By \cref{tSat} all
$(t_i,\ve s_i)$ lie in $\mathcal S$, so the~$\ve s_i$
satisfy~\cref{eqH2}. Therefore the assumtion~\ref{A3} implies
that~\cref{eqH2} has at least two solutions, so if
$\sgn(e^2 - 1) = -1$, then $\rho < 0$. Together with what we have
found after~\cref{eqG} we obtain
\[
  \sgn(e^2 - 1) = \sgn(\rho).
\]
We can now use~\cref{eqG,eqH2} to determine the types of quadric we
get and to pick out the parameters. The results can be found in
columns for the cases $e > 1$ and $0 < e < 1$ in \cref{taQuadrics}. A
few explanations and comments should be made:

\begin{enumerate}[label=(\arabic*)]
\item In the ``type'' row, ``spheroid'' is short for ``prolate
  spheroid,'' and ``hyperboloid'' is short for ``hyperboloid of
  revolution of two sheets.''
\item By ``semiaxis $a$'' we mean the distance from the vertices to
  the center, often called the major semiaxis. The minor semiaxis
  appears in the table as ``semiaxis $b$.''
\item The eccentricity of a prolate spheroid is
  $\sqrt{1 - \left(\frac{b}{a}\right)^2}$, and the eccentricity of a
  hyperboloid of revolution of two sheets is
  $\sqrt{1 + \left(\frac{b}{a}\right)^2}$. Both formulas are well
  known and can be found in~[\citenumber{Boutin:Kemper:2024}] for
  general dimension. The distance from the foci to the center, also
  known as the linear eccentricity, is easy to determine: it is the
  major semiaxis times the eccentricity.
\item If~$k = 0$, $\Sol$ consists of just two points, and likewise for
  $\Sat$ if $m = 2$. In these cases, calling the sets ``spheroid'' or
  ``hyperboloid'' and assigning foci and eccentricity to them may seem
  a bit fictitious. This could be remedied by regarding $\Sol$ and
  $\Sat$ not as mere sets of points but enriching them by additional
  data, such as the eccentricity.

  If $k = 1$ for $\Sol$ or $m = 3$ for $\Sat$, the more appropriate
  type labels would be ``ellipse'' and ``hyperbola,'' and the axis of
  symmetry is not uniquely determined by the mere set of points.
\end{enumerate}

\begin{table}[h]
  \addtolength{\arraycolsep}{10mm}%
  \renewcommand{\arraystretch}{1.6}
  \newcommand{\bx}[2][18mm]{\parbox[c]{#1}{\centering\smallskip #2\smallskip}}
  \begin{tabular}{|c|c|c|c|c|c|c|}
    \hline%
    \bf quadric & \multicolumn{3}{c|}{$\Sol$} &%
    \multicolumn{3}{c|}{$\Sat$} \\%
    \hline%
    \bf case & $e > 1$ & $0 < e < 1$ & \bx{$e = 1$, $3 \le m \le n$} &%
    $e > 1$ & $0 < e < 1$ & \bx{$e = 1$, $3 \le m \le n$} \\%
    \hline%
    \bf type & spheroid & hyperboloid & paraboloid & hyperboloid &%
    spheroid & paraboloid \\%
    \hline%
    \bx{\bf axis of symmetry}  & \multicolumn{6}{c|}%
    {$\ve v + \RR \ve u$} \\%
    \hline%
    \bf center & \multicolumn{2}{c|}{$\ve c := \ve v - \mu \ve u$} &%
    N/A & \multicolumn{2}{c|}{$\ve c := \ve v - \mu \ve u$} & N/A \\%
    \hline%
    \bf vertices & \multicolumn{2}{c|}%
      {\bx[30mm]{$\ve c \pm \sqrt{\frac{\rho}{e^2 - 1}}
      \cdot \ve u$}} & $\ve v + \lambda_1 \ve u$ & \multicolumn{2}{c|}%
      {\bx[30mm]{$\ve c \pm e^{-1} \sqrt{\frac{\rho}{e^2 - 1}}%
      \cdot \ve u$}} & $\ve v + \lambda_2 \ve u$ \\
    \hline%
    \bf semiaxis $a$ & \multicolumn{2}{c|}%
      {\bx[30mm]{$a := e \sqrt{\frac{\rho}{e^2 - 1}}$}} & N/A &%
    \multicolumn{2}{c|}{\bx[30mm]{$a := \sqrt{\frac{\rho}{e^2 - 1}}$}}%
    &N/A \\%
    \hline%
    \bf semiaxis $b$ & \multicolumn{2}{c|}{$b := \sqrt{|\rho|}$} & N/A &%
    \multicolumn{2}{c|}{$b := \sqrt{|\rho|}$} & N/A \\%
    \hline%
    \bf eccentricity & \multicolumn{3}{c|}{$e^{-1}$} &
    \multicolumn{3}{c|}{$e$} \\
    \hline%
    \bx{\bf semilatus rectum} & \multicolumn{2}{c|}{$e^{-1}
      \sqrt{\rho (e^2 - 1)}$} & $|\langle\ve u,\ve v\rangle -%
    \alpha|$ & \multicolumn{2}{c|}{$\sqrt{\rho (e^2 - 1)}$} &%
    $|\langle\ve u,\ve v\rangle -\alpha|$ \\%
    \hline%
    \bf foci & \multicolumn{2}{c|}%
      {\bx[30mm]{$\ve c \pm e^{-1} \sqrt{\frac{\rho}{e^2 - 1}}
      \cdot \ve u$}} & $\ve v + \lambda_2 \ve u$ & \multicolumn{2}{c|}%
      {\bx[30mm]{$\ve c \pm \sqrt{\frac{\rho}{e^2 - 1}}%
      \cdot \ve u$}} & $\ve v + \lambda_1 \ve u$ \\
    \hline%
  \end{tabular}
  \bigskip
  \caption{Type and parameters of the quadrics $\Sol$ and $\Sat$
    under the assumptions~\ref{A1}--\ref{A3}. The case of
    \cref{pCollinear} is excluded from the table.} \label{taQuadrics}
\end{table}

We now treat the case $e = 1$, in which the~$y_0^2$-part
and~$z_0^2$-part in $g(y_0 \upto y_k)$ and $h(z_0 \upto z_{m-2})$,
respectively, vanishes. A subcase is that also the linear part
vanishes. Then
\begin{equation} \label{eqCylinder}%
  g(y_0 \upto y_k) = \sum_{j=1}^k y_j^2 + \lVert\ve v\rVert^2 -
  \beta \quad \text{and} \quad h(z_0 \upto z_{m-2}) = \sum_{i=1}^{m-2}
  z_i^2 - \lVert\ve v\rVert^2 + \beta,  
\end{equation}
and our assumptions~\ref{A2} and~\ref{A3} on \cpageref{A2} imply
$\lVert\ve v\rVert^2 - \beta = 0$. So this is the case of
\cref{pCollinear}.

On the other hand, assuming that $e = 1$ but
$\langle\ve u,\ve v\rangle - \alpha \ne 0$, we set
\begin{equation} \label{eqLambda}%
  \lambda_1 := \frac{\beta - \lVert\ve v\rVert^2}{2\bigl(\langle\ve
    u,\ve v\rangle - \alpha\bigr)} \quad \text{and} \quad \lambda_2 :=
  \lambda_1 - \frac{\langle\ve u,\ve v\rangle - \alpha}{2}.  
\end{equation}
With this,
\begin{equation} \label{eqG2}%
  g(y_0 \upto y_k) = \sum_{j=1}^k y_j^2 + 2 \bigl(\langle\ve u,\ve
  v\rangle - \alpha\bigr) \bigl(y_0 - \lambda_1\bigr)
\end{equation}
and
\begin{equation} \label{eqH3}%
  h(z_0 \upto z_{m-2}) = \sum_{i=1}^{m-2} z_i^2 - 2 \bigl(\langle\ve
  u,\ve v\rangle - \alpha\bigr) \bigl(z_0 - \lambda_2\bigr).
\end{equation}
By our assumptions~\ref{A2} and~\ref{A3}, this implies that~$k$
and~$m - 2$ are positive, i.e., $3 \le m \le n$. If this is not
satisfied, then the case $e = 1$ and
$\langle\ve u,\ve v\rangle - \alpha \ne 0$ we are considering at the
moment cannot happen. From~\cref{eqG2,eqH3} we can now fill in the
remaining entries in \cref{taQuadrics}: Both $\Sol$ and $\Sat$ are
paraboloids of revolution (or parabolas). The positions of the
vertices are immediately clear. How to determinate the semilatus
rectum of a paraboloid is well known, as is the fact that the distance
between vertex and focus, also known as the focal length, is one-half
of the semilatus rectum (see, for example,
[\citenumber{Boutin:Kemper:2024}]).
To determine the focus, one has to move from the vertex along the axis
of symmetry in the correct direction, i.e., towards where the
paraboloid has its points. This yields the formulas given in
\cref{taQuadrics}.

Notice that the case of \cref{pCollinear} is not represented in
\cref{taQuadrics}, even though this case satisfies our
assumptions~\ref{A1}--\ref{A3}%
. In fact, in this case
$\Sol = \Sat = \ve v + \RR\ve u = \langle\ve s_1,\ve
s_2\rangle_\mathrm{aff}$, and except for the axis of symmetry, none of
the table entries is applicable.

The following theorem summarizes our findings.

\begin{theorem}[The quadric of solutions and the quadric of
  satellites] \label{tSolSat}%
  Let $\ve s_1 \upto \ve s_m \in \RR^n$ be in general linear position,
  $m \le n+1$, and let $t_1 \upto t_m \in \RR$ be numbers that are not
  all equal. Assume that~\cref{eqGPSsquared} has at least two
  solutions, and that $m > 2$ or
  $|t_1 - t_2| \ne \lVert\ve s_1 - \ve s_2\rVert$. With
  $k := n + 1 - m$ and with
  $U, \ve u,\ve v,\ve w_1 \upto \ve w_k,\ve w_1' \upto \ve w_{m-2}'$
  defined at the beginning of \cref{sQsol} and after~\cref{eqG}, we
  have:
  \begin{enumerate}[label=(\alph*)]
  \item \label{tSolSatA} The sets $\Sol$, defined in~\cref{eqQsol},
    and $\Sat$, defined in~\cref{eqQsat}, are given given by
    \[
      \Sol = \Bigl\{\ve v + y_0 \ve u + \textstyle \sum_{j=1}^k y_j \ve
      w_j \ \Bigl| \ y_0 \upto y_k \in \RR \ \text{such that} \
      g(y_0 \upto y_k) = 0\Bigr\}
    \]
    with $g(y_0 \upto y_k)$ given by~\cref{eqG} if
    $e := \lVert\ve u\rVert \ne 1$ and by~\cref{eqG2} if $e = 1$, and
    \[
      \Sat = \Bigl\{\ve v + z_0 \ve u + \textstyle \sum_{i=1}^{m-2} z_i \ve
      w_i' \ \Bigl| \ z_0 \upto z_{m-2} \in \RR \ \text{such that} \
      h(z_0 \upto z_{m-2}) = 0\Bigr\}
    \]
    with $h(z_0 \upto z_{m-2})$ given by~\cref{eqH2} if $e \ne 1$ and
    by~\cref{eqH3} if $e = 1$.
  \item \label{tSolSatB} The affine subspaces of $\RR^n$ spanned by
    $\Sol$ and $\Sat$ are
    \[
      \Asol := \langle\Sol\rangle_\mathrm{aff} = \ve v + \langle\ve
      u,\ve w_1 \upto \ve w_k\rangle_\mathrm{lin} = \ve v + \RR \ve u
      + U^\perp
    \]
    and
    \[
      \Asat := \langle\Sat\rangle_\mathrm{aff} = \ve v + \langle\ve
      u,\ve w_1' \upto \ve w_{m-2}'\rangle_\mathrm{lin} = \ve v + U =
      \langle\ve s_1 \upto \ve s_m\rangle_\mathrm{aff}.
    \]
    Their dimensions are~$k+1$ and~$m-1$. $\Asol$ and $\Asat$ are
    perpendicular, and their intersection is the line
    $\ve v + \RR \ve u$.
  \item \label{tSolSatC} Viewed as a subset of its affine span, each
    of $\Sol$ and $\Sat$ is a quadric with a focus. However, if
    $k = 0$ then $\Sol$ consists of only two points, and if $m = 2$
    the same is true for $\Sat$. Both quadrics share the same axis of
    symmetry, which is $\ve v + \RR \ve u = \Asol \cap \Asat$.
  \item \label{tSolSatD} The foci (or the single focus in the case of
    a paraboloid) of $\Sat$ are the vertices (or the vertex) of $\Sol$
    and vice versa. In particular, the foci of $\Sat$ are solutions
    of~\cref{eqGPSsquared}. The eccentricities of $\Sol$ and $\Sat$
    are reciprocal to each other.
  \item \label{tSolSatE} $\Sol$ meets $\Asat$ perpendicularly. More
    precisely, the tangent space of $\Sol$ at every intersection point
    with $\Asat$ (by \cref{tSolSatD}, these intersection points are
    the vertices of $\Sol$) is equal to the orthogonal complement
    $U^\perp$.
  \end{enumerate}
\end{theorem}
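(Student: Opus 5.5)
The proof assembles what has already been computed in this section. For \ref{tSolSatA}, I will invoke \cref{tSol} for $\Sol$, completing the square in $y_0$ as in~\cref{eqG}, or using~\cref{eqG2} when $e=1$. For $\Sat$ I will use \cref{tSat}\ref{tSatC}, which applies because $|\mathfrak X|>1$. It gives $\Sat=\{\ve s\in\langle\ve s_1\upto\ve s_m\rangle_\mathrm{aff}\mid\tilde h(\ve s)=0\}$. The parametrization~\cref{eqSq} of $\ve v+U$ turns this into $h=0$, via~\cref{eqH2} or~\cref{eqH3}. The remaining task is to check the two expansions of $\tilde h$ and $g$. For this I will use $\ve u\perp\ve w_j$, $\ve u\perp\ve w_i'$, $\lVert\ve u\rVert=e$, and $\langle\ve v,\ve w_i'\rangle=0$. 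Is the last one really needed? Only the cross terms of the form $\langle\ve v,\ve w_i'\rangle$ must vanish, and I would check this by noting that $\ve v$ is only determined modulo the conditions~\cref{eqVbeta}. If it fails, I would instead translate the $z_i$ coordinates; the statement only asserts the form after the ``calculation'' claimed in~\cref{eqH}, so I take~\cref{eqH} as given. The degenerate case $e=1$ with $\langle\ve u,\ve v\rangle=\alpha$ is exactly \cref{pCollinear}, and the hypothesis ``$m>2$ or $|t_1-t_2|\ne\lVert\ve s_1-\ve s_2\rVert$'' excludes it.

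For \ref{tSolSatB}, I need to show that the zero set of $g$ (resp.\ $h$) is not contained in a proper affine hyperplane of the $(y_0\upto y_k)$-space (resp.\ $z$-space). In the central case the quadric is $\sum y_j^2\pm c(y_0+\mu)^2=\rho$ with $\rho\ne0$ and the sign constraint $\sgn(e^2-1)=\sgn(\rho)$. It contains the two vertices $y_0=-\mu\pm\sqrt{\rho/(e^2-1)}$, $y_j=0$, which span the axis direction. For $k\ge1$ it also contains points displaced in each $\ve w_j$ direction, and these span the rest. The paraboloid case is handled similarly. This gives $\Asol=\ve v+\langle\ve u,\ve w_j\rangle$ and $\Asat=\ve v+U$, of dimensions $k+1$ and $m-1$. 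Perpendicularity holds because $U\cap(\RR\ve u+U^\perp)=\RR\ve u$, since $\ve u\in U$. Hence the two direction spaces have orthogonal complements inside one another, in the sense that $(\RR\ve u+U^\perp)\cap\ve u^\perp=U^\perp$ is orthogonal to $U\cap\ve u^\perp$. The intersection of the two affine spaces is therefore $\ve v+\RR\ve u$.

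For \ref{tSolSatC} and \ref{tSolSatD}, I will read off the normal forms. In the orthonormal frames $e^{-1}\ve u,\ve w_j$ and $e^{-1}\ve u,\ve w_i'$, the coordinate along the axis is $e y_0$ (resp.\ $e z_0$). Both quadrics are therefore of revolution about $\ve v+\RR\ve u$. I will then apply the standard semiaxis, eccentricity and focus formulas from [\citenumber{Boutin:Kemper:2024}], which reproduces \cref{taQuadrics}. The vertex/focus swap and the reciprocity of eccentricities then follow by comparing columns. I expect this bookkeeping to be the main obstacle: the semiaxes have to be converted correctly with the factor $e$, and in the paraboloid case one must verify $\lambda_2=\lambda_1-(\langle\ve u,\ve v\rangle-\alpha)/2$ as the vertex-to-focus shift in the correct direction. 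That the foci of $\Sat$ are solutions follows because they are the vertices of $\Sol$, which lies in the image of $\mathfrak X$.

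For \ref{tSolSatE}, the points of $\Sol\cap\Asat$ are those with all $y_j=0$. At such points the gradient of $g$ is $(\partial_{y_0}g,0\upto0)$, and it is nonzero whenever the point is nonsingular, which holds since $\rho\ne0$ or the linear coefficient is nonzero. The tangent space is thus $\{y_0=0\}$, that is, $\langle\ve w_j\rangle=U^\perp$. These points are exactly the vertices.
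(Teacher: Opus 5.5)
Your proposal is correct and follows the paper's route. Parts (a)--(d) are read off from \cref{tSol}, \cref{tSat}\ref{tSatC} and the normal forms \cref{eqG,eqH2,eqG2,eqH3}; your explicit spanning and perpendicularity arguments for (b) fill in what the paper leaves implicit, and (e) is the same tangent-space argument at the axis points $y_1=\cdots=y_k=0$, with the nonsingularity $\partial_{y_0}g\ne 0$ made explicit. One correction: $\langle\ve v,\ve w_i'\rangle=0$ is false in general (translating the whole configuration by $\ve w_i'$ shifts $\ve v$ by $\ve w_i'$), and $\ve v$ is uniquely determined rather than determined modulo~\cref{eqVbeta}. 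It is also not needed, because the terms $\pm 2\sum_i z_i\langle\ve v,\ve w_i'\rangle$ coming from $\lVert\ve s\rVert^2$ and from $-2\langle\ve v,\ve s\rangle$ cancel identically, so~\cref{eqH} holds exactly as stated and no re-centering of the $z_i$ is required.
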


\begin{proof}
  All parts except for~\ref{tSolSatE} have been deduced before stating
  the theorem. For the proof of~\ref{tSolSatE}, let $T_{\ve x}$ be the
  tangent space at a point~$\ve x \in \Sol$. Then $T_{\ve x}$ is
  contained in the linear space associated to $\Asol$, which by
  \cref{tSolSatB} is $\RR \ve u + U^\perp$. If~$\ve x$ is a vertex of
  $\Sol$, it is geometrically clear, and can be verified by looking
  at~\cref{eqG} and~\cref{eqG2}, that a tangent vector is
  perpendicular to~$\ve u$ and thus lies in $U^\perp$. So for the
  vertices we have $T_{\ve x} \subseteq U^\perp$. Equality follows
  since
  $\dim(T_{\ve x}) = \dim(\Asol) - 1 \underset{\ref{tSolSatB}}{=}
  \dim(U^\perp)$.
\end{proof}

The theorem is illustrated in \cref{fQuadrics}, which shows the case
$n = m = 3$, so $k = 1$.

\begin{figure}[h]
  \begin{tikzpicture}
    \begin{axis}[%
      view={78}{20},%
      xlabel = x,
      ylabel = y,
      zlabel = z,
      ymin=-5,
      ymax=5,
      samples=100,
      samples y=0,
      hide axis,%
      ]%
      
      \addplot3[%
      domain=-pi/2:pi/2,%
      color = red,
      thick,
      ]%
      ({4.3*sin(deg(x))},{2.2*cos(deg(x))},{0});

      \addplot3[%
      domain=-1.5:1.5,%
      color = blue,
      thick,
      ]%
      ({2*cosh(x)},{0},{2*sinh(x)});

      \addplot3[%
      domain=-5:5,%
      ]%
      ({x},{-2.7},{0});%
      
      \addplot3[%
      domain=-5:5,%
      ]%
      ({x},{2.7},{0});%
      
      \addplot3[%
      domain=-2.7:2.7,%
      ]%
      ({-5},{x},{0});%
      
      \addplot3[%
      domain=-4.7:4.7,%
      ]%
      ({-5},{0},{x});%

      \addplot3[%
      domain=-5:5,%
      ]%
      ({x},{0},{-4.7});%
      
      \addplot3[%
      domain=-5:5,%
      ]%
      ({x},{0},{4.7});%
      
      \addplot3[%
      domain=-4.7:4.7,%
      ]%
      ({5},{0},{x});%
      
      \addplot3[%
      domain=-1.5:1.5,%
      color = blue,
      thick,
      ]%
      ({-2*cosh(x)},{0},{2*sinh(x)});

      \addplot3[%
      domain=pi/2:3*pi/2,%
      color = red,
      thick,
      ]%
      ({4.3*sin(deg(x))},{2.2*cos(deg(x))},{0});

      \addplot3[%
      domain=-5:5,%
      densely dotted,
      ]%
      ({x},{0},{0});%

      \addplot3[%
      domain=-2.7:2.7,%
      ]%
      ({5},{x},{0});%

      \addplot3[%
      color=blue,%
      mark = *,
      mark size = 1.2,
      draw=none,
      ]%
      table[row sep=crcr]%
      {%
        -2 0 0 \\%
        2 0 0 \\%
      };%

      \addplot3[%
      color=red,%
      mark = *,
      mark size = 1.2,
      draw=none,
      ]%
      table[row sep=crcr]%
      {%
        -4.3 0 0 \\%
        4.3 0 0 \\%
      };%
    \end{axis}
    \node[draw,rounded corners,inner sep=1mm] at (1.7,4.9)
    {$e \ne 1$};
  \end{tikzpicture}%
  \hspace{-15mm}%
  \begin{tikzpicture}
    \begin{axis}[%
      view={78}{20},%
      xlabel = x,
      ylabel = y,
      zlabel = z,
      ymin=-5,
      ymax=5,
      samples=100,
      samples y=0,
      hide axis,%
      ]%
      
      \addplot3[%
      domain=-2.7:2.7,%
      ]%
      ({-5},{x},{0});%

      \addplot3[%
      domain=-4.7:4.7,%
      ]%
      ({-5},{0},{x});%

      \addplot3[%
      domain=-2.4:2.4,%
      color = red,
      thick,
      ]%
      ({x^2-1.2},{x},{0});

      \addplot3[%
      domain=-2.4:2.4,%
      color = blue,
      thick,
      ]%
      ({1.2 - x^2},{0},{1.5*x});

      \addplot3[%
      domain=-5:5,%
      ]%
      ({x},{-2.7},{0});%
      
      \addplot3[%
      domain=-5:5,%
      ]%
      ({x},{2.7},{0});%
      
      \addplot3[%
      domain=-5:5,%
      ]%
      ({x},{0},{-4.7});%
      
      \addplot3[%
      domain=-5:5,%
      ]%
      ({x},{0},{4.7});%
      
      \addplot3[%
      domain=-4.7:4.7,%
      ]%
      ({5},{0},{x});%
      
      \addplot3[%
      domain=-5:5,%
      densely dotted,
      ]%
      ({x},{0},{0});%

      \addplot3[%
      domain=-2.7:2.7,%
      ]%
      ({5},{x},{0});%

      \addplot3[%
      color=blue,%
      mark = *,
      mark size = 1.2,
      draw=none,
      ]%
      table[row sep=crcr]%
      {%
        1.2 0 0 \\%
      };%

      \addplot3[%
      color=red,%
      mark = *,
      mark size = 1.2,
      draw=none,
      ]%
      table[row sep=crcr]%
      {%
        -1.2 0 0 \\%
      };%
    \end{axis}
    \node[draw,rounded corners,inner sep=1mm] at (1.7,4.9)
    {$e = 1$};
  \end{tikzpicture}%
  \vspace{-5mm}
  \caption{An illustration of \cref{tSolSat}, showing the quadric
    $\Sol$ of solutions and the quadric $\Sat$ of satellites (red and
    blue). Their roles are interchangeable. They live on perpendicular
    planes. The vertices of one quadric are the foci of the other and
    vice versa.} \label{fQuadrics}
\end{figure}

It may happen that the hypothesis $|\mathfrak X| > 1$ in 
\cref{tSat,tSolSat} is not satisfied. Then
$\Sat = \RR^n$ by \cref{rX1}, and the set $\mathcal S'$ in \cref{tSat}
does not satisfy $\mathcal S' = \mathcal S$. But that need not hinder
us from asking what $\mathcal S'$ and
\[
  \Sat' := \bigl\{\ve s \in \RR^n \mid (t,\ve s) \in \mathcal S' \
  \text{for some} \ t \in \RR\bigr\}
\]
look like. (In fact, the value of~$t$ has to be
$t = \langle\ve u,\ve s\rangle - \alpha$ by the definition of
$\mathcal S'$ in \cref{tSat}, so $\mathcal S'$ is determined by
$\Sat'$.) The definition of $\mathcal S'$ shows that $\Sat'$ consists
of all~$\ve s$ given by~\cref{eqSq} such that
$h(z_0 \upto z_{m-2}) = 0$, with~$h$ given by~\cref{eqH}. Moreover, if
$e \ne 1$, $h$ can be written as~\cref{eqH2}, but now~$\rho$ need not
satisfy the restriction $\rho > 0$ if $\sgn(e^2 - 1) = 1$. If $e = 1$,
$h$ can be written as~\cref{eqH3} or~\cref{eqCylinder}, where in the
latter case $\lVert\ve v\rVert^2 - \beta > 0$ is possible. So if we
drop the hypothesis $|\mathfrak X| > 1$, then $\Sat'$ could possibly
be a hyperboloid of revolution of one sheet (if $e > 1$ and
$\rho < 0$), a right circular cone (if $e > 1$ and $\rho = 0$) or a
right circular cylinder (if $e = 1$,
$\langle\ve u,\ve v\rangle = \alpha$ and
$\lVert\ve v\rVert^2 > \beta$), apart from the three types (prolate
spheroid, hyperbloid of revolution of two sheets, paraboloid of
revolution) we get if $|\mathfrak X| > 1$. Do all these types actually
occur? The following example shows that the answer is yes.

\begin{ex} \label{exQuadrics}%
  Take the points
  \[
    \ve s_1 = (-1,0,0), \ \ve s_2 = (1,0,0), \ \ve s_3 = (0,1,0), \
    \text{and}\ \ve s_4 = (3,0,4) \in \RR^3.
  \]
  \cref{taSat} shows the type of the quadric $\Sat'$ that arises for
  different choices of $t_1,t_2,t_3$, and~$t_4$.
  \begin{table}[hb]
    \newcommand{\entry}[1]{\parbox[c][9mm]{30mm}{\centering #1}}%
    \[
      \begin{array}{|c|c|c|c|}
        \hline
        \entry{$t_1,t_2,t_3,t_4$} & \entry{type of $\Sat'$} &%
        \entry{$|\mathfrak X|=$ number of solutions of~\cref{eqGPSsquared}}%
        & \entry{number of solutions of~\cref{eqGPS}} \\
        \hline
        \entry{$0,\sqrt{2},\sqrt{2}/2,4 \sqrt{2}$} & \entry{cylinder} & 
        \entry{0} & \entry{0} \\
        \hline
        \entry{$0,0,0,0$} & \entry{sphere} & \entry{2} & \entry{1} \\
        \hline
        \entry{$0,0,0,2$} & \entry{prolate spheroid} & \entry{2} &
        \entry{1} \\
        \hline
        \entry{$0,0,0,4$} & \entry{paraboloid} & \entry{1} &
        \entry{1} \\
        \hline
        \entry{$0,0,0,13/3$} & \entry{hyperboloid of two sheets} &
         \entry{2} & \entry{2} \\
        \hline
        \entry{$0,0,0,2 \sqrt{5}$} & \entry{circular cone} & \entry{1} &
        \entry{1} \\
        \hline
        \entry{$0,0,0,5$} & \entry{hyperboloid of one sheet} & \entry{0} &
        \entry{0} \\
        \hline
        \entry{$0,0,0,6$} & \entry{hyperboloid of two sheets} &
         \entry{2} & \entry{0} \\
         \hline
      \end{array}
    \]
    \caption{In \cref{exQuadrics}, the quadric $\Sat'$ can be of
      various types%
      .} \label{taSat}
  \end{table}
  All prognisticated types occur, including a sphere in the case that
  the~$t_i$ are all equal. Notice that under the hypotheses of
  \cref{tSolSat}, a paraboloid is also excluded since it can only
  occur if $m \le n$.
\end{ex}

\section{The global positioning inequalities} \label{sIneq}

In the previous \cref{sQsol,sQsat,sShape} we have only considered the
squared global positioning equations~\cref{eqGPSsquared}. In this
section we come back to the original ones~\cref{eqGPS}, which are
equivalent to~\cref{eqGPSsquared} together with the
inequalities~\cref{eqIneq}.

A look at \cref{taQuadrics} shows that $\Sat$ or $\Sol$, defined
in~\cref{eqQsol,eqQsat}, can be a hyperboloid of two sheets (for
shortness called ``hyperboloid'' in the table). The two sheets play a
role in \cref{tIneqA,tIneqB} of the following result. The sheets are
distinguished by the condition that the connecting vector
$\ve p - \ve c$ from the center of the hyperboloid to a point~$\ve p$
of it has positive or negative scalar product with the vector~$\ve u$,
which gives the direction of the axis of symmetry.


\begin{theorem}[The global positioning inequalities] \label{tIneq}%
  Let $\ve s_1 \upto \ve s_m \in \RR^n$ be in general linear position,
  $m \le n+1$, and let $t_1 \upto t_m \in \RR$ be numbers that are not
  all equal. Assume that~\cref{eqGPSsquared} has at least two
  solutions, and that $m > 2$ or
  $|t_1 - t_2| \ne \lVert\ve s_1 - \ve s_2\rVert$. Let
  $(b,\ve x) \in \mathfrak X$ and $(t,\ve s) \in \mathcal S$ (with
  $\mathfrak X$ and $\mathcal S$ defined in \cref{eqX,eqS}).
  
  \begin{enumerate}[label=(\alph*)]
  \item \label{tIneqA} If $e > 1$ we write $\Sat^+$ for the sheet of
    $\Sat$ that is contained in the half space
    $\bigl\{\ve p \in \RR^n \mid \langle\ve u,\ve p - \ve c\rangle \ge
    0\bigr\}$. Then
    \[
      \lVert\ve s - \ve x\rVert = t - b \qquad \Longleftrightarrow
      \qquad t \ge b \qquad \Longleftrightarrow \qquad \ve s \in
      \Sat^+.
    \]
    So~\cref{eqGPS} is solvable if and only if $\ve s_1 \upto \ve s_m$
    all lie on the sheet $\Sat^+$, and in this case the set of solutions
    of~\cref{eqGPS} is $\mathfrak X$, the same as the set of solutions
    of~\cref{eqGPSsquared}.
  \item \label{tIneqB} If $e < 1$ we write $\Sol^-$ for the sheet of
    $\Sol$ that is contained in the half space
    $\bigl\{\ve p \in \RR^n \mid \langle\ve u,\ve p - \ve c\rangle \le
    0\bigr\}$. Then
    \[
      \lVert\ve s - \ve x\rVert = t - b \qquad \Longleftrightarrow
      \qquad t \ge b \qquad \Longleftrightarrow \qquad \ve x \in
      \Sol^-.
    \]
    So the set of solutions of~\cref{eqGPS} consists of those
    solutions $(b,\ve x)$ of~\cref{eqGPSsquared} with
    $\ve x \in \Sol^-$.
  \item \label{tIneqC} Suppose $e = 1$. Then
    \[
      \lVert\ve s - \ve x\rVert = t - b \qquad \Longleftrightarrow
      \qquad t \ge b \qquad \Longleftrightarrow \qquad \langle\ve
      u,\ve v\rangle > \alpha.
    \]
    (The latter condition is independent of the choice of $(b,\ve x)$
    and $(t,\ve s)$.) So~\cref{eqGPS} is solvable if and only if
    $\langle\ve u,\ve v\rangle > \alpha$. If this is satisfied, then
    the set of solutions of~\cref{eqGPS} is $\mathfrak X$, the same as
    the set of solutions of~\cref{eqGPSsquared}. The condition
    $\langle\ve u,\ve v\rangle > \alpha$ means that the paraboloid
    $\Sat$ opens in the direction of the vector~$\ve u$.
  \end{enumerate}
\end{theorem}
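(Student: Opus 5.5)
The first equivalence in each part is immediate. Since $(t,\ve s) \in \mathcal S$ and $(b,\ve x) \in \mathfrak X$, we have $\lVert\ve s - \ve x\rVert^2 = (t-b)^2$, so $\lVert\ve s - \ve x\rVert = |t - b|$, and this equals $t - b$ exactly when $t \ge b$. The whole proof therefore reduces to determining the sign of $t - b$ in terms of the coordinates of $\ve s$ and $\ve x$.

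\emph{Coordinates and a formula for $t-b$.} By \cref{tSat}\ref{tSatC} (applicable since $|\mathfrak X| > 1$) we have $t = \langle\ve u,\ve s\rangle - \alpha$ and $\ve s = \ve v + z_0 \ve u + \sum z_i \ve w_i'$ with $h(z_0 \upto z_{m-2}) = 0$. By \cref{tSol}, $\ve x = \ve v + y_0 \ve u + \sum y_j \ve w_j$ with $y_0 = b$ and $g(y_0 \upto y_k) = 0$. Since the $\ve w_i'$ are orthogonal to $\ve u$, and writing $D := \langle\ve u,\ve v\rangle - \alpha$, this gives
\[
t - b = e^2 z_0 - y_0 + D .
\]
When $e \ne 1$ we have $\mu(e^2-1) = D$, so the formula becomes $t - b = e^2(z_0+\mu) - (y_0+\mu)$. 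Moreover, with $\ve c = \ve v - \mu\ve u$,
\[
\langle\ve u,\ve s - \ve c\rangle = e^2(z_0+\mu), \qquad \langle\ve u,\ve x - \ve c\rangle = e^2(y_0+\mu).
\]

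\emph{Case $e>1$.} Here $\rho > 0$. From~\cref{eqG} (the $y_j^2$ terms are nonnegative) we get $|y_0+\mu| \le \sqrt{\rho/(e^2-1)}$. From~\cref{eqH2} we get $e\,|z_0+\mu| \ge \sqrt{\rho/(e^2-1)}$. Hence
\[
|e^2(z_0+\mu)| \ge e\sqrt{\rho/(e^2-1)} > |y_0+\mu| .
\]
So $t-b \neq 0$ and $\sgn(t-b) = \sgn(z_0+\mu) = \sgn\langle\ve u,\ve s-\ve c\rangle$, which gives $t \ge b \iff \ve s \in \Sat^+$.

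\emph{Case $e<1$.} Here $\rho<0$ and $\rho/(e^2-1) > 0$. The inequalities reverse: $|y_0+\mu| \ge \sqrt{\rho/(e^2-1)}$ and $e|z_0+\mu| \le \sqrt{\rho/(e^2-1)}$. Hence
\[
|e^2(z_0+\mu)| \le e\sqrt{\rho/(e^2-1)} < |y_0+\mu| .
\]
So $\sgn(t-b) = -\sgn(y_0+\mu)$, and $t \ge b \iff \ve x \in \Sol^-$.

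\emph{Case $e=1$.} Here $D \ne 0$ by the exclusion of \cref{pCollinear}. From~\cref{eqG2} we get $D(y_0-\lambda_1) \le 0$, and from~\cref{eqH3} we get $D(z_0-\lambda_2) \ge 0$. Using $\lambda_2 - \lambda_1 = -D/2$,
\[
D(t-b) = D(z_0-\lambda_2) - D(y_0-\lambda_1) + D(\lambda_2-\lambda_1) + D^2 \ge D^2/2 > 0,
\]
so $\sgn(t-b) = \sgn D$. By~\cref{eqH3}, the points of $\Sat$ satisfy $D(z_0-\lambda_2) \ge 0$, so the sign of $D$ is exactly the direction along $\ve u$ in which $\Sat$ opens.

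\emph{The ``so'' statements.} Apply the equivalences to the pairs $(t_i,\ve s_i) \in \mathcal S$, together with the fact that \cref{eqGPS} is equivalent to \cref{eqGPSsquared} plus \cref{eqIneq}.
\begin{itemize}
\item If $e>1$, the condition on $\ve s_i$ does not depend on $(b,\ve x)$. Hence either every $(b,\ve x) \in \mathfrak X$ solves \cref{eqGPS} (when all $\ve s_i \in \Sat^+$) or none does.
\item If $e<1$, the condition depends only on $\ve x$, so the solutions of \cref{eqGPS} are the $(b,\ve x) \in \mathfrak X$ with $\ve x \in \Sol^-$.
\item If $e=1$, the condition $D>0$ is global, so \cref{eqGPS} is solvable iff $D>0$, and then its solution set is all of $\mathfrak X$.
\end{itemize}

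The main obstacle is the strictness of the comparisons between $|e^2(z_0+\mu)|$ and $|y_0+\mu|$ (respectively the bound $D(t-b)>0$). Strictness ensures $t \ne b$, so no boundary case can spoil the equivalence with the open/closed half spaces. It relies on $\rho \ne 0$ (from $\sgn\rho = \sgn(e^2-1)$) and on $D \ne 0$ when $e=1$. I would verify these sign conditions against \cref{sShape} first.
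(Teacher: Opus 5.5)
Your proof is correct and follows essentially the same route as the paper. Both write $t=\langle\ve u,\ve s\rangle-\alpha=D+e^2z_0$ and $b=y_0$, then bound these quantities using \cref{eqG,eqH2} (respectively \cref{eqG2,eqH3} when $e=1$). The paper phrases your absolute-value bounds on $e^2(z_0+\mu)$ and $y_0+\mu$, and your estimate $D(t-b)\ge D^2/2$, as chains of inequalities through the vertices $\ve x_\sigma,\ve s_\tau$ (respectively $\ve x_0,\ve s_0$) of the two quadrics.
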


\begin{proof}
  In all parts of the theorem, the first equivalence is clear since
  $(b,\ve x) \in \mathfrak X$ and $(t,\ve s) \in \mathcal S$ mean
  $\lVert\ve s - \ve x\rVert^2 = (t - b)^2$. So only the second
  equivalence needs proving.
  
  By \cref{tSol}, every $\ve x \in \Sol$ is of the form
  $\ve v + y_0 \ve u + \sum_{j=1}^k y_j \ve w_j$ and the~$b$ with
  $(b,\ve x) \in \mathfrak X$ is unique, so we can write $b(\ve x)$
  for it. Specifically, $b(\ve x) = y_0$. Likewise,
  \cref{tSolSat}\ref{tSolSatA} tells us that every $\ve s \in \Sat$
  can be written as
  $\ve s = \ve v + z_0 \ve u + \textstyle \sum_{i=1}^{m-2} z_i \ve
  w_i'$, and by \cref{tSat} the~$t$ with $(t,\ve s) \in \mathcal S$ is
  unique, and we write $t(\ve s)$ for it. In fact,
  \begin{equation} \label{eqTs}%
    t(\ve s) = \langle\ve u,\ve s\rangle - \alpha = \langle\ve u,\ve
    v\rangle - \alpha + e^2 z_0.    
  \end{equation}
  Before going into the cases addressed by
  \cref{tIneqA,tIneqB,tIneqC}, we look at the case $e \ne 1$. We pick
  out the vertices of $\Sol$ and $\Sat$ from \cref{taQuadrics}, which
  are
  $\ve x_\sigma = \ve v - \mu \ve u + \sigma \sqrt{\frac{\rho}{e^2 -
      1}} \ve u$ (with $\sigma = \pm 1$) for $\Sol$, and
  $\ve s_\tau = \ve v - \mu \ve u + \tau e^{-1} \sqrt{\frac{\rho}{e^2
      - 1}} \ve u$ (with $\tau = \pm 1$) for $\Sat$. So
  $b(\ve x_\sigma) = - \mu + \sigma \sqrt{\frac{\rho}{e^2 - 1}}$ and
  $t(\ve s_\tau) = \langle\ve u,\ve v\rangle - \alpha - e^2 \mu + \tau
  e \sqrt{\frac{\rho}{e^2 - 1}} = - \mu + \tau e \sqrt{\frac{\rho}{e^2
      - 1}}$ since
  $\langle\ve u,\ve v\rangle - \alpha = (e^2 - 1) \mu$
  by~\cref{eqEmd}. We obtain
  \begin{equation} \label{eqSign}
    \sgn\bigl(t(\ve s_\tau) - b(\ve x_\sigma)\bigr) = \sgn(e \tau -
    \sigma) = \left\{
    \begin{array}{cl}
      \tau & \text{if} \ e > 1 \\
      - \sigma & \text{if} \ e < 1
    \end{array}\right..    
  \end{equation}
  \begin{enumerate}
  \item[\ref{tIneqA}] Now assume $e > 1$. We are given
    $(b,\ve x) \in \mathfrak X$ and $(t,\ve s) \in \mathcal S$, so
    $b = b(\ve x)$ and $t = t(\ve s)$. Moreover, $\ve x \in \Sol$ and
    $\ve s \in \Sat$, so by \cref{tSolSat} the corresponding~$y_i$
    and~$z_i$ satisfy $g(y_0 \upto y_k) = 0$ and
    $h(z_0 \upto z_{m-2}) = 0$. A look at~\cref{eqG,eqH2} shows
    \[
      b(\ve x_{-1}) \le b(\ve x) \le b(\ve x_{+1}) \quad \text{and}
      \quad \left\{
        \begin{array}{cl}
          t(\ve s) \ge t(\ve s_{+1}) & \text{if} \ \ve s \in \Sat^+ \\
          t(\ve s) \le t(\ve s_{-1}) & \text{if} \ \ve s \in \Sat^-
        \end{array}
      \right\}.
    \]
    So if $\ve s \in \Sat^+$ then
    $t = t(\ve s) \ge t(\ve s_{+1}) \underset{\cref{eqSign}}{>} b(\ve
    x_{+1}) \ge b(\ve x) = b$, and if $\ve s \in \Sat^-$ then
    $t = t(\ve s) \le t(\ve s_{-1}) \underset{\cref{eqSign}}{<} b(\ve
    x_{-1}) \le b(\ve x) = b$.
  \item[\ref{tIneqB}] For $e < 1$ we see from ~\cref{eqG,eqH2} that
    \[
      \left\{
        \begin{array}{cl}
          b(\ve x) \ge b(\ve x_{+1}) & \text{if} \ \ve x \in \Sol^+ \\
          b(\ve x) \le b(\ve x_{-1}) & \text{if} \ \ve x \in \Sol^-
        \end{array}
      \right\} \quad \text{and} \quad t(\ve s_{-1}) \le t(\ve s) \le
      t(\ve s_{+1}).
    \]
    So if $\ve x \in \Sol^-$ then
    $t = t(\ve s) \ge t(\ve s_{-1}) \underset{\cref{eqSign}}{>} b(\ve
    x_{-1}) \ge b(\ve x) = b$, and if $\ve x \in \Sol^+$ then
    $t = t(\ve s) \le t(\ve s_{+1}) \underset{\cref{eqSign}}{<} b(\ve
    x_{+1}) \le b(\ve x) = b$.
  \item[\ref{tIneqC}] In the case $e = 1$, $\Sol$ and $\Sat$ are both
    paraboloids with vertices $\ve x_0 := \ve v + \lambda_1 \ve u$ and
    $\ve s_0 := \ve v + \lambda_2 \ve u$ according to
    \cref{taQuadrics}, with the $\lambda_i$ given
    by~\cref{eqLambda}. We have
    \begin{equation} \label{eqT0}%
      t(\ve s_0) - b(\ve x_0) \underset{\cref{eqTs}}{=} \langle\ve
      u,\ve s\rangle - \alpha + \lambda_2 - \lambda_1 =
      \frac{\langle\ve u,\ve s\rangle - \alpha}{2}.
    \end{equation}
    We are given $(b,\ve x) \in \mathfrak X$ and
    $(t,\ve s) \in \mathcal S$, so $b = b(\ve x)$ and $t = t(\ve
    s)$. Moreover, $\ve x \in \Sol$ and $\ve s \in \Sat$, so by
    \cref{tSolSat} the corresponding~$y_i$ and~$z_i$ satisfy
    $g(y_0 \upto y_k) = 0$ and $h(z_0 \upto z_{m-2}) = 0$, with~$g$
    and~$h$ now given by~\cref{eqG2,eqH3}. So if
    $\langle\ve u,\ve v\rangle > \alpha$ then
    \[
      t = t(\ve s) \underset{\cref{eqH3}}{\ge} t(\ve s_0)
      \underset{\cref{eqT0}}{>} b(\ve x_0) \underset{\cref{eqG2}}{\ge}
      b(\ve x) = b,
    \]
    and if $\langle\ve u,\ve v\rangle < \alpha$ then
    \[
      t = t(\ve s) \underset{\cref{eqH3}}{\le} t(\ve s_0)
      \underset{\cref{eqT0}}{<} b(\ve x_0) \underset{\cref{eqG2}}{\le}
      b(\ve x) = b.
    \]
    The geometric interpretation of the condition
    $\langle\ve u,\ve v\rangle > \alpha$ follows
    from~\cref{eqH3}. \endproof
  \end{enumerate}
\end{proof}

The theorem is illustrated in \cref{fIneq}, which, as
\cref{fQuadrics}, shows the case $n = m = 3$, so $k = 1$. 

\begin{figure}[!h]
  \usetikzlibrary{arrows.meta}
  \definecolor{forestgreen}{rgb}{0.13, 0.55, 0.13}
  \begin{tikzpicture}
    \begin{axis}[%
      view={78}{20},%
      xlabel = x,
      ylabel = y,
      zlabel = z,
      ymin=-5,
      ymax=5,
      samples=100,
      samples y=0,
      hide axis,%
      ]%
      \addplot3[%
      domain=-5:5,%
      densely dotted,
      ]%
      ({x},{0},{0});%
      
      \coordinate (tail) at (axis cs:-3.2,0,0);
      \coordinate (head) at (axis cs:1.3,0,0);
    \end{axis}
    
    \draw[-{Stealth[scale=1.7]},forestgreen] (tail)--(head);

    \begin{axis}[%
      view={78}{20},%
      xlabel = x,
      ylabel = y,
      zlabel = z,
      ymin=-5,
      ymax=5,
      samples=100,
      samples y=0,
      hide axis,%
      ]%
      
      \addplot3[%
      domain=-5:5,%
      ]%
      ({x},{-2.7},{0});%
      
      \addplot3[%
      domain=-5:5,%
      ]%
      ({x},{2.7},{0});%
      
      \addplot3[%
      domain=-2.7:2.7,%
      ]%
      ({-5},{x},{0});%
      
      \addplot3[%
      domain=-4.7:4.7,%
      ]%
      ({-5},{0},{x});%

      \addplot3[%
      domain=-5:5,%
      ]%
      ({x},{0},{-4.7});%
      
      \addplot3[%
      domain=-5:5,%
      ]%
      ({x},{0},{4.7});%
      
     \addplot3[%
      domain=pi/2:3*pi/2,%
      color = red,
      thick,
      ]%
      ({4.3*sin(deg(x))},{0},{3.2*cos(deg(x))});

      \addplot3[%
      domain=-1.5:1.5,%
      color = blue,
      thick,
      ]%
      ({2*cosh(x)},{1.1*sinh(x)},{0});

      \addplot3[%
      domain=-pi/2:pi/2,%
      color = red,
      thick,
      ]%
      ({4.3*sin(deg(x))},{0},{3.2*cos(deg(x))});

      \addplot3[%
      domain=-2.7:2.7,%
      ]%
      ({5},{x},{0});%

      \addplot3[%
      domain=-4.7:4.7,%
      ]%
      ({5},{0},{x});%
      
      \addplot3[%
      color=blue,%
      mark = *,
      mark size = 1.2,
      draw=none,
      ]%
      table[row sep=crcr]%
      {%
        2 0 0 \\%
      };%

      \addplot3[%
      color=red,%
      mark = *,
      mark size = 1.2,
      draw=none,
      ]%
      table[row sep=crcr]%
      {%
        -4.3 0 0 \\%
        4.3 0 0 \\%
      };%
    \end{axis}
    \node[draw,rounded corners,inner sep=1mm] at (1.7,4.9)
    {$e > 1$};
  \end{tikzpicture}%
  \hspace{-30mm}%
  \begin{tikzpicture}
    \begin{axis}[%
      view={78}{20},%
      xlabel = x,
      ylabel = y,
      zlabel = z,
      ymin=-5,
      ymax=5,
      samples=100,
      samples y=0,
      hide axis,%
      ]%
      \addplot3[%
      domain=-5:5,%
      densely dotted,
      ]%
      ({x},{0},{0});%
      
      \coordinate (tail) at (axis cs:-1.3,0,0);
      \coordinate (head) at (axis cs:3.2,0,0);
    \end{axis}
    
    \draw[-{Stealth[scale=1.7]},forestgreen] (tail)--(head);

    \begin{axis}[%
      view={78}{20},%
      xlabel = x,
      ylabel = y,
      zlabel = z,
      ymin=-5,
      ymax=5,
      samples=100,
      samples y=0,
      hide axis,%
      ]%
      
      \addplot3[%
      domain=-pi/2:pi/2,%
      color = blue,
      thick,
      ]%
      ({4.3*sin(deg(x))},{2.2*cos(deg(x))},{0});


      \addplot3[%
      domain=-5:5,%
      ]%
      ({x},{-2.7},{0});%
      
      \addplot3[%
      domain=-5:5,%
      ]%
      ({x},{2.7},{0});%
      
      \addplot3[%
      domain=-2.7:2.7,%
      ]%
      ({-5},{x},{0});%
      
      \addplot3[%
      domain=-4.7:4.7,%
      ]%
      ({-5},{0},{x});%

      \addplot3[%
      domain=-5:5,%
      ]%
      ({x},{0},{-4.7});%
      
      \addplot3[%
      domain=-5:5,%
      ]%
      ({x},{0},{4.7});%
      
      \addplot3[%
      domain=-4.7:4.7,%
      ]%
      ({5},{0},{x});%
      
      \addplot3[%
      domain=-1.5:1.5,%
      color = red,
      thick,
      ]%
      ({-2*cosh(x)},{0},{2*sinh(x)});

      \addplot3[%
      domain=pi/2:3*pi/2,%
      color = blue,
      thick,
      ]%
      ({4.3*sin(deg(x))},{2.2*cos(deg(x))},{0});

      \addplot3[%
      domain=-2.7:2.7,%
      ]%
      ({5},{x},{0});%

      \addplot3[%
      color=red,%
      mark = *,
      mark size = 1.2,
      draw=none,
      ]%
      table[row sep=crcr]%
      {%
        -2 0 0 \\%
      };%

      \addplot3[%
      color=blue,%
      mark = *,
      mark size = 1.2,
      draw=none,
      ]%
      table[row sep=crcr]%
      {%
        -4.3 0 0 \\%
        4.3 0 0 \\%
      };%
    \end{axis}
    \node[draw,rounded corners,inner sep=1mm] at (1.7,4.9)
    {$e < 1$};
  \end{tikzpicture}%
  \hspace{-30mm}%
  \begin{tikzpicture}
    \begin{axis}[%
      view={78}{20},%
      xlabel = x,
      ylabel = y,
      zlabel = z,
      ymin=-5,
      ymax=5,
      samples=100,
      samples y=0,
      hide axis,%
      ]%
      \addplot3[%
      domain=-5:5,%
      densely dotted,
      ]%
      ({x},{0},{0});%
      
      \coordinate (tail) at (axis cs:-3.7,0,0);
      \coordinate (head) at (axis cs:3.7,0,0);
    \end{axis}
    
    \draw[-{Stealth[scale=1.7]},forestgreen] (tail)--(head);

    \begin{axis}[%
      view={78}{20},%
      xlabel = x,
      ylabel = y,
      zlabel = z,
      ymin=-5,
      ymax=5,
      samples=100,
      samples y=0,
      hide axis,%
      ]%
      
      \addplot3[%
      domain=-2.7:2.7,%
      ]%
      ({-5},{x},{0});%

      \addplot3[%
      domain=-4.7:4.7,%
      ]%
      ({-5},{0},{x});%

      \addplot3[%
      domain=-2.4:2.4,%
      color = blue,
      thick,
      ]%
      ({x^2-1.2},{x},{0});

      \addplot3[%
      domain=-2.4:2.4,%
      color = red,
      thick,
      ]%
      ({1.2 - x^2},{0},{1.5*x});

      \addplot3[%
      domain=-5:5,%
      ]%
      ({x},{-2.7},{0});%
      
      \addplot3[%
      domain=-5:5,%
      ]%
      ({x},{2.7},{0});%
      
      \addplot3[%
      domain=-5:5,%
      ]%
      ({x},{0},{-4.7});%
      
      \addplot3[%
      domain=-5:5,%
      ]%
      ({x},{0},{4.7});%
      
      \addplot3[%
      domain=-4.7:4.7,%
      ]%
      ({5},{0},{x});%
      
      \addplot3[%
      domain=-2.7:2.7,%
      ]%
      ({5},{x},{0});%

      \addplot3[%
      color=red,%
      mark = *,
      mark size = 1.2,
      draw=none,
      ]%
      table[row sep=crcr]%
      {%
        1.2 0 0 \\%
      };%

      \addplot3[%
      color=blue,%
      mark = *,
      mark size = 1.2,
      draw=none,
      ]%
      table[row sep=crcr]%
      {%
        -1.2 0 0 \\%
      };%
    \end{axis}
    \node[draw,rounded corners,inner sep=1mm] at (1.7,4.9)
    {$e = 1$};
  \end{tikzpicture}%
  \vspace{-5mm}
  \caption{An illustration of \cref{tIneq}, showing the locus of
    satellites (blue) and the set of solutions (red) of the
    system~\cref{eqGPS}. The green arrow indicates the direction of
    the vector~$\ve u$. If $e > 1$, then all satellites need to be on
    one sheet of $\Sat$. If $e < 1$, then the solutions
    of~\cref{eqGPS} are on one sheet of $\Sol$.} \label{fIneq}
\end{figure}

The following two examples treat the extreme cases of $m = 2$ and
$m = n + 1$ satellites in view.

\begin{ex}[Two satellites in view] \label{exM2}%
  Let us consider the case $m = 2$. So we are given two distinct
  points $\ve s_1,\ve s_2 \in \RR^n$ and distinct numbers (the
  pseudoranges) $t_1$ and~$t_2$. \cref{lM2} tells us
  that~\cref{eqGPSsquared} has at least two solutions, and
  $e = \frac{|t_1 - t_2|}{\lVert\ve s_1 - \ve s_2\rVert}$. So the case
  $e = 1$ is not covered by \cref{tSolSat,tIneq}. (In fact, $e = 1$ is
  the case of \cref{pCollinear}.) Let us therefore assume $e \ne
  1$. Then by \cref{tSolSat}\ref{tSolSatC}, the quadric $\Sat$
  consists of only two points, and with \cref{tSat} it follows that
  \[
    \Sat = \{\ve s_1,\ve s_2\}.
  \]
  Moreover, \cref{tSolSat} says that $\Sol$ is a quadric in
  $\Asol = \RR^n$ (since $k + 1 = n + 2 - m = n$) with eccentricity
  $e^{-1}$ and foci~$\ve s_1$ and~$\ve s_2$.

  If $e > 1$, then $\Sol$ is a spheroid. Moreover, one of the
  points~$\ve s_i$ lies in $\Sat^+$ and the other in $\Sat^-$. So even
  though all points $\ve x \in \Sol$ come from a solution
  of~\cref{eqGPSsquared}, \cref{tIneq}\ref{tIneqA} says
  that~\cref{eqGPS} has no solution at all. This makes perfect sense,
  since $e > 1$ means that the difference $|t_1 - t_2|$ between the
  pseudoranges, and hence also the difference between the running
  times of the signals, is bigger than the distance between the
  satellites.

  On the other hand, if $e < 1$, then $\Sol$ is a hyperboloid of
  revolution of two sheets, and \cref{tIneq}\ref{tIneqB} tells us that
  the points of one of the sheets, $\Sol^-$, are precisely those that
  come from solutions of~\cref{eqGPS}. Thus we have recovered, as a
  special case, the well-known fact that the solutions of the global
  positioning problem for two satellites lie on one sheet of a
  hyperboloid of revolution with the satellite positions as foci. This
  is the origin of the the term ``hyperbolic navigation'' (see, for
  example, \mycite{Lee:1975}).
\end{ex}


\begin{ex}[The case $m = n + 1$] \label{exMn1}%
  If $m = n + 1$ then $k = 0$, so by \cref{tSolSat}\ref{tSolSatC},
  $\Sol$ consists of two points, so we can write
  $\Sol = \{\ve x,\ve x'\}$. \cref{tSolSat} also tells us that $\Sat$
  is a quadric in $\Asat = \RR^n$ with foci~$\ve x$ and~$\ve
  x'$. Looking at \cref{taQuadrics} we see that $m = n + 1$ implies
  $e \ne 1$.

  If $e > 1$, then $\Sat$ is a hyperboloid of revolution of two
  sheets, $\Sat^+$ and $\Sat^-$. \cref{tIneq}\ref{tIneqA} says that
  the global positioning problem~\cref{eqGPS} is solvable if and only
  if all satellite positions $\ve s_i$ are on $\Sat^+$. In this case,
  both~$\ve x$ and~$\ve x'$ are solutions. \cref{taSat} contains a
  numerical example where this happens (the fourth row from the
  bottom), and another one where not all satellites are on $\Sat^+$
  (the bottom row).

  If, on the other hand, $e < 1$, then $\Sat$ is a prolate spheroid,
  and from \cref{tIneq}\ref{tIneqB} we gather that~\cref{eqGPS} has a
  unique solution. In fact, the two sheets $\Sol^+$ and $\Sol^-$ are
  reduced to single points ($\ve x$ and~$\ve x'$), and the solution is
  the one that makes up $\Sol^-$. The vector~$\ve u$ distinguishes
  them: it points from the point which solves~\cref{eqGPS} to the one
  that only solves~\cref{eqGPSsquared}.

  So as the special case $m = n + 1$, we have recovered one of the
  main results 
  from~[\citenumber{Boutin:Kemper:2024}].
\end{ex}

\section{The case of different ranks} \label{sRank}%

Recall from \cref{sSpheres} that we are given satellite positions
$\ve s_1 \upto \ve s_m \in \RR^n$ and pseudoranges
$t_1 \upto t_m \in \RR$, and we wish to solve the squared global
positioning equations~\cref{eqGPSsquared} with unknowns
$\ve x \in \RR^n$ and $b \in \RR$. For the matrices
$A \in \RR^{m \times (n+2)}$ and $B \in \RR^{m \times (n+1)}$ defined
in~\cref{eqAGPS,eqB} we have until now assumed that $\rank(B) = m$,
which implies $\rank(A) = \rank(B) = m$. In this section we will
assume instead that
\begin{equation} \label{eqRank}%
  \rank(A) = m \ \text{but} \ \rank(B) < m, \ \text{so} \ \rank(B) = m
  - 1.
\end{equation}
We will see in \cref{sDuality} that this assumption or the one made in
\cref{sQsol,sQsat,sShape} provides sufficient generality. Our
assumption implies that every linear system with coefficient matrix
$A$ is solvable, and a solution has a unique value for the first
unknown: swap the first column to the rightmost position and then
perform Gaussian elimination. As explained in ~\cref{sSpheres}, for
solving~\cref{eqGPSsquared} we need to consider the
system~\cref{eqLinear} with
$\langle\vh s_i,\vh s_i\rangle_Q - d_i = \lVert\ve s_i\rVert^2 -
t_i^2$. So we can pick a particular solution, which we write as
$\left(\begin{smallmatrix} \vh x \\ \lambda\end{smallmatrix}\right) =
\left(\begin{smallmatrix} b_0 \\ \ve v' \\ \lambda
  \end{smallmatrix}\right)$ with $\ve v' \in \RR^n$ and $\lambda \in
\RR$. As just noted, $b_0$ is uniquely determined. This means
that~$b_0$ is the unique value for the bias~$b$ in a solution
of~\cref{eqGPSsquared}. We have
\[
  A \cdot
  \begin{pmatrix}
    b_0 \\ \ve v' \\ \lambda
  \end{pmatrix} =
  \begin{pmatrix}
    \lVert\ve s_1\rVert^2 - t_1^2 \\
    \vdots \\
    \lVert\ve s_m\rVert^2 - t_m^2
  \end{pmatrix},
\]
which, recalling the definition of $A$ and $B$, we can rewrite as
\[
  B \cdot
  \begin{pmatrix}
    \ve v' \\ \lambda + b_0^2
  \end{pmatrix} =
  \begin{pmatrix}
    \lVert\ve s_1\rVert^2 - (t_1 - b_0)^2 \\
    \vdots \\
    \lVert\ve s_m\rVert^2 - (t_m - b_0)^2
  \end{pmatrix}.    
\]
Consider the space
$U := \langle\ve s_i - \ve s_1 \mid 2 \le i \le m\rangle_\mathrm{lin}
\subseteq \RR^n$, which by the hypothesis $\rank(B) = m - 1$ has
dimension equal to $m - 2$. As in \cref{sQsol}, choose an orthonormal
basis $\ve w_1 \upto \ve w_k$ of the orthogonal complement $U^\perp$,
so here $k = n - (m - 2)$. With
$\gamma_j := \langle\ve s_i,\ve w_j\rangle$, which does not depend
on~$i$, the vectors
$\left(\begin{smallmatrix} \ve w_j \\ 2
    \gamma_j\end{smallmatrix}\right)$ form a basis of the kernel of
$B$. Thus by subtracting a suitable linear combination of the
$\left(\begin{smallmatrix} \ve w_j \\ 2
    \gamma_j\end{smallmatrix}\right)$ from
$\left(\begin{smallmatrix} \ve v' \\ \lambda +
    b_0^2\end{smallmatrix}\right)$, we obtain a (uniquely determined)
vector $\left(\begin{smallmatrix} \ve v \\
    \beta\end{smallmatrix}\right)$ such that
\begin{equation} \label{eqVbeta2}%
  B \cdot
  \begin{pmatrix}
    \ve v \\ \beta
  \end{pmatrix} =
  \begin{pmatrix}
    \lVert\ve s_1\rVert^2 - (t_1 - b_0)^2 \\
    \vdots \\
    \lVert\ve s_m\rVert^2 - (t_m - b_0)^2
  \end{pmatrix}    
\end{equation}
and $\langle\ve v,\ve w_j\rangle = \gamma_j$ ($j = 1 \upto k$).

\begin{theorem}[The set of solutions] \label{tSol2}%
  In the above setting the set of solutions of~\cref{eqGPSsquared} is
  \begin{multline*}
    \mathfrak X = \Bigl\{(b_0,\ve x) \ \Bigl| \ \ve x = \ve v +
    \sum_{j=1}^k y_j \ve w_j \ \text{and} \ \sum_{j=1}^k y_j^2 = \beta
    - \lVert\ve v\rVert^2\Bigr\} \\
    = \Bigl\{(b_0,\ve v + \ve y) \ \Bigl| \ \ve y \in U^\perp \
    \text{and} \ \lVert\ve y\rVert^2 = \beta - \lVert\ve
    v\rVert^2\Bigr\}.
  \end{multline*}
  The point~$\ve v$ lies in the affine span
  $\langle\ve s_1 \upto \ve s_m\rangle_\mathrm{aff}$.
\end{theorem}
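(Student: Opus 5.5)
The plan follows the pattern of \cref{tSol}, via the linearization of \cref{sSpheres}. First, fix the bias. By the discussion before the theorem, every solution $(b,\ve x)$ of~\cref{eqGPSsquared} gives a solution $\left(\begin{smallmatrix} b \\ \ve x \\ \lVert\ve x\rVert^2 - b^2\end{smallmatrix}\right)$ of~\cref{eqLinear}. Since $\rank(A) = m > \rank(B)$, every such solution has the same first coordinate $b_0$. Hence every element of $\mathfrak X$ has the form $(b_0,\ve x)$.

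Next, substitute $b = b_0$. Expanding $\lVert\ve s_i - \ve x\rVert^2 = (t_i - b_0)^2$ shows that~\cref{eqGPSsquared} with $b = b_0$ is equivalent to
\[
  B \cdot \begin{pmatrix} \ve x \\ \lVert\ve x\rVert^2 \end{pmatrix} = \begin{pmatrix} \lVert\ve s_1\rVert^2 - (t_1 - b_0)^2 \\ \vdots \\ \lVert\ve s_m\rVert^2 - (t_m - b_0)^2 \end{pmatrix}.
\]
Introducing a new unknown $\lambda'$ in place of $\lVert\ve x\rVert^2$, the linear system $B \left(\begin{smallmatrix} \ve x \\ \lambda'\end{smallmatrix}\right) = \text{(right-hand side)}$ has particular solution $\left(\begin{smallmatrix} \ve v \\ \beta\end{smallmatrix}\right)$ by~\cref{eqVbeta2}. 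Its kernel is spanned by the $\left(\begin{smallmatrix} \ve w_j \\ 2\gamma_j\end{smallmatrix}\right)$. I would check this kernel claim briefly: such vectors lie in the kernel because $\langle \ve s_i,\ve w_j\rangle = \gamma_j$. The kernel has dimension $n+1-(m-1) = k$, and these $k$ vectors are independent since the $\ve w_j$ are orthonormal.

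The general solution is therefore $\ve x = \ve v + \sum_j y_j \ve w_j$ and $\lambda' = \beta + 2\sum_j y_j\gamma_j$. Imposing $\lambda' = \lVert\ve x\rVert^2$ and using $\langle\ve v,\ve w_j\rangle = \gamma_j$ together with orthonormality gives
\[
  \lVert\ve x\rVert^2 = \lVert\ve v\rVert^2 + 2\sum_j y_j\gamma_j + \sum_j y_j^2.
\]
The condition thus becomes $\sum_j y_j^2 = \beta - \lVert\ve v\rVert^2$. Conversely, any such $(b_0,\ve x)$ satisfies the displayed system, hence~\cref{eqGPSsquared}. This proves both descriptions of $\mathfrak X$; the second follows because the $\ve w_j$ form an orthonormal basis of $U^\perp$.

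Finally, the condition $\langle\ve v,\ve w_j\rangle = \gamma_j = \langle\ve s_1,\ve w_j\rangle$ gives $\ve v - \ve s_1 \in (U^\perp)^\perp = U$. So $\ve v \in \ve s_1 + U = \langle\ve s_1 \upto \ve s_m\rangle_\mathrm{aff}$, as in~\cref{eqVsi}. The only delicate point is the first step, namely that $b$ is forced to equal $b_0$ for \emph{every} solution and not just for the chosen particular solution. This holds because the first column of $A$ is not in the span of the others: $\rank(A) > \rank(B)$.
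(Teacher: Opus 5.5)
Your proof is correct and follows essentially the same route as the paper's proof. You fix $b=b_0$ via $\rank(A)>\rank(B)$, reduce to the sphere-intersection linearization with matrix $B$ and right-hand side~\cref{eqVbeta2}, impose the quadratic constraint on the resulting affine solution space, and read off $\ve v\in\langle\ve s_1\upto\ve s_m\rangle_\mathrm{aff}$ from $\langle\ve v,\ve w_j\rangle=\gamma_j$; you only make explicit some details the paper leaves implicit (why the first coordinate is forced, the kernel dimension count, and the converse inclusion).
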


\begin{proof}
  We have already seen that~$b_0$ is the unique value for the bias~$b$
  in every solution of~\cref{eqGPSsquared}. So we are left with the
  system
  \[
    \lVert\ve s_i - \ve x\rVert^2 = (t_i - b_0)^2 \qquad (i = 1 \upto
    m)
  \]
  for the unkown point~$\ve x$. To this system we apply the method for
  the sphere intersection problem from \cref{sSpheres}, so now
  $Q = I_n$ and $d_i = (t_i - b_0)^2$. Thus the linear
  system~\cref{eqLinear} becomes the system with matrix $B$ and right
  hand side as in~\cref{eqVbeta2}. The solution space is
  \[
    L = \Bigl\{
    \begin{pmatrix}
      \ve v \\ \beta
    \end{pmatrix} + \sum_{j=1}^k y_j
    \begin{pmatrix}
      \ve w_j \\ 2 \gamma_j
    \end{pmatrix} \ \Bigl| \ y_1 \upto y_k \in \RR\Bigr\}.
  \]
  So we obtain $\mathfrak X$ by imposing the
  equation~\cref{eqQuadratic} on the points of $L$, which leads to
  \[
    0 = \bigl\lVert\ve v + {\textstyle\sum_{j=1}^k y_j \ve
      w_j}\bigr\rVert^2 - \beta - 2 \sum_{j=1}^k \gamma_j y_j =
    \sum_{j=1}^k y_j^2 + \lVert\ve v\rVert^2 - \beta.
  \]
  So we get the formulas from the theorem. The last claim follows from
  $\langle\ve v,\ve w_j\rangle = \gamma_j$.
\end{proof}

So, depending on~$k$ (which may be any integer between~$0$ and~$n$)
and on $\beta - \lVert\ve v\rVert^2$, $\Sol$, the quadric of
solutions, can be empty, a single point, or a sphere around~$\ve v$
that lives in the affine subspace $\ve v + U^\perp$. Notice that by
the definition of $U$, we have
$\langle\ve s_1 \upto \ve s_m\rangle_\mathrm{aff} = \ve v + U$.

The next step is to determine the set $\mathcal S$ as defined
in~\cref{eqS}.

\begin{theorem}[The locus of satellites] \label{tSat2}%
  Assume the situation introduced before \cref{tSol2} and set
  \[
    \mathcal S' := \Bigl\{(t,\ve s) \in \RR \times \RR^n \ \Bigl| \
    \langle\ve s,\ve w_j\rangle = \gamma_j \ (j = 1 \upto k), \ (t -
    b_0)^2 = \lVert\ve s - \ve v\rVert^2 + \beta - \lVert\ve
    v\rVert^2\Bigr\}.
  \]
  Notice that the condition
  ``$\langle\ve s,\ve w_j\rangle = \gamma_j$'' means that
  $\ve s \in \langle\ve s_1 \upto \ve s_m\rangle_\mathrm{aff} = \ve v
  + U$.
  \begin{enumerate}[label=(\alph*)]
  \item \label{tSat2A} For $i = 1 \upto m$ we have
    $(t_i,\ve s_i) \in \mathcal S'$.
  \item \label{tSat2B} $\mathcal S' \subseteq \mathcal S$.
  \item \label{tSat2C} If $|\mathfrak X| > 1$, then
    $\mathcal S = \mathcal S'$.
  \end{enumerate}
\end{theorem}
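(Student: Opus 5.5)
We prove \cref{tSat2} along the lines of \cref{tSat}, using \cref{tSol2} in place of \cref{tSol}. The key computation is this: for $\ve s \in \ve v + U$ and $\ve y \in U^\perp$ we have $\lVert\ve s - \ve v - \ve y\rVert^2 = \lVert\ve s - \ve v\rVert^2 + \lVert\ve y\rVert^2$, because $\ve s - \ve v \in U$. Hence for every $(b_0,\ve v + \ve y) \in \mathfrak X$,
\[
  \lVert\ve s - \ve x\rVert^2 - (t - b_0)^2 = \lVert\ve s - \ve v\rVert^2 + \beta - \lVert\ve v\rVert^2 - (t - b_0)^2 .
\]

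\textbf{Part \ref{tSat2A}.} Fix~$i$. Then $\langle\ve s_i,\ve w_j\rangle = \gamma_j$ holds by the definition of the~$\gamma_j$. Row~$i$ of \cref{eqVbeta2} says
\[
  2\langle\ve s_i,\ve v\rangle - \beta = \lVert\ve s_i\rVert^2 - (t_i - b_0)^2 ,
\]
which rearranges to $(t_i - b_0)^2 = \lVert\ve s_i - \ve v\rVert^2 + \beta - \lVert\ve v\rVert^2$. This is the defining equation of $\mathcal S'$.

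\textbf{Part \ref{tSat2B}.} Let $(t,\ve s) \in \mathcal S'$. Then $\ve s \in \ve v + U$, since $\ve v$ lies in the affine span by \cref{tSol2}. By the displayed identity and the defining equation of $\mathcal S'$, the right-hand side vanishes for every element of $\mathfrak X$. So $(t,\ve s) \in \mathcal S$.

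\textbf{Part \ref{tSat2C}.} Let $(t,\ve s) \in \mathcal S$ and assume $|\mathfrak X| > 1$. By \cref{tSol2} this forces $k \ge 1$ and $\rho := \beta - \lVert\ve v\rVert^2 > 0$, so $\mathfrak X$ contains the points $(b_0,\ve v \pm \sqrt\rho\,\ve w_j)$ for every~$j$.

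Write $\ve s = \ve v + \ve a + \ve c$ with $\ve a \in U$ and $\ve c \in U^\perp$. Comparing the equations $\lVert\ve s - \ve x\rVert^2 = (t - b_0)^2$ for $\ve x = \ve v + \sqrt\rho\,\ve w_j$ and $\ve x = \ve v - \sqrt\rho\,\ve w_j$ and subtracting gives $4\sqrt\rho\,\langle\ve c,\ve w_j\rangle = 0$. Hence $\langle\ve c,\ve w_j\rangle = 0$ for all~$j$, so $\ve c = \ve 0$. This means $\ve s \in \ve v + U$, i.e.\ $\langle\ve s,\ve w_j\rangle = \gamma_j$ for all~$j$.

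Substituting any single solution, say $\ve x = \ve v + \sqrt\rho\,\ve w_1$, and using $\ve a \perp \ve w_1$, gives $(t - b_0)^2 = \lVert\ve a\rVert^2 + \rho$. This is the remaining defining condition, so $(t,\ve s) \in \mathcal S'$.

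The only delicate point is this use of $|\mathfrak X| > 1$: it guarantees that $\mathfrak X$ is a genuine sphere of positive radius in $\ve v + U^\perp$, so the antipodal points $\ve v \pm \sqrt\rho\,\ve w_j$ exist. Unlike in \cref{tSat}, no rank argument is needed, since $b_0$ is fixed.
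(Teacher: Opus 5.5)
Your proof is correct. Parts (a) and (b) match the paper's. In (b) you do inline the orthogonality computation that the paper packages in \cref{lSat2}: the paper first notes that $\langle\ve s,\ve w_j\rangle = \gamma_j$ gives $B'$ the same kernel as $B$, hence rank $m-1$, and then cites the lemma.

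Part (c) follows a genuinely different route. The paper picks $k+1$ points $\ve x_1 \upto \ve x_{k+1}$ of the sphere $\Sol$ in general linear position. It then reads the conditions $\lVert\ve x_i - \ve s\rVert^2 = (b_0 - t)^2$ as a positioning problem for the unknown $(t,\ve s)$, with the $(b_0,\ve x_i)$ playing the role of known satellites that all carry the same pseudorange $b_0$. This is the situation of \cref{exSphere1}, so \cref{eqSolSphere} yields $\ve s \in \ve v + U$, and \cref{lSat2} then supplies the quadratic condition.

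You instead subtract the equations for the antipodal pairs $\ve v \pm \sqrt\rho\,\ve w_j$. This kills the $U^\perp$-component of $\ve s - \ve v$ directly, and one further substitution gives the quadratic condition.

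Your argument is shorter and self-contained. It never needs to check that the chosen $\ve x_i$ satisfy the hypotheses of \cref{exSphere1}, nor to identify the point produced there with the center $\ve v$ of $\Sol$, which the paper does only implicitly. The paper's argument is chosen to exhibit the duality between this section and \cref{exSphere1} (satellites and solutions trading roles). It uses that duality immediately afterwards to motivate \cref{tSolSat2}.
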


The following lemma is analogous to \cref{lSat}.

\begin{lemma} \label{lSat2}%
  In the situation of \cref{tSol2}, let
  $(t,\ve s) \in \RR \times \RR^n$. Form the matrix
  $B'\in \RR^{(m+1) \times (n+1)}$ by appending the row
  $(2 \ve s^T,-1)$ at the bottom of $B$ as defined by~\cref{eqB}.  Let
  $\mathfrak X'$ be the set of solutions $(b,\ve x)$ of the
  system~\cref{eqGPSsquared}, enlarged by adding the pair
  $(\ve s_{m+1},t_{m+1}) := (\ve s,t)$. If $rank(B') = m - 1$, then
  \[
    \mathfrak X' = \left\{
      \begin{array}{ll}
        \mathfrak X & \text{if} \quad (t -
                      b_0)^2 = \lVert\ve s - \ve v\rVert^2 + \beta -
                      \lVert\ve v\rVert^2 \\
        \emptyset & \text{otherwise}
      \end{array}\right..
  \]
\end{lemma}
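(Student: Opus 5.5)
Since $B$ is a submatrix of $B'$, $\rank(B') = m-1 = \rank(B)$ gives $\ker(B') = \ker(B)$. Each vector $\left(\begin{smallmatrix} \ve w_j \\ 2\gamma_j\end{smallmatrix}\right)$ therefore lies in the kernel of $B'$. Reading off the last row, this says $\langle\ve s,\ve w_j\rangle = \gamma_j$ for $j = 1 \upto k$, so $\ve s \in \ve v + U$.

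Next I will show that the enlarged system still forces the bias $b_0$, so that $\mathfrak X' \subseteq \mathfrak X$ consists of pairs $(b_0,\ve x)$ with $\ve x = \ve v + \ve y$ as in \cref{tSol2}. The inclusion $\mathfrak X' \subseteq \mathfrak X$ is automatic, and \cref{tSol2} already tells us that every element of $\mathfrak X$ has bias $b_0$. We do not even need to discuss $\rank(A')$.

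The core step is a computation analogous to~\cref{eqGh}. Take $\ve x = \ve v + \sum_j y_j \ve w_j$ with $\ve y := \sum_j y_j\ve w_j \in U^\perp$. Since $\ve s - \ve v \in U$ is orthogonal to $\ve y$, we get
\[
  \lVert\ve s - \ve x\rVert^2 - (t - b_0)^2 = \lVert\ve s - \ve v\rVert^2 + \lVert\ve y\rVert^2 - (t-b_0)^2 .
\]
For $(b_0,\ve x) \in \mathfrak X$ we have $\lVert\ve y\rVert^2 = \beta - \lVert\ve v\rVert^2$ by \cref{tSol2}. Substituting gives
\[
  \lVert\ve s - \ve x\rVert^2 - (t - b_0)^2 = \lVert\ve s - \ve v\rVert^2 + \beta - \lVert\ve v\rVert^2 - (t-b_0)^2 ,
\]
an expression independent of the particular element of $\mathfrak X$.

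The dichotomy follows. If the stated equation holds, every element of $\mathfrak X$ satisfies the added equation, so $\mathfrak X' = \mathfrak X$. Otherwise no element of $\mathfrak X$ satisfies it, so $\mathfrak X' = \emptyset$; this also covers the case $\mathfrak X = \emptyset$.

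The only point needing care is the orthogonality $\ve s - \ve v \perp U^\perp$. This is exactly where the rank hypothesis on $B'$ enters, via the kernel argument in the first step, together with the fact from \cref{tSol2} that $\ve v \in \ve v + U$ has $\langle\ve v,\ve w_j\rangle = \gamma_j$. The rest is routine.
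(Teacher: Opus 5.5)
Your proof is correct and follows essentially the same route as the paper: the kernel comparison of $B$ and $B'$ gives $\langle\ve s,\ve w_j\rangle = \gamma_j$, hence $\ve s - \ve v \in U$, and the Pythagorean identity $\lVert\ve s - \ve v - \ve y\rVert^2 = \lVert\ve s - \ve v\rVert^2 + \lVert\ve y\rVert^2$ together with \cref{tSol2} settles both cases. The only difference is presentational: you note once that the residual of the added equation is constant on $\mathfrak X$, whereas the paper proves the two directions separately.
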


\begin{proof}
  Since they have the same rank, $B$ and $B'$ also have the same
  kernel. This implies $\langle\ve s,\ve w_j\rangle = \gamma_j$ for
  all~$j$, so $\ve s - \ve v \in U$. First assume that
  $(t - b_0)^2 = \lVert\ve s - \ve v\rVert^2 + \beta - \lVert\ve
  v\rVert^2$. Let $(b,\ve x) \in \mathfrak X$. Then by \cref{tSol2} we
  have $b = b_0$ and $\ve x = \ve v + \ve y$ with $\ve y \in U^\perp$,
  $\lVert\ve y\rVert^2 = \beta - \lVert\ve v\rVert^2$. Therefore
  \[
    \lVert\ve s - \ve x\rVert^2 = \lVert\ve s - \ve v - \ve y\rVert^2
    = \lVert\ve s - \ve v\Vert^2 + \lVert\ve y\rVert^2 = (t - b_0)^2 -
    \beta + \lVert\ve v\rVert^2 + \beta - \lVert\ve v\rVert^2 = (t -
    b_0)^2.
  \]
  So $(b,\ve x) \in \mathfrak X'$, and we get
  $\mathfrak X' \subseteq \mathfrak X$. The reverse inclusion is
  clear, so $\mathfrak X' = \mathfrak X$.

  Now assume that $\mathfrak X' \ne \emptyset$ and choose
  $(b,\ve x) \in \mathfrak X'$. Then also $(b,\ve x) \in \mathfrak X$,
  so as above $b = b_0$ and $\ve x = \ve v + \ve y$ with
  $\ve y \in \U^\perp$,
  $\lVert\ve y\rVert^2 = \beta - \lVert\ve v\rVert^2$. Since $(b,\ve
  x) \in \mathfrak X'$ we obtain
  \[
    0 = \lVert\ve s - \ve x\rVert^2 - (t - b_0)^2 = \lVert\ve s - \ve
    v + \ve y\rVert^2 - (t - b_0)^2 = \lVert\ve s - \ve v\Vert^2 +
    \beta - \lVert\ve v\rVert^2 - (t - b_0)^2.
  \]
  So if the equation
  $(t - b_0)^2 = \lVert\ve s - \ve v\rVert^2 + \beta - \lVert\ve
  v\rVert^2$ is not satisfied then $\mathfrak X' = \emptyset$.
\end{proof}

\begin{proof}[Proof of \cref{tSat2}]
  \begin{enumerate}
  \item[\ref{tSat2A}] For all~$i$ and~$j$ we have
    $\langle\ve s_i,\ve w_j\rangle = \gamma_j$ by the definition of
    the~$\gamma_j$. Moreover, from~\cref{eqVbeta2} we have
    $2 \langle\ve s_i,\ve v\rangle - \beta = \lVert\ve s_i\rVert^2 -
    (t_i - b_0)^2$, so
    $(t_i - b_0)^2 = \lVert\ve s_i - \ve v\rVert^2 + \beta - \lVert\ve
    v\rVert^2$. Thus $(t_i,\ve s_i) \in \mathcal S'$.
  \item[\ref{tSat2B}] Let $(t,\ve s) \in \mathcal S'$ and form the
    matrix $B' \in \RR^{(m+1) \times (n+1)}$ as in \cref{lSat2}. Since
    $\langle\ve s,\ve w_j\rangle = \gamma_j$, $B'$ and $B$ have the
    same kernel, so $\rank(B') = m - 1$. So \cref{lSat2} tells us that
    $\mathfrak X' = \mathfrak X$ and thus $(t,\ve s) \in \mathcal S$.
  \item[\ref{tSat2C}] Since $|\mathfrak X| > 1$, \cref{tSol2} tells us
    that $\Sol$ is a sphere of positive radius around the
    point~$\ve v$ within the affine subspace $\ve v + U^\perp$. This
    implies~$k = \dim(U^\perp) > 0$. So we can pick
    $\ve x_1 \upto \ve x_{k+1} \in \Sol$ that are in general linear
    position, so the $\ve x_i - \ve x_1$ generate $U^\perp$.

    Now let $(t,\ve s) \in \mathcal S$. Then, since
    $(b_0,\ve x_i) \in \mathfrak X$ we obtain
    $\lVert\ve x_i - \ve s\rVert^2 - (b_0 - t)^2 = 0$ for
    $i = 1 \upto k+1$. So $(t,\ve s)$ solves the
    system~\cref{eqGPSsquared}, but with the $(b_0,\ve x_i)$ taking
    the role of known quantities. Therefore we are in the situation of
    \cref{exSphere1}. In this example, \cref{eqSolSphere} tells us
    that the solution point~$\ve s$ lies in the affine space
    containing the point~$\ve v$ whose associated linear space is the
    orthogonal complement of the space generated by the
    $\ve x_i - \ve x_1$. This means that $\ve s \in \ve v +
    U$. Therefore $\langle\ve s,\ve w_j\rangle = \gamma_j$, and we see
    again that the matrix $B'$ from \cref{lSat2} has rank $m - 1$. Now
    $(t,\ve s) \in \mathcal S$ implies
    $\mathfrak X' = \mathfrak X \ne \emptyset$, so the lemma says that
    $(t - b_0)^2 = \lVert\ve s - \ve v\rVert^2 + \beta - \lVert\ve
    v\rVert^2$. So $(t,\ve s)$ satisfies both conditions for lying in
    $\mathcal S'$, and the equality $\mathcal S = \mathcal S'$
    follows. \endproof
  \end{enumerate}
\end{proof}

\begin{rem} \label{rIneq}%
  What can we say about the inequalities~\cref{eqIneq} in the case of \cref{tSat2}? By \cref{tSat2A} and the formula for $\mathcal S'$ in the theorem, every $t_i - b_0$ equals $\pm \sqrt{\lVert\ve s_i - \ve v\rVert^2 + \beta - \lVert\ve
    v\rVert^2}$. So~\cref{eqIneq} is satisfied by the solutions in $\mathfrak X$ if and only if the sign in front of the square root is ``$+$'' for all~$i$. So, unlike in \cref{tIneq}, the solvability of~\cref{eqGPS} does not depend on the position of~$\ve x$ and the~$\ve s_i$, but on the values of~$t_i$.
\end{rem}

We now see, especially after having used \cref{exSphere1} in the above
proof, that the situations considered in this section and in
\cref{exSphere1} are dual to each other. Consequently, the following
theorem summarizes our results about both these situations. The
theorem complements \cref{tSolSat}.

\begin{theorem}[The quadric of solutions and the quadric of
  satellites] \label{tSolSat2}%
  Let $\ve s_1 \upto \ve s_m \in \RR^n$ and let
  $t_1 \upto t_m \in \RR$ such that the matrix
  $A \in \RR^{m \times (n+2)}$, given by~\cref{eqAGPS}, has
  rank~$m \ge 2$. Also assume that the system~\cref{eqGPSsquared} has
  at least two solutions. With $\Sol$ and $\Sat$ defined
  by~\cref{eqQsol,eqQsat}, we have:
  \begin{enumerate}[label=(\alph*)]
  \item \label{tSolSat2A} If $\ve s_1 \upto \ve s_m$ are {\em not} in
    general linear position, then
    \begin{enumerate}[label=(\arabic*)]
    \item \label{tSolSat2A1} $\Sat$ is the affine subspace of
      dimension $m - 2 < n$ given by
      $\Sat = \langle\ve s_1 \upto \ve s_m\rangle_\mathrm{aff}$.
    \item \label{tSolSat2A2} $\Sol$ is a sphere inside an affine
      subspace $\Asol$ of dimension $k := n - (m-2)$ that is
      perpendicular to $\Sat$. If $k = 1$, $\Sol$ consists of two
      points. The center of the sphere $\Sol$ is the point where
      $\Asol$ and $\Sat$ meet.
    \end{enumerate}
  \item \label{tSolSat2B} If all~$t_i$ are equal, then
    \begin{enumerate}[label=(\arabic*)]
    \item \label{tSolSat2B1} $\Sat$ is a sphere inside the affine
      subspace
      $\Asat = \langle\ve s_1 \upto \ve s_m\rangle_\mathrm{aff}$,
      which in this case has dimension $m - 1$. If $m = 2$, $\Sat$
      consists of the two points~$\ve s_1$ and~$\ve s_2$.
    \item \label{tSolSat2B2} $\Sol$ is an affine subspace of dimension
      $n - (m-1)$ that is perpendicular to $\Asat$. The center of the
      sphere $\Sat$ is the point where $\Sol$ and $\Asat$ meet.
    \end{enumerate}
  \end{enumerate}
  Notice that if neither the condition of \cref{tSolSat2A} nor of
  \cref{tSolSat2B} is met, we are in the situation of \cref{tSolSat}
  (or in the ``idiosyncratic'' situation of \cref{pCollinear}).
\end{theorem}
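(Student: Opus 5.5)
The plan is to treat the two parts separately. Part (a) follows from \cref{tSol2,tSat2}. Part (b) follows from \cref{exSphere1,exSphere2}. In each part the only genuinely new input is an identification of affine spans and perpendicularity.

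For \ref{tSolSat2A}, note first that $\rank(A) = m$ together with the $\ve s_i$ not being in general linear position forces $\rank(B) = m-1$. This is the setting~\cref{eqRank}, because appending one column raises the rank by at most one. Now \cref{tSol2} gives $\Sol = \{\ve v + \ve y \mid \ve y \in U^\perp,\ \lVert\ve y\rVert^2 = \beta - \lVert\ve v\rVert^2\}$. Since $|\mathfrak X| > 1$, this is a sphere of positive radius centered at $\ve v$ inside $\Asol := \ve v + U^\perp$, whose dimension is $k = n-(m-2)$. If $k=1$, it is two points. For $\Sat$ I will use \cref{tSat2}\ref{tSat2C}, which gives $\mathcal S = \mathcal S'$. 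Hence $\Sat$ is the set of $\ve s \in \ve v + U$ for which some $t$ satisfies $(t-b_0)^2 = \lVert\ve s - \ve v\rVert^2 + \beta - \lVert\ve v\rVert^2$. The right-hand side is $\ge \beta - \lVert\ve v\rVert^2 > 0$, so such a $t$ always exists and $\Sat = \ve v + U = \langle\ve s_1 \upto \ve s_m\rangle_\mathrm{aff}$, of dimension $m-2<n$. Perpendicularity of $\ve v+U$ and $\ve v+U^\perp$ is immediate. They meet exactly in $\ve v$, because $U \cap U^\perp = 0$, and $\ve v$ is the center.

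For \ref{tSolSat2B}, all $t_i$ equal means $\ve u = \ve 0$ by \cref{exSphere1}. I first need $\rank(B) = m$. If instead $\rank(B) = m-1$, then the first column of $A$ is a multiple of the last column, so it lies in the column span of $B$, and we would get $\rank(A) = m-1$, a contradiction. So the $\ve s_i$ are in general linear position, and the setting of \cref{sQsol} applies. \cref{eqSolSphere} gives $\Sol = \ve v + U^\perp$, of dimension $n-(m-1)$, and \cref{exSphere2} (using \cref{tSat}\ref{tSatC} and $|\mathfrak X|>1$) gives $\Sat$ as the sphere of radius $r$ about $\ve v$ in $\Asat = \ve v + U$, which has dimension $m-1$. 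The center $\ve v$ lies in $\Sat$'s span by~\cref{eqVsi}, and the two spaces meet exactly in $\ve v$ as before. For $m=2$, the circle in the line $\langle\ve s_1,\ve s_2\rangle_\mathrm{aff}$ consists of two points, and these must be $\ve s_1$ and $\ve s_2$ by \cref{tSat}\ref{tSatA}.

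The remaining items are bookkeeping. I need to check that $r>0$ (this holds for $m\ge2$ by \cref{exSphere1}), so that the sphere is nondegenerate. For the closing remark: if neither (a) nor (b) holds, then $\rank(B) = m$ and the $t_i$ are not all equal. Since $|\mathfrak X| > 1$, assumptions \ref{A1}--\ref{A3} hold, so we are in \cref{tSolSat} unless the exceptional case of \cref{pCollinear} occurs. I expect the main obstacle to be the rank argument in (b), namely excluding $\rank(B) < m$ when all $t_i$ are equal. A secondary point is confirming that the $t$-equation in $\mathcal S'$ is solvable for every $\ve s \in \ve v+U$ in part (a).
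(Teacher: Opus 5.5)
Your proposal is correct and follows the paper's proof essentially step for step. Part (a) goes through \cref{tSol2,tSat2}, using $\beta - \lVert\ve v\rVert^2 > 0$ from $|\mathfrak X| \ge 2$; part (b) uses the linear dependence of the first and last columns of $A$ to get $\rank(B) = \rank(A) = m$ and then applies \cref{exSphere1,exSphere2}. The one step you leave unjustified is the inequality $m-2<n$ in (a)(1): the paper gets it by first noting $m \le n+1$ (otherwise $A$ would be square and invertible, leaving at most one solution), and equivalently $|\mathfrak X|>1$ forces $k \ge 1$ in \cref{tSol2}.
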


\begin{proof}
  We have $m \le n + 1$ since otherwise $A$ would have rank $n + 2$,
  contradicting the hypothesis that~\cref{eqGPSsquared} has at least
  two solutions.
  \begin{enumerate}
  \item[\ref{tSolSat2A}] Since $m - 1 \le n$, the hypothesis that
    $\ve s_1 \upto \ve s_m$ are not in general linear position means
    that $\langle\ve s_1 \upto \ve s_m\rangle_\mathrm{aff}$ has
    dimension $\le m - 2$, so $\rank(B) < m$. Hence the
    hypothesis~\cref{eqRank} is satisfied, and \cref{tSol2,tSat2}
    apply. Now \cref{tSol2} gives us \cref{tSolSat2A2} and, since
    $|\mathfrak X| \ge 2$, $\beta - \lVert\ve v\rVert^2 > 0$. So
    \cref{tSolSat2A1} follows from \cref{tSat2}.
  \item[\ref{tSolSat2B}] The hypothesis that all~$t_i$ are equal
    implies that the first and last column of $A$ are linearly
    dependent, so $m = \rank(A) = \rank(B)$, which with $m - 1 \le n$
    means that the $\ve s_i$ are in general linear position. Hence we
    are in the situation of \cref{exSphere1,exSphere2}. These yield
    \cref{tSolSat2B2,tSolSat2B1}, respectively.
    \endproof
  \end{enumerate}
\end{proof}

\cref{tSolSat2} is illustrated in \cref{fSolSat2}, which shows the case
$n = m = 3$.

\begin{figure}[h]
  \vspace{-10mm}%
  \begin{tikzpicture}
    \begin{axis}[%
      view={78}{20},%
      xlabel = x,
      ylabel = y,
      zlabel = z,
      ymin=-5,
      ymax=5,
      samples=100,
      samples y=0,
      hide axis,%
      ]%
      
      \addplot3[%
      domain=-4.7:4.7,%
      color=white%
      ]%
      ({-5},{0},{x});%

      \addplot3[%
      domain=-5:5,%
      color=white%
      ]%
      ({x},{0},{-4.7});%
      
      \addplot3[%
      domain=-5:5,%
      color=white%
      ]%
      ({x},{0},{4.7});%
      
      \addplot3[%
      domain=-4.7:4.7,%
      color=white%
      ]%
      ({5},{0},{x});%
      
      \addplot3[%
      domain=-pi/2:pi/2,%
      color = red,
      thick,
      ]%
      ({3.7*sin(deg(x))},{2.2*cos(deg(x))},{0});

      \addplot3[%
      domain=-5:5,%
      ]%
      ({x},{-2.7},{0});%
      
      \addplot3[%
      domain=-5:5,%
      ]%
      ({x},{2.7},{0});%
      
      \addplot3[%
      domain=-2.7:2.7,%
      ]%
      ({-5},{x},{0});%
      
      \addplot3[%
      domain=-4.7:4.7,%
      color=blue,%
      thick%
      ]%
      ({0},{0},{x});%
      
      \addplot3[%
      domain=pi/2:3*pi/2,%
      color = red,
      thick,
      ]%
      ({3.7*sin(deg(x))},{2.2*cos(deg(x))},{0});

      \addplot3[%
      domain=-2.7:2.7,%
      ]%
      ({5},{x},{0});%

      \addplot3[%
      color=blue,%
      mark = *,
      mark size = 1.2,
      draw=none,
      ]%
      table[row sep=crcr]%
      {%
        0 0 0 \\%
      };%
    \end{axis}
  \end{tikzpicture}%
  \vspace{-12mm}
  \caption{An illustration of \cref{tSolSat2}, showing the set $\Sol$
    of solutions and the locus $\Sat$ of satellites (red and
    blue). Their roles are interchangeable.} \label{fSolSat2}
\end{figure}

A sphere has eccentricity~$0$, whereas an affine subspace can be
interpreted as having eccentricity~$\infty$. In this way, the results
of \cref{tSolSat2} fit the pattern from \cref{tSolSat}\ref{tSolSatD}
that the set of solutions and the locus of satellites have reciprocal
eccentricities.

\section{More generality and duality} \label{sDuality}

Until now we have assumed that the matrix 
$A \in \RR^{m \times (n+2)}$, as defined in~\cref{eqAGPS} and given by
the~$\ve s_i$ and~$t_i$, has rank~$m$. In particular this entails the
limitation $m \le n + 2$, which seems too restrictive. After all, in a
real-world use of global positioning one often has more than five
satellites in view. In this section we lift the rank hypothesis and we
will see that it is, when it comes to exact solutions (rather than
approximative ones), in fact a without-loss assumption. We start by
giving a procedure that gives the solution set of the squared
system~\cref{eqGPSsquared} without any rank hypothesis.

\begin{procedure}[Compute the set of solutions of the squared global
  positioning problem] \label{pGPS}%
  \mbox{}
  \begin{description}
  \item[Input] Points~$\ve s_1 \upto \ve s_r \in \RR^n$ and numbers~$t_1
    \upto t_r \in \RR$.
  \item[Output] The solution set $\mathfrak X$ of~\cref{eqGPSsquared},
    given as a quadric (which may be empty or otherwise degenerated).
  \end{description}
  \begin{enumerate}[label=(\arabic*)]
  \item \label{pGPS1} Form the matrices $A \in \RR^{r \times (n+2)}$
    and $B \in \RR^{r \times (n+1)}$ given by~\cref{eqAGPS,eqB}. Let
    $m := \rank(A)$ and reorder the~$\ve s_i$ and~$t_i$ (and with them
    the rows of $A$ and $B$) in such a way that the first~$m$ rows of
    $A$ are linearly independent. Then delete the last $r - m$ rows
    from $A$ and $B$, so now $A \in \RR^{m \times (n+2)}$ and
    $B \in \RR^{m \times (n+1)}$.
  \item \label{pGPS2} IF $\rank(B) = m$ THEN compute the vectors and
    scalars $\ve u,\ve v,\ve w_1 \upto \ve w_k \in \RR^n$ and
    $\alpha,\beta \in \RR$ as at the beginning of \cref{sQsol}. Let
    $\mathfrak X$ be defined as in \cref{tSol}, and let the function
    $\tilde h$ be given by
    \[
      \tilde h(t,\ve s) := \lVert\ve s\rVert^2 - \langle\ve u,\ve
      s\rangle^2 + 2 \langle\alpha \ve u - \ve v,\ve s\rangle + \beta
      - \alpha^2 \quad \text{for} \quad t \in \RR, \ \ve s \in \RR^n
    \]
    (which in this case does not depend on~$t$).
  \item \label{pGPS3} IF $\rank(B) < m$ THEN compute the vectors and
    scalars $\ve v,\ve w_1 \upto \ve w_k \in \RR^n$ and
    $b_0,\beta \in \RR$ as at the beginning of \cref{sRank}. Let
    $\mathfrak X$ be defined as in \cref{tSol2}, and let the function
    $\tilde h$ be given by
    \[
      \tilde h(t,\ve s) := (t - b_0)^2 - \lVert\ve s - \ve v\rVert^2 -
      \beta + \lVert\ve v\rVert^2 \quad \text{for} \quad t \in \RR, \
      \ve s \in \RR^n.
    \]
  \item \label{pGPS4} FOR $i:= m+1 \ldots r$ \quad DO
    \begin{enumerate}[label=(\arabic*)]
      \setcounter{enumii}{\value{enumi}}
    \item \label{pGPS5} IF $\tilde h(t_i,\ve s_i) \ne 0$ THEN set
      $\mathfrak X := \emptyset$.
    \end{enumerate}
    \setcounter{enumi}{\value{enumii}}
  \item \label{pGPS6} RETURN $\mathfrak X$.
  \end{enumerate}
\end{procedure}

The correctness of the algorithm follows from the following 
proposition, for which we need to introduce a bit of notation:

For a (possibly infinite) nonempty set $M \subseteq \RR \times \RR^n$
we write
\begin{equation} \label{eqXM}%
  \mathfrak X(M) := \Bigl\{(b,\ve x) \in \RR \times \RR^n \ \Bigl| \
  \lVert\ve s - \ve x\rVert^2 - (t - b)^2 = 0 \ \text{for all} \
  (t,\ve s) \in M\Bigr\}.
\end{equation}
There are pairs $(t_1,\ve s_1) \upto (t_m,\ve s_m) \in M$ such that
the matrix $A$ given by the~$\ve s_i$ and~$t_i$ has rank~$m$, and such
that this rank is the maximal possible. For these $(t_i,\ve s_i)$ we
also have the submatrix $B$ as defined in~\cref{eqB}. For the two
cases $\rank(B) = m$ and $\rank(B) < m$, the computation of the
solution set $\mathfrak X$ of~\cref{eqGPSsquared} was treated in
\cref{sQsol,sRank}. For this, the ``auxiliary'' vectors and scalars
$\ve u$, $\alpha$ (only if $\rank(A) = \rank(B)$), $\ve v$, $\beta$,
and $b_0$ (the latter only if $\rank(A) = \rank(B) + 1$) were formed.

\begin{prop} \label{pM}%
  In the above situation, for $(t,\ve s) \in \RR \times \RR^n$ set
  \[
    \tilde h(t,\ve s) :=
    \begin{cases}
      \lVert\ve s\rVert^2 - \langle\ve u,\ve s\rangle^2 + 2
      \langle\alpha \ve u - \ve v,\ve s\rangle + \beta -
      \alpha^2 & \text{if} \ \rank(B) = m \\
      (t - b_0)^2 - \lVert\ve s - \ve v\rVert^2 - \beta + \lVert\ve
      v\rVert^2 & \text{if} \ \rank(B) < m
    \end{cases}.
  \]
  If $\tilde h(t,\ve s) = 0$ for all
  $(t,\ve s) \in M \setminus \bigl\{(t_1,\ve s_1) \upto (t_m,\ve
  s_m)\bigr\}$, then $\mathfrak X(M) = \mathfrak X$. Otherwise
  $\mathfrak X(M) = \emptyset$.
\end{prop}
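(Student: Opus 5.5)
The plan is to reduce everything to \cref{lSat} and \cref{lSat2}, using the maximality of the rank. Write $M_0 := \{(t_1,\ve s_1) \upto (t_m,\ve s_m)\}$, so $\mathfrak X = \mathfrak X(M_0)$. Clearly
\[
  \mathfrak X(M) = \bigcap_{(t,\ve s) \in M} \mathfrak X\bigl(M_0 \cup \{(t,\ve s)\}\bigr).
\]
It therefore suffices to show the following for each single $(t,\ve s) \in M \setminus M_0$: the enlarged solution set $\mathfrak X' := \mathfrak X(M_0 \cup \{(t,\ve s)\})$ equals $\mathfrak X$ if $\tilde h(t,\ve s) = 0$, and is empty otherwise. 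Both assertions of the proposition then follow at once. If all $\tilde h$ vanish, the intersection is $\mathfrak X$. If one $\tilde h$ is nonzero, that factor is empty.

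So fix $(t,\ve s) \in M \setminus M_0$ and form $A'$ by appending the row $(-2t,2\ve s^T,-1)$ to $A$. Since $\rank(A) = m$ was chosen maximal among pairs from $M$, we have $\rank(A') = m$.

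\emph{Case $\rank(B) = m$.} By \cref{lSat}\ref{lSatA}, $\rank(A') = m$ gives $\langle\ve s,\ve w_j\rangle = \gamma_j$ and $t = \langle\ve u,\ve s\rangle - \alpha$. Then \cref{lSat}\ref{lSatB} yields exactly the dichotomy $\mathfrak X' = \mathfrak X$ or $\emptyset$, according to whether $\tilde h(\ve s) = 0$.

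\emph{Case $\rank(B) = m-1$.} Here I want to apply \cref{lSat2}, so I must show that $\rank(B') = m-1$, where $B'$ is $B$ with $(2\ve s^T,-1)$ appended. This is the one step needing an argument. I would take it as follows.
\begin{itemize}
\item The last row of $A'$ lies in the row space of $A$, so $(-2t,2\ve s^T,-1) = \sum_i c_i(-2t_i,2\ve s_i^T,-1)$.
\item Deleting the first coordinate gives $(2\ve s^T,-1) = \sum_i c_i (2\ve s_i^T,-1)$, so this row lies in the row space of $B$.
\item Hence $\rank(B') = \rank(B) = m-1$.
\end{itemize}
\cref{lSat2} then gives $\mathfrak X' = \mathfrak X$ if $(t-b_0)^2 = \lVert\ve s-\ve v\rVert^2 + \beta - \lVert\ve v\rVert^2$, i.e.\ if $\tilde h(t,\ve s) = 0$, and $\mathfrak X' = \emptyset$ otherwise.

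The main obstacle is merely bookkeeping. First, \cref{lSat} is stated in the setting of \cref{tSat}, which assumes general linear position and $m \le n+1$. I would check that this is exactly what $\rank(B) = m$ provides: $m \le n+1$ is automatic because $B$ has $n+1$ columns, and the equivalent conditions listed at the start of \cref{sQsol} give general linear position. Second, the case where $M$ is infinite is harmless, since the argument is pointwise. Finally, \cref{pGPS} is just this proposition applied to the finite set $M = \{(t_i,\ve s_i)\}$.
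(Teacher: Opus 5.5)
Your proposal is correct and follows essentially the same route as the paper: maximality of $m$ forces $\rank(A') = m$, so the appended row of $B'$ lies in the row space of $B$, and then \cref{lSat} or \cref{lSat2} gives the pointwise dichotomy $\mathfrak X' = \mathfrak X$ or $\mathfrak X' = \emptyset$ according to whether $\tilde h(t,\ve s)$ vanishes. Your intersection formula $\mathfrak X(M) = \bigcap_{(t,\ve s) \in M} \mathfrak X\bigl(M_0 \cup \{(t,\ve s)\}\bigr)$ and your check of the hypotheses of \cref{lSat} only make explicit what the paper compresses into ``from this our claim follows.''
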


\begin{proof}
  Take
  $(t,\ve s) \in M\setminus \bigl\{(t_1,\ve s_1) \upto (t_m,\ve
  s_m)\bigr\}$, and form $A'$, $B'$, and $\mathfrak X'$ as in
  \cref{lSat,lSat2}. Because of the maximality of~$m$ we have
  $\rank(A') = m$, so the additional row of $A'$ is a linear
  combination of the first~$m$ rows. Therefore the same holds for the
  rows of $B'$, so $\rank(B') = \rank(B)$. Now \cref{lSat} or
  \cref{lSat2}, depending on the case, tell us that
  $\mathfrak X' = \mathfrak X$ if $\tilde h(\ve t,s) = 0$ and
  $\mathfrak X' = \emptyset$ otherwise. From this our claim follows.
\end{proof}

\begin{rem} \label{rM}%
  \cref{pM} also shows that if $\mathfrak X(M) \ne \emptyset$ then
  there is a finite subset $M' \subseteq M$ with $|M'| \le n + 2$ such
  that $\mathfrak X(M) = \mathfrak X(M')$. If
  $\mathfrak X(M) = \emptyset$ the upper bound is $|M'| \le n + 3$. Of
  course $\mathfrak X(M) = \emptyset$ can also be achieved with
  $|M| = 3$: just set
  $M = \bigl\{(t_1,\ve s),(t_2,\ve s),(t_3,\ve s)\bigr\}$ with
  pairwise distinct~$t_i$.
\end{rem}

\begin{rem} \label{rDuality}%
  To push the idea of duality further, it is convenient to change the
  notation by writing $M^* := \mathfrak X(M)$ for the set defined
  by~\cref{eqXM}, so $M^*$ is the set of solutions
  of~\cref{eqGPSsquared}, but with potentially infinitely many
  equations. Since there is complete symmetry between $(b,\ve x)$ and
  $(t,\ve s)$ in~\cref{eqXM}, we have
  \[
    M \subseteq M^{**}.
  \]
  For a subset $N \subseteq M$ we clearly have $M^* \subseteq
  N^*$. These two rules imply $M^{***} = M^*$. Passing from a set
  $M \subseteq \RR \times \RR^n$ to $M^{**}$ can be seen as a closure
  operation, and we conclude that the closed sets are precisely those
  of the form $M^*$. By \cref{rM}, $M$ can be chosen finite of
  size $\le n + 2$. To reconnect this notation with the one used
  during the bulk of this paper, notice that in the special case
  $M = \bigl\{(t_1,\ve s_1) \upto (t_m,\ve s_m)\bigr\}$, the sets
  $\mathfrak X$ and $\mathcal S$ defined in \cref{eqX,eqS} are
  $\mathfrak X = M^*$ and $\mathcal S = \mathfrak X^* = M^{**}$, so
  $\mathfrak X = \mathcal S^*$.

  For ``closed'' subsets of $\RR \times \RR^n$ we thus have perfect
  duality. If we project them to $\RR^n$, we obtain the sets $\Sol$
  and $\Sat$, which are geometrically more meaningful and which also
  display some duality that is perhaps best illustrated by
  \cref{fQuadrics,fSolSat2}. But this duality breaks down ``on the
  fringes'' due to a loss of information from the projection. For
  instance, if $\Sol$ is a line, $\Sat$ may be another line in the
  situation of \cref{pCollinear}, or a circle in the situation of
  \cref{fSolSat2}. The results of this paper can now be expressed by
  saying that the projections to $\RR^n$ of closed subsets $M$ are
  spheres, prolate spheroids, hyperboloids of revolution of two
  sheets, pairs of points, affine subspaces, single points, or
  empty. They also say that duality exchanges foci and vertices,
  inverts the eccentricity, but preserves the axis of symmetry.
\end{rem}

\section{Two application examples}
\label{sec:examples_applications}
\subsection{Indoor vacuum robot}
\label{sec:vacuum_robot}

Consider a robot vacuum rolling on the floor of an apartment.
The robot is using emitters positioned on the ceiling of the rooms of the apartment to determine its location.
The emitters are equipped with precisely synchronized clocks and their locations are known precisely. 
They are continuously emitting their location and time of emission.

Assume that the robot is situated in a room with $m=3$ non-collinear emitters. Assume also that the robot can receive the signals of all three emitters from any position on the ground of the room. Without loss of generality, we can assume that the ground floor is in the linear space spanned by the standard basis vectors $\ve e_1$ and $\ve e _2$. Then $U=\langle\ve s_3-\ve s_1,\ve s_2-\ve s_1\rangle$ is the linear space perpendicular to $\ve e_3$ and so $U^T=\RR \cdot \ve e_3$.

Since the three emitters are not collinear, there exists a unique circle on the ceiling plane that passes through the three emitters' positions. 
If the robot is positioned directly below the center of that circle, we are in the situation of \cref{exSphere1}. So the three pseudoranges received by the robots are equal, $\ve v$ is the center of the circle and $\ve u=\ve 0$. Then the set of possible solutions for the location of the robot is
\[ \Sol = \{ \ve v+y_1 \ve e_3 | y_1\in \RR \} .\] 
In other words, it
consists of a line perpendicular to the ceiling and passing through both the true position of the robot and the center of the circle. From the pseudoranges alone, it would thus be impossible to determine the height of the robot in the room. But by intersecting the line $\Sol$ with the floor, a unique location can be determined.
Now the set $\Sat$ corresponds to the circle through the three emitters. Therefore, positioning an additional emitter on that circle would not provide any further information regarding the robot position. In particular, if the original three emitters were in the corners of a rectangular  ceiling, an additional emitter positioned on the remaining corner would only provide redundant information. However, more information could be obtained by moving the additional emitter away from the circle. In particular, 
if the robot's position was not constrained to the floor plane, 
then positioning an additional emitter on the ceiling while avoiding the circle through the original three emitters would 
reduce the solution set to two mirror points above and below the ceiling, respectively.

Now, let us consider the situation where the robot on the ground is not situated directly below the center of the circle defined by the emitters. This is illustrated in \cref{fIneq}. Then we are in the situation discussed in \cref{sShape}. In that case, the solution space $\Sol$ for the robot position is contained in the 2D affine plane $\{ v+y_0 u+y_1 e_3 | y_0,y_1\in \RR \}$ and is either an ellipse (2D spheroid), a hyperbola (2D hyperboloid), or a parabola (2D paraboloid). Using the fact that the robot is on the ground, one can intersect the corresponding quadric with the ground plane to better pin point the robot's position. Intersecting the floor plane with a parabola ($e=1$) will yield a single solution; intersecting it with either an ellipse or a hyperbola ($e\neq 1$) will yield two possible solutions. One can use the condition $t_1,t_2,t_3\geq t$ to try to rule out the extraneous solution. Recall that, when $\Sol$ is an ellipse, then the set $\Sat$ is a hyperbola and vice versa. If $\Sat$ is an ellipse, then applying the condition $t_1,t_2,t_3\geq t$ is guaranteed to yield a single possible solution for the robot position. However, when  $\Sat$ is a hyperbola, then as we have proven (see \cref{fIneq}) that the emitters must be on one sheet of that hyperbola. This implies that there will be two solutions for the robot position. 
Thus, the localization problem will have a unique solution on the ground in all cases except when $\Sat$ is a hyperbola.

\begin{figure}
    \centering
\begin{tikzpicture}[scale=1]


\coordinate (A) at (-2,-1.5);
\coordinate (B) at (2,-1.5);
\coordinate (C) at (2,1.5);
\coordinate (D) at (-2,1.5);

\coordinate (A') at (-1.5,-1);
\coordinate (B') at (2.5,-1);
\coordinate (C') at (2.5,2);
\coordinate (D') at (-1.5,2);

\coordinate (DE) at (-1.8,1.6);
\coordinate (CE') at (2.05,1.7);
\coordinate (CE) at (-1.1,2);

\draw (A)--(B)--(C)--(D)--cycle;
\draw (A')--(B')--(C')--(D')--cycle;
\draw (A)--(A'); \draw (B)--(B'); \draw (C)--(C'); \draw (D)--(D');



\fill[blue] (CE) circle (0.04);
\node at ($(CE)-(0,0.25)$) {\small Emitter};
\draw[blue] (CE) circle (0.08);
\draw[blue] (CE) circle (0.12);
\draw[blue] (CE) circle (0.16);

\fill[blue] (DE) circle (0.04);
\node at ($(DE)-(0,0.25)$) {\small Emitter};
\draw[blue] (DE) circle (0.08);
\draw[blue] (DE) circle (0.12);
\draw[blue] (DE) circle (0.16);

\fill[blue] (CE') circle (0.04);
\node at ($(CE')-(0,0.25)$) {\small Emitter};
\draw[blue] (CE') circle (0.08);
\draw[blue] (CE') circle (0.12);
\draw[blue] (CE') circle (0.16);


\coordinate (TopCenter) at ($(D)+0.5*(C')-0.5*(D)$);

\coordinate (BottomCenter) at ($(A)+0.5*(B')-0.5*(A)$);

\draw[dotted, thick] (TopCenter) -- (BottomCenter);



\fill[red] (BottomCenter) ellipse [x radius=0.2, y radius=0.05];
\node at ($(BottomCenter)+(0,-0.15)$) {\footnotesize Robot Vacuum};

        \draw[dotted,red][rotate=8] (TopCenter) ellipse [x radius=2.1, y radius=0.555];

\end{tikzpicture}
    \caption{When locating a robot vacuum on the ground using three ceiling-mounted emitters, there are at most two solutions. When the robot is directly below the center of the circle passing through the emitters, the solution is unique. }
    \label{fig:enter-label}
\end{figure}


\begin{figure}
    \centering
\begin{tikzpicture}[scale=1]
\coordinate (L) at (-3,-2);
\coordinate (R) at (2.5,-2);
\coordinate (L') at (-2,-0.5);
\coordinate (R') at (3.5,-0.5);

\coordinate (DE) at (-1.5,-1); %
\coordinate (CE') at (2.05,1.2); 
\coordinate (CE) at (-1.1,-1.5); 
\coordinate (BOAT) at (0.2,-0.7); 
\coordinate (RAFT) at (1.2,-1.4); 

\draw (L)--(R);
\draw (L')--(R');
\draw (L)--(L'); \draw (R)--(R');

\fill[blue] (BOAT) circle (0.04);
\node at ($(BOAT)-(0,0.25)$) {\small Receiver 3};
\begin{scope}[shift={(BOAT)}, scale=0.09]
\fill[blue]
  (4,0)
  -- (3.2,0.4)
  -- (1.8,0.4)
  -- (1.8,0.8)
-- (1.0,0.8)
-- (1.0,1.0)
-- (0.3,1.0)
-- (0.3,0.8)
  -- (-1.2,0.8)
  -- (-1.2,0.4)
  -- (-2.5,0.4)
  -- (-3.1,0.4)
  -- (-3.6,1.0)
  -- (-3.8,1.0)
  -- (-3.2,0.2)
  -- (-2.6,-0.6)
  -- (2.8,-0.6)
  -- cycle;
\end{scope}

\node at ($(CE)-(0,0.25)$) {\small Receiver 2};
\begin{scope}[shift={(CE)}, scale=0.07]
\fill[blue]
  (4,0)
  -- (3.2,0.4)
  -- (1.8,0.4)
  -- (1.8,0.8)
-- (1.0,0.8)
-- (1.0,1.0)
-- (0.3,1.0)
-- (0.3,0.8)
  -- (-1.2,0.8)
  -- (-1.2,0.4)
  -- (-2.5,0.4)
  -- (-3.1,0.4)
  -- (-3.6,1.0)
  -- (-3.8,1.0)
  -- (-3.2,0.2)
  -- (-2.6,-0.6)
  -- (2.8,-0.6)
  -- cycle;
\end{scope}

\node at ($(DE)-(0,0.25)$) {\small Receiver 1};
  \begin{scope}[shift={(DE)}, scale=0.1]
\fill[blue]
  (4,0)
  -- (3.2,0.4)
  -- (1.8,0.4)
  -- (1.8,0.8)
-- (1.0,0.8)
-- (1.0,1.0)
-- (0.3,1.0)
-- (0.3,0.8)
  -- (-1.2,0.8)
  -- (-1.2,0.4)
  -- (-2.5,0.4)
  -- (-3.1,0.4)
  -- (-3.6,1.0)
  -- (-3.8,1.0)
  -- (-3.2,0.2)
  -- (-2.6,-0.6)
  -- (2.8,-0.6)
  -- cycle;
  \end{scope}

\fill[blue] (CE') circle (0.04); 
\node at ($(CE')-(0,0.25)$) {\small Alternative Receiver 3};
\begin{scope}[shift={(CE')}, scale=0.1]

\filldraw[blue]
  (4.6,0)
  .. controls (4.2,0.4) and (3.6,0.45) .. (3,0.3)
  -- (1.2,0.3)
  -- (0.4,1.2)
  -- (-0.2,1.2)
  -- (0.6,0.3)
  -- (-3.2,0.3)
  -- (-3.6,1.0)
  -- (-3.9,1.0)
  -- (-3.5,0.3)
  -- (-3.0,0.3)
  -- (-3.0,-0.3)
  -- (-4.3,-0.8)
  -- (-4.0,-0.8)
  -- (-2.6,-0.3)
  -- (0.6,-0.3)
  -- (-0.3,-1.3)
  -- (0.2,-1.3)
  -- (1.0,-0.3)
  -- (3,-0.3)
  .. controls (3.6,-0.45) and (4.2,-0.4) .. (4.6,0)
  -- cycle;
\end{scope}

\fill[red] (RAFT) ellipse [x radius=0.05, y radius=0.01];
\draw[red] (RAFT) ellipse [x radius=0.2, y radius=0.05];
\draw[red] (RAFT) ellipse [x radius=0.4, y radius=0.07];
\draw[red] (RAFT) ellipse [x radius=0.6, y radius=0.09];
\draw[red] (RAFT) ellipse [x radius=0.8, y radius=0.11];
\node at ($(RAFT)-(0,0.25)$) {\small Raft};
\end{tikzpicture}
    \caption{Three boats on the sea can accurate determine the two possible positions of a raft emitting signal because the solution quadric of solutions intersect the sea plan perpendicularly. The conditioning of the location problem is significantly worse if one of the receivers is placed high up in the air (e.g., on an airplane) instead of on the sea. }
    \label{fig:enter-label}
\end{figure}


\subsection{Locating a raft on the ocean}
\label{sec:life_boat}
Imagine a life raft drifting on the ocean. In order to be found, the people on the raft are emitting signals. These could be short bursts of sound, light flashes or electromagnetic signals from some device. If the signal is received by three non-collinear receivers, and if the clocks on the receivers are precisely synchronized, then the method discussed in this paper can be used to try to locate the raft. 

Let us explore a first scenario in which there are three boats nearby and each of the boat receives the distress signal.  Let us assume that the ocean surface is flat. In that case, the search space for the raft is a plane (the ocean) and the receivers can be assumed to be situated on the same plane.
While this examples bears a lot of resemblance to the  previous example of the cleaning robot, the fact that the solution space and the receiver plane coincide makes a significant difference: now the quadric of solution passes through the ocean (receiver plane) {\em perpendicularly}. 
The solution(s) being at the intersection point between the solution curve and the receiver plane, this should make the numerical solution robust to noise, as the intersection point should remain more or less the same when small errors are introduced in the time-of-arrival measurements and the boats move up and down with the waves.

This is illustrated in \cref{fig:3boats} where we investigate the situation where three boats within 1-2 kilometers of each other on the ocean plane receive the signal from a nearby raft on that same plane. For that experiment, we set the boat positions at $\ve s_1=(0,0,0)$, $\ve s_2=(1,1,0)$ and $\ve s_3=(0,1,0)$ respectively, and we placed the raft at position $\ve x =(0.75, 5, 0)$. Setting the clock bias at $b=0$, we then computed the times-of-arrival $t_i=\| \ve s_i - \ve x \|$ for each boat, and perturbed each of them independently with additive Gaussian noise with $\sigma=0.01$ in order to obtained noisy times-of-arrival $\tilde{t}_i$. In order to visualize the numerical behavior of the solution, we considered every possible location on a fine grid within a search rectangle on the ocean plane. For each location $\ve p$, we obtained the corresponding distances $\| \ve s_i - \ve p \|$ to the boats, and we used those to obtain an estimate $b_{\ve p}$ the clock bias $b$. More specifically we set $b_{\ve p}=\frac{1}{3}\sum_{i=1}^3(\| \ve s_i - \ve p \|-\tilde{t}_i)$ and $t_{i,\ve p}=\| \ve s_i - \ve p \|+b_{\ve p}$. The surface shown in the figure represents the value $E$ of the mean-squared-error of the reconstructed times-of-arrival, namely \[ E=\sum_{i=1}^3 (t_{i,\ve p}- \tilde{t}_i)^2.\] As one can see from the plot, there is a clearly defined (very ``pointy") global minimum on the grid of the search area. Let us take the grid point corresponding to the minimum value to be the approximate solution to the localization problem. \cref{tab:error_ocean} summarizes the accuracy of the results (over $10,000$ trials)of this simple solution method. B Average results for both the aforementioned case of $\sigma=0.01$ as well as for the case where $\sigma=0.1$ are listed. As one would expect,  when the noise is smaller, the solution obtained is more accurate, on average, and the error tends to vary less.

In \cref{fig:2boats_1plane}, we contrast this situation with that where one of the receiver is on an aircraft, rather than on a boat. For that experiment, we set the boat positions at $\ve s_1=(0,0,0)$, $\ve s_2=(1,1,0)$, the aircraft position at $\ve s_3=(0,1,10)$, and we kept the raft at the same position $\ve x =(0.75, 5, 0)$. Again, we set the clock bias at $b=0$, computed the times-of-arrival $t_i=\| \ve s_i - \ve x \|$ for each boat, and perturbed each of them independently with additive Gaussian noise with $\sigma=0.01$. The cost function $E$, computed as in the previous case, is shown in \cref{fig:2boats_1plane}. Notice how the local minimum in this case is less ``pointy" than when the three receivers are on the ocean plane. One can see that it lies within a flat valley within the search space. By setting the solution at the global minimum of the cost function $E$, we can again obtain an approximate solution to the localization problem. The experimental average and standard deviation of the error of this solution (over $10,000$ trials) are shown in \cref{tab:error_ocean}. As one can see, the solution obtained is less accurate, on average, and tends to vary more when the third receiver is located far up in the sky rather than on the ocean plane. This illustrates the numerical advantage of setting the receivers approximately on the same plane as the solution space.

\begin{table}[h]
    \centering
    \begin{tabular}{l|llll}
          &  $1\%$ noise &  &  $10\%$  noise & \\
          \hline
          & mean & standard deviation & mean &standard deviation\\
         \hline 
         Three boats &  0.31m & 1.39m  & 16.84m  & 12.55m \\
       Two boats and one aircraft &  12.68m & 11.40m  & 26.02m  & 17.21m \\
    \end{tabular}
    \caption{Position error for the case of receivers on three boats (solution quadric perpendicular to search space) and the case of receivers on two boats and one aircraft (solution quadric not perpendicular to search space). Here the solution is obtained by taking the grid point on the portion of the plane being searched where the cost function is the smallest. We see that the solution is more accurate and has lower standard deviation when the solution quadric is perpendicular to the search space.}
    \label{tab:error_ocean}
\end{table}



\begin{figure}[h]
    \centering
    \includegraphics[width=0.45\linewidth]{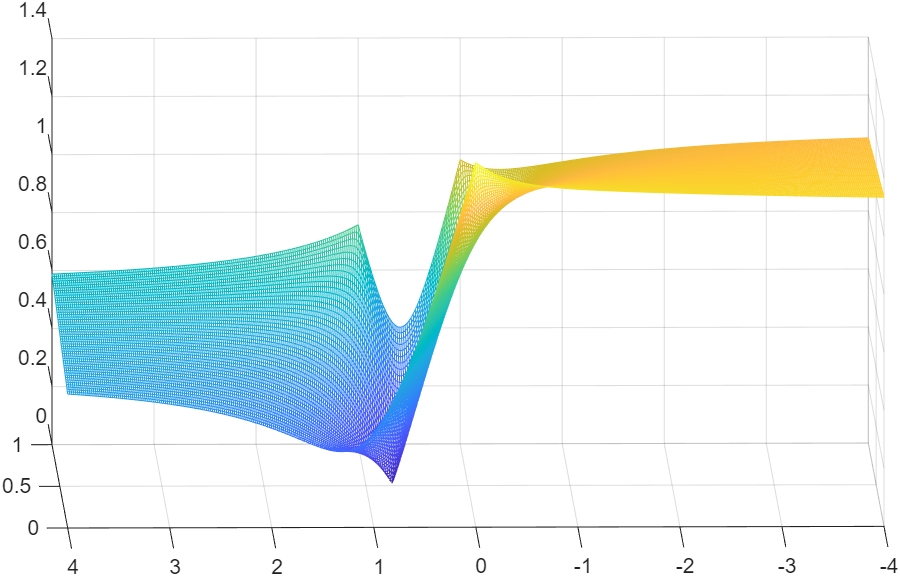}
    \includegraphics[width=0.45\linewidth]{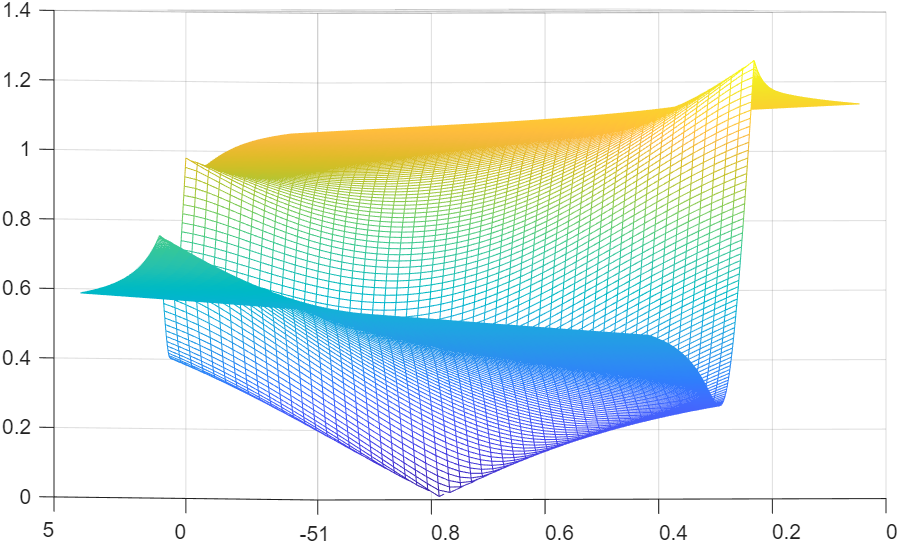}
    \caption{{\bf Locating a life raft emitting a signal received by three boats:} Two views of the mean-squared-error for all locations  on grid inside search rectangle on ocean plane, with $1\%$ noise on the signal times-of-arrival. The boats are around $1$-$1.5$ km from each other and the life raft is within $1$ km from each boat. The perpendicularity of the solution quadric and the search plane create a well-defined global minimum within the search area, thus enabling precise location of the raft.}
    \label{fig:3boats}
\end{figure}


\begin{figure}[h]
    \centering
    \includegraphics[width=0.45\linewidth]{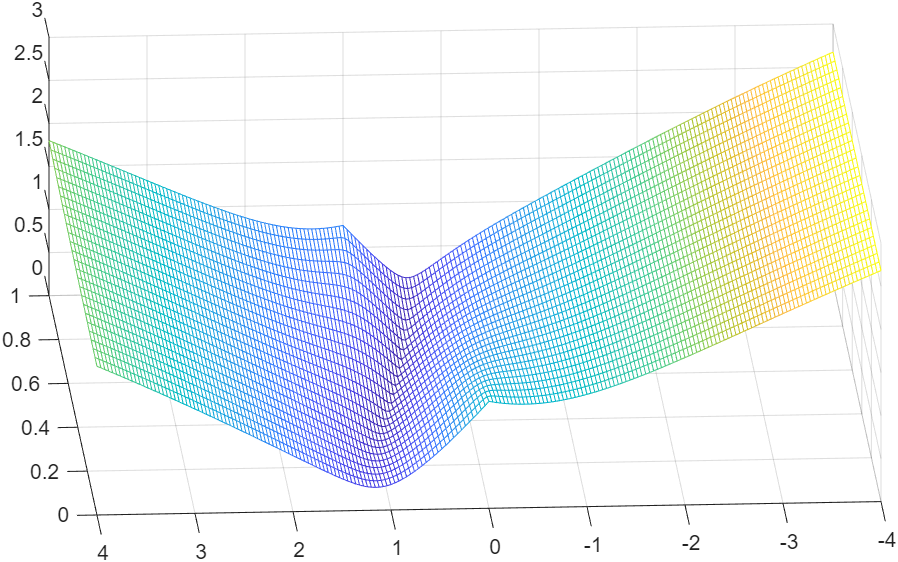}
    \includegraphics[width=0.45\linewidth]{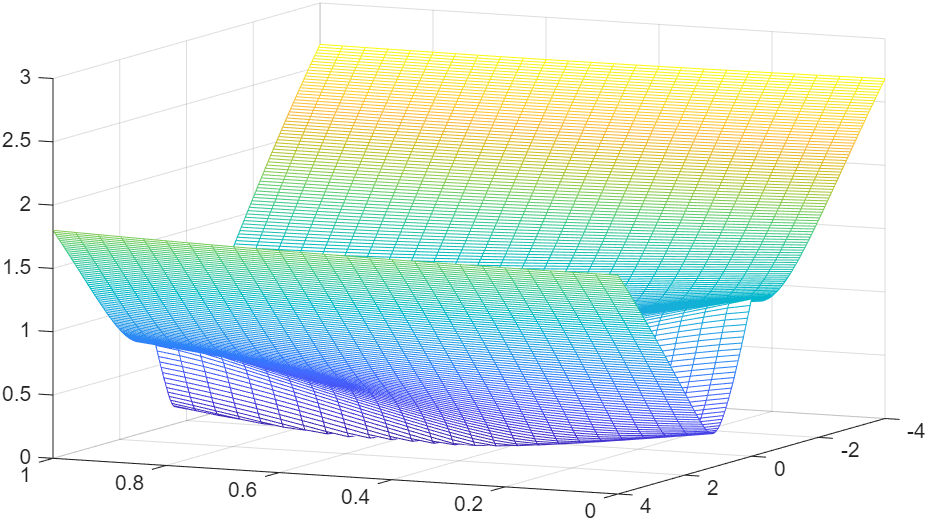}
    \caption{{\bf Locating a life raft emitting a signal received by two boats and an aircraft:} Two views of the mean-squared-error for all locations on grid inside search rectangle, with $1\%$ noise on the signal times-of-arrival. The two boats are about $1.5$ km from each other, the life raft is within $1$ km from each boat, and the aircraft is flying $10$ km high in the sky above. Now the solution quadric is not perpendicular to the search space, and the global minimum within the search area is in a long narrow valley, making it harder to locate the raft.}
    \label{fig:2boats_1plane}
\end{figure}

\end{document}